\definecolor{orange}{rgb}{1,0.5,0}
\DeclareMathAlphabet{\mathpzc}{OT1}{pzc}{L}{it} 
\theoremstyle{definition}
\newtheorem{definition}{Definition}[section]
\newtheorem{theorem}[definition]{Theorem}
\newtheorem{proposition}[definition]{Proposition}
\newtheorem{corollary}[definition]{Corollary}
\newtheorem{lemma}[definition]{Lemma}
\newtheorem{remark}[definition]{Remark}
\def\Im{\mathrm{Im\,}}
\def\geq{\geqslant}
\def\leq{\leqslant}
\def\R{\mathbb{R}}
\def\Z{\mathbb{Z}}
\def\N{\mathbb{N}}
\def\id{\mathrm{id}}
\def\epsilon{\varepsilon}
\def\mf{\mathfrak}
\def\Lie{\operatorname{Lie}}
\def\ad{\operatorname{ad}}
\def\Ad{\operatorname{Ad}}
\def\diag{\operatorname{diag}}
\def\inj{\operatorname{inj}}
\def\Bow{\operatorname{Bow}}
\def\ker{\operatorname{ker}}
\def\vspan{\operatorname{span}}
\def\Sym{\operatorname{Sym}}
\def\Leb{\operatorname{Leb}}
\newcommand{\bea}{\begin{eqnarray}}
  \newcommand{\eea}{\end{eqnarray}}
  \newcommand{\beab}{\begin{eqnarray*}}
  \newcommand{\eeab}{\end{eqnarray*}}
  \newcommand{\be}{\begin{equation}}
  \newcommand{\ee}{\end{equation}}
\newcommand{\set}[1]{\left\lbrace #1 \right\rbrace}
\newcommand{\abs}[1]{\left| #1 \right|}
\newcommand{\mc}{\mathcal}
\newcommand{\norm}[1]{\abs{\abs{#1}}}
\newcommand{\of}{\circ}
\newcommand{\mbf}{\mathbf}
\newcommand{\ve}{\epsilon}
\title{Slow entropy of higher rank abelian unipotent actions}
\author{Adam Kanigowski\footnote{Department of Mathematics, University of Maryland, College Park, MD 20742, USA, E-mail: adkanigowski@gmail.com A. K. was partially supported by the NSF grant DMS-1956310}, Philipp Kunde\footnote{Department of Mathematics, The Pennsylvania State University, University Park, PA 16802, USA, E-mail: pkunde.math@gmail.com. P.K. acknowledges financial support from a DFG Forschungsstipendium under Grant No. 405305501.}, Kurt Vinhage\footnote{Department of Mathematics, The Pennsylvania State University, University Park, PA 16802, USA, E-mail: kwv104@psu.edu}, Daren Wei\footnote{Department of Mathematics, The Pennsylvania State University, University Park, PA 16802, USA, E-mail: duw170@psu.edu  D. W. was partially supported by the NSF grant DMS-16-02409}}
\begin{document}
\maketitle
\begin{abstract}
We study slow entropy invariants for abelian unipotent actions $U$ on any finite volume homogeneous space $G/\Gamma$. For every such action we show that the topological slow entropy can be computed directly from the dimension of a special decomposition of $\Lie(G)$ induced by $\Lie(U)$. Moreover, we are able to show that the metric slow entropy of the action coincides with its topological slow entropy. As a corollary, we obtain that the complexity of any abelian horocyclic action is only related to the dimension of $G$. This generalizes the rank one results from \cite{KanigowskiVinhageWei} to higher rank abelian actions.
\end{abstract}
\section{Introduction}

Since its introduction, metric and topological entropies have played a central role in structural questions for dynamical systems and have been a crucial tool in detecting chaoticity of a system. Hallmark results include the Ornstein isomorphism theorem, Sinai's theorem providing a maximal entropy Bernoulli factor, and its surprising ability to detect smooth structures as shown in the Katok entropy conjecture. The relation between the metric and topological entropy is encaptured in the {\em variational principle}. The subject of this paper is a confluence of two of its generalizations: entropy for abelian group actions and slow entropy.

While the standard definition of entropy has a straightforward generalization to group actions of amenable groups, and this definition has several useful applications for symbolic systems, it has a critical failure when applying it to group actions by diffeomorphisms: it is always zero. Several adaptations of the definition are available which are functional for {\it hyperbolic} actions of abelian groups, such as the Fried average entropy studied in \cite{KatokKatokRH}.

Another such adaptation (also studied in \cite{KatokKatokRH} or \cite{Hochman}) is the {\it slow entropy} of a group action, which, rather than counting the {\it exponential} growth rate of distinguishable orbit types, counts the growth rate at a scale that can be chosen for the considered dynamics. Slow entropy was first introduced in \cite{Kat-Tho} as an isomorphism invariant for actions of amenable groups, with introducing an invariant for smooth, hyperbolic abelian group actions as one of its main goals.

However, it was discovered to have another application, that of detecting subexponential growth rates for flows and transformations (actions of $\R$ and $\Z$). When working with slow entropy it is first important to determine the scale which describes the growth rates and also distinguishes different systems. For the usual definition of entropy, the number of orbits grows at exponential rate $e^{ht}$ with time $t$. When this rate is too fast, one must use a more slowly-growing family depending on a parameter $h$. For instance, for smooth flows on surfaces, the growth rates are $t^h$ and $t(\log t)^h$, depending on whether the singularities of the flow are degenerated or not, respectively \cite{Kanigowski}. For unipotent homogeneous $\R$ actions, the correct family of scales to choose is the polynomial one,  $t^h$ \cite{KanigowskiVinhageWei}. We refer to the survey paper \cite{KanigowskiKatokWei} for a more systematical review of history and results related to slow entropy.

The dynamical systems considered in this paper are abelian groups acting by translation by unipotent elements on finite volume homogeneous spaces. These are natural generalizations of unipotent flows to the higher rank setting. More precisely, suppose $G$ is a connected Lie group with Lie algebra $\mathfrak{g}$ aand $\Gamma\subset G$ is a lattice, we say $\mathfrak{u}\subset\mathfrak{g}$ is an abelian $\ad$-unipotent subalgebra if $\mathfrak{u}$ is abelian, $\dim\mathfrak{u}\geq 1$ and for every $W\in\mathfrak{u}$, $\ad_W$ is a nilpotent element in $\mathfrak{gl}(\mathfrak{g})$. We obtain the precise value of both topological and metric slow entropy in polynomial scale of the translation action by $\exp(\mathfrak{u})$ on $G/\Gamma$ induced by an abelian $\ad$-unipotent subalgebra $\mathfrak{u}$ of $\mathfrak{g}$. To state our main result, we need some basic definitions. We define $\tilde{\mathfrak{g}}_0=\mf g_0 = Z_{\mf g}(\mf u)$, the centralizer of $\mf u$ in $\mf g$. Inductively, we set:
\begin{equation}\label{eq:basg} \tilde{\mf g}_i = \set{ X \in \mf g : \ad_{U_1}\ad_{U_2}\dots \ad_{U_{i+1}}(X) = 0 \mbox{ for all } U_1,\dots,U_{i+1} \in \mf u},
\end{equation}
and choose some $\mf g_i \subset \tilde{\mf g}_i$ complementary to $\tilde{\mf g}_{i-1}$. If $\norm{\cdot}$ is any norm on $\R^k$, define $F_n = \set{ v \in \R^k : \norm{v} \le n}$. Then $F_n$ is a F{\o}lner sequence, and we call $F_n$ a {\it norm-induced sequence}. Our main result is:
\begin{theorem}
\label{thm:main}
If $\mathfrak{u}$ is an abelian $\ad$-unipotent subalgebra of $\mathfrak{g}$, then the metric and topological polynomial slow entropy of a translation action by $U = \exp(\mf u)$ on $G/\Gamma$  with respect to any norm-induced F{\o}lner sequence is given by:
\[ h_{\mathfrak{u}} = \frac{1}{\dim \mf u}\sum_{i=0}^m \dim(\mf g_i)\cdot i, \]
where $m$ is some positive integer only related to $\mathfrak{g}$ and $\mathfrak{u}$.
\end{theorem}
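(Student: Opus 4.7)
The plan is to count the number $S(F_n, \epsilon)$ of $(F_n,\epsilon)$-Bowen balls needed either to cover $G/\Gamma$ (for the topological side) or to cover a set of measure $\ge 1-\delta$ (for the metric side), and to show the matching two-sided bound $S(F_n,\epsilon) \asymp n^{\sum_i i\dim \mathfrak{g}_i}$ up to a multiplicative constant depending on $\epsilon$. Since metric slow entropy is always $\leq$ topological slow entropy, the theorem follows once we prove the upper bound on $S_{\mathrm{top}}$ and the lower bound on $S_{\mathrm{met}}$. Because $|F_n|\asymp n^{\dim \mathfrak{u}}$, dividing $\log S$ by $\log|F_n|$ will produce the factor $\frac{1}{\dim\mathfrak{u}}$ in the formula.

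The central ingredient is the \emph{adjoint polynomial estimate}. Every $v\in\mathfrak{u}$ is $\ad$-nilpotent, so $\Ad_{\exp(v)}=e^{\ad_v}$ is polynomial in $v$, and the definition of $\tilde{\mathfrak{g}}_i$ yields the flag condition $(\ad_v)^{j}(\tilde{\mathfrak{g}}_i)\subset \tilde{\mathfrak{g}}_{i-j}$. Consequently, for $Y=\sum_i Y_i\in\bigoplus_i\mathfrak{g}_i$ and $v\in F_n$ one has the componentwise bound $\|\Ad_{\exp(v)}Y_i\|\le C\sum_{j\le i} n^j\|Y_i\|$. Working in exponential coordinates, $d_{F_n}\bigl(x,\exp(Y)\cdot x\bigr)$ is comparable to $\sup_{v\in F_n}\|\Ad_{\exp(v)}Y\|$ provided $\|Y\|$ is below the injectivity radius, which is ensured by taking $\epsilon$ small.

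\textbf{Upper bound.} The polynomial estimate shows that the product box $\{Y:\|Y_i\|\le c\epsilon/n^i \text{ for all } i\}$ lies inside $B(x,F_n,\epsilon)$. Tiling a fundamental domain for $\Gamma$ by translates of such boxes covers $G/\Gamma$ by $\asymp \epsilon^{-\dim\mathfrak{g}}\prod_i n^{i\dim\mathfrak{g}_i}$ Bowen balls, giving the required upper bound on $S_{\mathrm{top}}$.

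\textbf{Lower bound.} By Haar invariance, it suffices to bound the measure of a single Bowen ball uniformly. We show by downward induction on $i$ that any $Y$ in the Bowen ball satisfies $\|Y_i\|\le C\epsilon/n^i$. The crucial point is that $\mathfrak{g}_i\oplus\tilde{\mathfrak{g}}_{i-1}=\tilde{\mathfrak{g}}_i$, so for every $0\neq X\in \mathfrak{g}_i$ there exist $U_1,\ldots,U_i\in \mathfrak{u}$ with $\ad_{U_1}\cdots\ad_{U_i}X\neq 0$. Evaluating $\Ad_{\exp(v)}Y$ at $v=n\ell$ for an appropriate $\ell\in\mathfrak{u}$ and projecting to $\tilde{\mathfrak{g}}_{i-j}/\tilde{\mathfrak{g}}_{i-j-1}$ isolates a term of order $n^i\|Y_i\|$ modulo lower-order contributions from the $Y_k$ with $k>i$ (already controlled by the inductive hypothesis). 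Once the box inclusion is established, the Bowen ball has Haar measure $\lesssim \epsilon^{\dim\mathfrak{g}}/n^{\sum_i i\dim\mathfrak{g}_i}$, so pigeonholing against $\mu(G/\Gamma)$ yields $S_{\mathrm{met}}(F_n,\epsilon,\delta)\gtrsim n^{\sum_i i\dim\mathfrak{g}_i}$.

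The main obstacle is the inductive step in the lower bound: one must choose the extremal direction $v\in F_n$ so that the top-order term $(\ad_v)^i Y_i$ genuinely dominates all cross-terms $(\ad_v)^{i+k}Y_{i+k}$ from higher layers \emph{uniformly} in $Y_i$ on the unit sphere of $\mathfrak{g}_i$. This uniformity rests on a finite-dimensional non-degeneracy argument applied to the multilinear maps $\mathfrak{u}^{\otimes j}\otimes \mathfrak{g}_i\to \tilde{\mathfrak{g}}_{i-j}$, and all constants must be absorbed into $o(\log n)$ error so as not to affect the final limit.
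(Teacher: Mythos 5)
Your outline of the Bowen-ball geometry (the generalized chain decomposition $\mathfrak{g}=\bigoplus_i\mathfrak{g}_i$, the polynomial bound on $\Ad(\exp(v))$, the downward induction that pins down $\|Y_i\|\lesssim\epsilon/n^i$) is essentially the same machinery the paper builds in Section~4, and with the care about linear independence that you acknowledge in your last paragraph (the paper formalizes this via the surjectivity of $\Phi_i:\Sym^i(\mathfrak{u})\otimes\mathfrak{g}_0^*\to\mathfrak{g}_i^*$ and a bootstrapping in Corollary~\ref{cor:polynomial-decay}), that part would give the two-sided bound $\mu(\Bow(x,R,\eta))\asymp R^{-h}$ and hence the \emph{topological} result.

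However, there is a genuine gap in the metric half. You define $S_{\mathrm{met}}(F_n,\epsilon,\delta)$ as the number of $(F_n,\epsilon)$-\emph{Bowen} balls needed to cover a set of measure $1-\delta$, and derive the lower bound by pigeonholing against the Bowen-ball measure estimate. But the metric polynomial slow entropy is defined (Definition~\ref{def:metricSlow}) through \emph{Hamming} balls $B_H^{F_n,\mathcal{P}}$ with respect to a finite partition $\mathcal{P}$, not Bowen balls. A Hamming ball is in general much larger than the corresponding Bowen ball: two orbits may agree symbolically on $99\%$ of $F_n$ and yet be metrically far apart for the remaining $1\%$. Thus a lower bound on the number of Bowen balls covering $1-\delta$ of the mass only re-derives the topological lower bound; combined with Goodwyn's inequality $h_\mu\le h_{\operatorname{top}}$ it yields $h_\mu\le h_{\mathfrak{u}}$, which is the wrong direction. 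To obtain $h_\mu\ge h_{\mathfrak{u}}$ one must show that Hamming balls are not much larger than Bowen balls, i.e. that symbolic $\epsilon$-closeness on $F_n$ forces actual Bowen $2\eta$-closeness. This is precisely the content of Lemma~\ref{lem:connectionOfHammingAndBowen} (and its noncompact version Lemma~\ref{lem:connectionOfHammingAndBowenNonCompact}), whose proof is the paper's main new technical contribution: it uses the Brudnyi--Ganzburg inequality (Theorem~\ref{thm:BrudnyiGanzburg}) to say that a polynomial cannot be small on most of a cube unless it is small on the whole cube, and the Besicovitch covering theorem (Theorem~\ref{thm:BesicovitchCovering}) to glue these local estimates across $F_n$ in the absence of the ``first escape time'' ordering available in rank one. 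Your proposal never addresses this Hamming-to-Bowen comparison, and without it the metric lower bound is not established. A secondary omission is the noncompact case: one also needs the pointwise ergodic theorem to control excursions outside a large compact set, as in Section~\ref{sec:mainMetricSlowNoncompact}.
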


\begin{remark}
It is interesting to ask whether the polynomial slow entropy is always independent of the F{\o}lner sequence. For instance, for an $\R^2$-action, one may consider the sequence $[-n,n] \times [-n^2,n^2]$.
\end{remark}
\begin{remark}
It is also worth to point out that the polynomial slow entropy is independent of the norm that we use to define the F{\o}lner sets, see Proposition  \ref{prop:EquivalenceOfNormFolner} for a more detailed discussion. From now on, we will keep using the maximum norm on $\mathbb{R}^k$, i.e. $\|v\|=\max_{1\leq i\leq k}|v_i|$ for $v=(v_1,\ldots,v_k)\in\mathbb{R}^k$.
\end{remark}

Our results are a generalization of the results  for flows in \cite{KanigowskiVinhageWei}, but several techniques used for flows \emph{cannot} be adapted to this setting. The principal method in \cite{KanigowskiVinhageWei} was to consider the generator of the flow, $U \in \mf g$, and its adjoint action $\ad_U : \mf g \to \mf g$. This is a linear transformation and can therefore be put in {\it Jordan normal form}. Equivalently, since $\ad_U$ is nilpotent and therefore every eigenvalue is 0, one may choose a basis $\{v_1,\dots,v_n\} \subset \mf g$ such that $\ad_U(v_i) = v_{i+1}$ or $\ad_U(v_i) = 0$ for every $i$. Unfortunately, no such structure exists for commuting nilpotent transformations, there are several examples of unipotent group actions for which there is no common basis which puts every element in Jordan normal form\footnote{A very straight forward example is the following. Let $J_1=${\tiny$\left(
                                         \begin{array}{ccc}
                                           0  & 1 &  0 \\
                                           0 & 0 & 1 \\
                                           0 & 0 & 0 \\
                                         \end{array}
                                       \right)
$}. Then $J_1$ and $J_1^2$ are commuting nilpotent matrices but cannot be brought into Jordan normal form simultaneously.}. It is a very interesting problem to find a normal form for commuting nilpotent transformations.

Our solution to the problem is to construct a basis which shares several properties with the basis described above, but may be much weaker. This construction is carried out in Section \ref{sec:polynomialControl}. The difficulty of the construction is to ensure that divergence rates can still be obtained, and that the polynomials which dictate the divergence of orbits are not redundant. We also require a new argument when proving Lemma \ref{lem:connectionOfHammingAndBowen}, that the decay rates of Hamming and Bowen balls coincide (i.e., that the topological and metric slow entropies coincide). The argument in \cite[Lemma 3.6]{KanigowskiVinhageWei} \emph{cannot} be adapted directly because it heavily relies on the order in $\R$: the proof involves choosing the first time that orbits diverge. This style of argument usually does not carry over to the setting of higher rank abelian groups, since the ``first time'' that a condition is met is less precise and usually less useful. Here, we replace that argument with one which still prominently features the (higher dimensional) Brudnyi-Ganzburg inequality (Theorem \ref{thm:BrudnyiGanzburg}), but now crucially uses the Besicovitch Covering Theorem (Theorem \ref{thm:BesicovitchCovering}). It is interesting to note that our argument here can be used to replace the one in \cite{KanigowskiVinhageWei} and gives a more precise relation between Hamming and Bowen balls.

As an application of our results, we obtain a simple formula of the complexity of \emph{abelian horocyclic actions} on finite volume homogeneous spaces of semisimple Lie groups. We assume that $\mf g$ is semisimple and we say that $\mf u \subset \mf g$ is a {\it horocyclic} subalgebra if there exists some element $X \in \mf g$ such that $\mf u$ is the sum of all generalized eigenspaces of $X$ whose corresponding eigenvalue has positive real part. Recall that every semisimple Lie algebra is a direct sum of simple algebras, $\mf g = \bigoplus_i \mf g^i$. For each simple factor $\mathfrak{g}^i$ of $\mathfrak{g}$, let $\pi^i: \mathfrak{g} \to \mathfrak{g}^i$ denote the projection. Say that $\mathfrak{g}^i$ is {\it detected} by $\mathfrak{u}$ if $\pi^i(\mathfrak{u}) \neq\{0\}$, and let $D$ denote the set of indices $i$ for which $\mathfrak{g}^i$ is detected. Then we prove:
\begin{theorem}
\label{thm:full-unstable}
If $\mf u$ is an abelian horocyclic subalgebra of $\mf g$, then the metric and topological polynomial slow entropy of the translation action by $U = \exp(\mf u)$ on $G/\Gamma$ (not necessarily compact) with respect to any norm-induced sequence is
$$\frac{1}{\dim\mathfrak{u}}\sum_{i \in D} \dim \mathfrak{g}^i.$$
Moreover, if $\mathfrak{g}$ is simple, then we have its polynomial slow entropy is
$\dfrac{\dim \mf g}{\dim \mf u}.$
\end{theorem}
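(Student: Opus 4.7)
The plan is to apply Theorem~\ref{thm:main} and reduce the computation of $\sum_i i\cdot\dim\mf g_i$ to a direct analysis on each simple factor of $\mf g$. First I would decompose $\mf g=\bigoplus_j \mf g^j$ into simple ideals and write $X=\sum_j X^j$, so that $\mf u=\bigoplus_j \mf u^j$ where $\mf u^j$ is either zero (when $j\notin D$) or an abelian horocyclic subalgebra of $\mf g^j$. Because $[\mf g^j,\mf g^{j'}]=0$ for $j\neq j'$, the filtration $\{\tilde{\mf g}_k\}$ from \eqref{eq:basg} splits as a direct sum of the analogous filtrations on each simple factor, and the factors with $j\notin D$ all lie in $\tilde{\mf g}_0$ and hence contribute nothing to $\sum_k k\cdot\dim\mf g_k$. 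It therefore suffices to prove $\sum_k k\cdot\dim\mf g_k=\dim\mf g$ whenever $\mf g$ is simple and $\mf u$ is a nonzero abelian horocyclic subalgebra.

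For that simple case, the key structural input I would invoke is classical: an abelian horocyclic subalgebra of a simple Lie algebra is the nilradical of a parabolic attached to a cominuscule simple root, so the $\ad_X$-grading has depth one. After rescaling $X$ one obtains $\mf g=\mf u^-\oplus\mf l\oplus\mf u$ with eigenvalues in $\{-1,0,1\}$, and $[\mf u,\mf u]=0$, $[\mf u,\mf l]\subset\mf u$, $[\mf u,\mf u^-]\subset\mf l$. The Killing form $B$ pairs $\mf u$ with $\mf u^-$ non-degenerately (so $\dim\mf u=\dim\mf u^-$) and restricts non-degenerately to $\mf l$. Simplicity forces $[\mf u,\mf u^-]=\mf l$ and $[\mf u,\mf l]=\mf u$, since otherwise $[\mf u,\mf u^-]+\mf u+\mf u^-$ would be a proper nonzero ideal of $\mf g$; in particular $[\mf u,\mf g]=\mf l\oplus\mf u$.

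With these facts in hand the filtration can be read off directly. If $Y$ centralizes $\mf u$, then for every $U\in\mf u$ and $Z\in\mf g$ we have $B(Y,[U,Z])=B([U,Y],Z)=0$, so $Y$ is $B$-orthogonal to $\mf l\oplus\mf u$; the duality $\mf u\leftrightarrow\mf u^-$ and the non-degeneracy of $B|_\mf l$ then force $Y\in\mf u$, giving $\tilde{\mf g}_0=\mf u$. For $Y\in\mf l\oplus\mf u$, $[\mf u,Y]\subset\mf u$ and hence $[\mf u,[\mf u,Y]]\subset[\mf u,\mf u]=0$, so $\mf l\oplus\mf u\subset\tilde{\mf g}_1$. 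Conversely, for a nonzero $Y\in\mf u^-$ the identity $Z_\mf g(\mf u)=\mf u$ forces $[\mf u,Y]\neq 0$; since its nonzero image lies in $\mf l$ while $Z_\mf g(\mf u)\cap\mf l=0$, a second bracket with $\mf u$ is again nonzero, showing $[\mf u,[\mf u,Y]]\neq 0$. Hence $\tilde{\mf g}_1=\mf l\oplus\mf u$, and a further bracket with $\mf u$ lands in $[\mf u,\mf u]=0$, so $\tilde{\mf g}_2=\mf g$. This yields $\dim\mf g_0=\dim\mf u$, $\dim\mf g_1=\dim\mf l$, $\dim\mf g_2=\dim\mf u^-$, and
\[
\sum_i i\cdot\dim\mf g_i \;=\; \dim\mf l+2\dim\mf u^- \;=\; \dim\mf l+\dim\mf u+\dim\mf u^- \;=\; \dim\mf g,
\]
using $\dim\mf u=\dim\mf u^-$. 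Combined with Theorem~\ref{thm:main} this gives the stated formula. The main obstacle will be justifying the depth-one reduction in the simple case, i.e.\ the classification of abelian nilradicals; once that classical input is granted, the remainder is clean Killing-form bookkeeping.
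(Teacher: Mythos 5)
Your proposal is correct and follows the same high-level architecture as the paper's proof---decompose into simple ideals, ignore the undetected factors, and show that in the simple case the filtration from \eqref{eq:basg} is exactly $\mf g_0=\mf g_X^+$, $\mf g_1=\mf g_X^0$, $\mf g_2=\mf g_X^-$---but your handling of the filtration is genuinely different. Where you diverge: to establish $Z_\mf g(\mf u)=\mf u$, the paper (Lemma \ref{lem:full-unstable}) constructs the ideal $\mf z=\mf z_0\oplus[\mf g_X^-,\mf z_0]$ and appeals to simplicity; you instead use Killing-form orthogonality, noting that $Y\in Z_\mf g(\mf u)$ forces $B(Y,[\mf u,\mf g])=0$ and then reading off $Y\in\mf u$ from $[\mf u,\mf g]=\mf l\oplus\mf u$ together with the pairing $\mf u\leftrightarrow\mf u^-$ and non-degeneracy of $B|_{\mf l}$. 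That is a cleaner argument and buys a more conceptual explanation of why the centralizer is no bigger than $\mf u$. Two small notes on it: the equality $[\mf u,\mf l]=\mf u$ does not come from simplicity but simply from $X\in\mf l$ with $\ad_X|_{\mf u}=\id$ (so $\mf u=[X,\mf u]\subset[\mf l,\mf u]$); and your identification of $\tilde{\mf g}_1$ and $\tilde{\mf g}_2$ correctly uses that the filtration is $\ad_X$-graded so that nonzero $\mf u^-$-components are detected.

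The one place where your proposal is materially thinner than the paper is exactly the one you flag: the ``depth-one'' reduction. You cite as classical that an abelian horocyclic subalgebra of a simple Lie algebra is the nilradical of a parabolic attached to a cominuscule simple root, giving eigenvalues in $\{-1,0,1\}$ after rescaling. That fact \emph{is} the content of the paper's Lemma \ref{lem:conformal}, which proves it from scratch: it reduces $X$ to an $\R$-semisimple element in an $\R$-split Cartan $\mf a$, uses a nearby regular element to fix a system of simple roots with $\alpha_i(X)\ge 0$, and then argues via the highest root and the sum-of-simple-roots expansion that at most one simple root can be positive on $X$ when $\mf u$ is abelian. If you are allowed to quote the cominuscule classification, your argument closes; if not, you would need to supply this step, and the paper's root-combinatorial proof is essentially the minimal self-contained route. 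Otherwise the bookkeeping $\sum_i i\cdot\dim\mf g_i=\dim\mf l+2\dim\mf u^-=\dim\mf g$ and the reduction to simple factors are correct and match the paper.
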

Moreover, we also establish some properties of the slow entropy for abelian actions in a general setting. We provide a weighted average formula (Proposition \ref{prop:weightAverage}) to explain the relation of polynomial slow entropies of abelian actions and polynomial slow entropies of their generators. We also show that for an $\mathbb{R}^k$ action the slow entropy in the metric or topological case is zero for all scaling functions if and only if the action is conjugate in the respective category to an $\mathbb{R}^k$ action by translations on a compact abelian group (Proposition \ref{prop:metricVanishing} and Proposition \ref{prop:topologicalVanishing}). This generalizes the characterization of total vanishing of slow entropy from rank one case\footnote{The characterization in measurable category was proven by Ferenczi in \cite{Ferenczi} and in topological category by Kanigowski, Vinhage and Wei in \cite{KanigowskiVinhageWei}.} to the setting of higher rank abelian actions.

\subsection*{Plan of the paper}
In Section \ref{sec:topoSlowDef}--\ref{sec:metricSlowDef}, we provide definitions of topological slow entropy and metric slow entropy. Further basic notations and preliminaries for homogeneous dynamics, adjoint representations, uniformities and entropy, and the independence of polynomial slow entropy from norms are presented in Section \ref{sec:preliminaries}. In Section \ref{sec:mainTopologicalCase}, we build a so-called {\em generalized chain basis} (Definition \ref{def:generalizedChainBasis}) of $\mf g$ and give the proofs of several lemmas controlling the coefficients of associated polynomials. As a result, we obtain the precise value of the topological slow entropy of abelian unipotent actions on finite volume homogeneous spaces. Using the higher dimensional Brudnyi-Ganzburg inequality and Besicovitch covering theorem we show in Section \ref{sec:mainMetricSlow} that in compact homogeneous spaces two points being Hamming close implies that they are Bowen close. This yields the metric slow entropy in this setting. Afterwards, we extend the results of Section \ref{sec:mainMetricSlow} to finite volume homogeneous spaces and, hereby, we obtain in Section \ref{sec:mainMetricSlowNoncompact} the precise value of metric slow entropy also in the non-compact setting. In Section \ref{sec:horocyclicActions} we analyze and utilize the structure of semisimple Lie groups to prove Theorem \ref{thm:full-unstable}. In Section \ref{sec:coherenceExamples}, we first show the coherence of Theorem \ref{thm:main} with the slow entropy formula for unipotent flows in \cite{KanigowskiVinhageWei} and then we provide several examples to demonstrate how to calculate the exact value of slow entropy for abelian unipotent actions in different settings. Finally, we investigate general properties of slow entropy for higher rank abelian actions in the appendix. On the one hand, we prove a weighted average formula for product actions in Appendix \ref{sec:ProductRankOne}. On the other hand, in Appendix \ref{sec:entropyVanishingCriterion} we generalize the characterizations of total vanishing of slow entropy in \cite{Ferenczi} and \cite{KanigowskiVinhageWei} to higher rank abelian actions.
\paragraph{Acknowledgements:} The authors are grateful for Svetlana Katok's warm support and careful advice.

\subsection{Topological slow entropy}\label{sec:topoSlowDef}

Let $\alpha:\R^k \curvearrowright (X,d)$ be an action by uniformly continuous homeomorphisms of a metric space $(X,d)$. If $A \subset \R^k$, the {\it Bowen distance} between a pair of points $x$ and $y$ with respect to $A$ is:

\[ d_B^A(x,y) = \sup_{a \in A} d(a \cdot x, a \cdot y) \]

Since the action is by uniformly continuous homeomorphisms, $d_B^A$ is equivalent to $d$. Let $\Bow^A(x,\ve)$ denote the $A$-Bowen ball of radius $\ve$ around $x$. Fix a compact set $K \subset X$. Let $N_B(A,\ve,K)$ be the minimal number of $A$-Bowen balls of radius $\ve$ required to cover $K$ and $S_B(A,\epsilon,K)$ be the maximal number of $A$-Bowen balls of radius $\epsilon$ which can be placed in $X$ disjointly with centers in $K$. It is clear that
$$S_B(A,\epsilon,K)\geq N_B(A,\ve,K)\geq S_B(A,2\epsilon,K).$$

\begin{definition}\label{def:topologicalSlow}
Fix a family of F{\o}lner sets $F_n$ for $\R^k$, and define the {\it (volume-normalized) topological polynomial upper slow entropy of $\alpha:\R^k \curvearrowright (X,d)$ with respect to $F_n$} by:
\[ \overline{h}_{\operatorname{top}}(\alpha)=\sup_{K} \lim_{\ve \to 0} \limsup_{n \to \infty} \dfrac{\log N_B(F_n,\ve,K)}{\log(\Leb(F_n))}=\sup_{K} \lim_{\ve \to 0} \limsup_{n \to \infty} \dfrac{\log S_B(F_n,\ve,K)}{\log(\Leb(F_n))}. \]
Similarly, we define the (volume-normalized) topological polynomial lower slow entropy $\underline{h}_{\operatorname{top}}(\alpha)$ by replacing $\limsup$ with $\liminf$. If $\overline{h}_{\operatorname{top}}(\alpha)=\underline{h}_{\operatorname{top}}(\alpha)$, then we define this value as the {\it (volume-normalized) topological polynomial slow entropy $h_{\operatorname{top}}(\alpha)$ of $\alpha:\R^k \curvearrowright (X,d)$ with respect to $F_n$}.
\end{definition}

\subsection{Metric slow entropy}\label{sec:metricSlowDef}
Let $\alpha:\R^k \curvearrowright (X,\mu)$ be an action of $\R^k$ by measure-preserving transformations, and $\mc P = \set{P_1,\dots,P_n}$ be a finite partition of $X$. The {\it coding map} for the action is the map $\gamma_{\mc P} : X \times \R^k \to  \set{1,\dots,n}$ defined by $a \cdot x \in P_{\gamma_{\mc P}(x,a)}$ for $(x,a) \in X \times \R^k$. If $A \subset \R^k$ is a set with positive Lebesgue measure, the {\it Hamming distance} between a pair of points $x$ and $y$ with respect to $A$ is:
\[ d_H^{A,\mc P}(x,y) = 1 - \dfrac{\Leb(\set{a \in A : \gamma_{\mc P}(a,x) = \gamma_{\mc P}(a,y)})}{\Leb(A)} = \dfrac{\Leb(\set{a \in A : \gamma_{\mc P}(a,x) \not= \gamma_{\mc P}(a,y)})}{\Leb(A)}.\]
Notice that, a priori, $d_H^A$ is only a pseudometric, but that it satisfies the triangle inequality. We may therefore define {\it Hamming balls}:
\[ B_H^{A,\mc P}(x,\ve) = \set{ y \in X : d_H^{A,\mc P}(x,y) < \ve}.\]
Finally, let $N_H(A,\ve,\mc P)$ denote the minimal cardinality of a set $\set{x_1,\dots,x_N}$ such that $\mu\left(\bigcup_{i=1}^N B_H^{A,\mc P}(x_i,\ve)\right) > 1-\ve$.

\begin{definition}\label{def:metricSlow}
Fix a family of F{\o}lner sets $F_n$ for $\R^k$, and define the {\it (volume-normalized) metric polynomial upper slow entropy of $\alpha:\R^k \curvearrowright (X,\mu)$ with respect to $F_n$} by:
\[ \overline{h}_{\mu}(\alpha)=\sup_{\mc P} \lim_{\ve \to 0} \limsup_{n \to \infty} \dfrac{\log N_H(F_n,\ve,\mc P)}{\log(\Leb(F_n))}.\]
Similarly, we define the (volume-normalized) metric polynomial lower slow entropy $\underline{h}_{\mu}(\alpha)$ by replacing $\limsup$ with $\liminf$. If $\overline{h}_{\mu}(\alpha)=\underline{h}_{\mu}(\alpha)$, then we define this value as the {\it (volume-normalized) metric polynomial slow entropy $h_{\mu}(\alpha)$ of $\alpha:\R^k \curvearrowright (X,d)$ with respect to $F_n$}.
\end{definition}

\section{Preliminaries on homogeneous spaces and algebraic actions}\label{sec:preliminaries}

\subsection{Metrics and measures on homogeneous spaces}\label{sec:MetricsMeasures}
Suppose $G$ is a connected Lie group with Lie algebra $\mathfrak{g}$ and $\Gamma\subset G$ is a discrete subgroup. We consider the metric $d_G$ on $G/\Gamma$ induced by the right invariant metric on $G$. More precisely, fix an inner product $\langle\cdot,\cdot\rangle_0$ on $\mathfrak{g}$, then define $\langle\cdot,\cdot\rangle$ for $v,w\in T_gG$ as
$$\langle v,w\rangle=\langle dR_{g^{-1}}v,dR_{g^{-1}}w\rangle_0,$$
where $R_g$ is the right translations on $G$.

Notice that $\langle\cdot,\cdot\rangle$ is right-invariant by its construction and thus it induces a Riemannian metric on $G/\Gamma$. Recall that the Riemannian metric has an associated $C^{\infty}$ mapping $\exp_{\operatorname{geom}}:\mathfrak{g}\to G$ satisfying
$$d_0\exp_{\operatorname{geom}}=\id.$$
Similar to the algebraic exponential, there is a local inverse of $\exp_{\operatorname{geom}}$ and we denote it as $\log_{\operatorname{geom}}$. The following is a direct consequence of the above construction of the inner product:
\begin{lemma}
The Riemannian volume is a right Haar measure on $G$. In particular, it is independent of the metric $\langle\cdot,\cdot\rangle_0$ when determining a probability measure on the homogeneous space $G/\Gamma$.
\end{lemma}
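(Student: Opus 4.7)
The plan is to prove the two assertions in turn: right-invariance of the Riemannian volume on $G$, and then independence of the resulting probability measure on $G/\Gamma$ from the choice of $\langle\cdot,\cdot\rangle_0$.

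For the first assertion, I would start from the definition. By construction, for any $h \in G$ and $v,w \in T_gG$,
\[
\langle dR_h v, dR_h w\rangle_{gh} = \langle dR_{(gh)^{-1}} dR_h v, dR_{(gh)^{-1}} dR_h w\rangle_0 = \langle dR_{g^{-1}} v, dR_{g^{-1}} w\rangle_0 = \langle v,w\rangle_g,
\]
using $R_{(gh)^{-1}} \circ R_h = R_{g^{-1}}$ on the base point. Hence $R_h$ is an isometry for every $h$, so the Riemannian volume form $\mathrm{vol}$ it induces is right-invariant. By uniqueness of right Haar measure (up to a positive scalar), $\mathrm{vol}$ is a right Haar measure.

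For the second assertion, let $\langle\cdot,\cdot\rangle_0$ and $\langle\cdot,\cdot\rangle_0'$ be two inner products on $\mf g$, producing right-invariant Riemannian volumes $\mathrm{vol}$ and $\mathrm{vol}'$ on $G$. Both are right Haar measures, so there exists $c>0$ with $\mathrm{vol}' = c \cdot \mathrm{vol}$. Since $\Gamma$ acts on $G$ on the right (via the right regular action, which is the relevant one for forming the quotient $G/\Gamma$), both measures descend to $\Gamma$-invariant measures on $G/\Gamma$, and the pushforwards under the quotient map $\pi:G\to G/\Gamma$ are related by the same scalar, $\pi_*\mathrm{vol}' = c\cdot \pi_*\mathrm{vol}$. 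After normalizing each to a probability measure on $G/\Gamma$ (which is possible since $G/\Gamma$ has finite volume), the scalar $c$ cancels, yielding the same probability measure. Thus the normalized Riemannian volume on $G/\Gamma$ is independent of the choice of $\langle\cdot,\cdot\rangle_0$.

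The only mild subtlety is verifying right-invariance cleanly, since the base point changes under $R_h$; this is essentially the one-line computation above, but must be done carefully because the inner product is base-point dependent. Everything else is a direct appeal to uniqueness of Haar measure.
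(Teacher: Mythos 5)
Your proof is correct, and the paper in fact gives no argument at all (it states the lemma as ``a direct consequence of the above construction''); your computation---that $dR_{(gh)^{-1}}\circ dR_h = dR_{g^{-1}}$ makes each $R_h$ an isometry, so the Riemannian volume is right-invariant and hence right Haar, with two such measures differing by a scalar that cancels upon normalization on $G/\Gamma$---is exactly the standard argument the paper is gesturing at.
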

From now on, we denote the probability measure on  $G/\Gamma$ as $\mu$ and its corresponding Haar measure on $G$ as $\bar{\mu}$.

\subsection{Fundamental domain and injectivity radius}\label{sec:fundamentalDomain}
Suppose that $G$ is a connected Lie group and $\Gamma\subset G$ is a finite volume lattice with canonical projection $\pi:G\to G/\Gamma$. A \emph{Dirichlet fundamental domain} of $G$ for $G/\Gamma$ is a subset $F\subset G$ with $\bar{\mu}(\partial F)=0$ such that
$$F=\{g\in G: d_G(g,e)\leq d(g\gamma,e)\text{ for all }\gamma\in\Gamma\}.$$
By the construction of $F$, we know there exists at least one lift to $F$ for every point of $G/\Gamma$. Moreover, by restriction to an open dense subset of $G/\Gamma$, this lift is unique. It is also worth to point out that $\pi:(F,\bar{\mu})\to (G/\Gamma,\mu)$ is a measurable isomorphism.

For $y\in G/\Gamma$ let $\inj(y)>0$ be the \emph{injectivity radius} of $y$:
$$\inj(y)\doteq\sup\{r\geq 0: B_G(y,r)\cap B_G(y,r)\gamma=\emptyset\text{ for all }\gamma\neq e\},$$
where $B_G(y,r)\subset G$ is the ball centered at $y$ with radius $r$ in the metric $d_G$.

For $K\subset G/\Gamma$ we define
$$\inj(K)=\inf_{y\in K}\inj(y)$$ to be the \emph{injectivity radius} of $K$. We list the following classical lemma for future use:
\begin{lemma}\label{lem:injectiveRadiusComapctSet}
For every $\epsilon>0$, there exists a compact set $K_{\epsilon}\subset F$ with $\bar{\mu}(K_{\epsilon})>1-\epsilon$ such that
$$\inj(\pi(K_{\epsilon}))=\inf_{\gamma\in\Gamma\setminus\{e\}}\inf_{z\in K_{\epsilon}}d_G(z\gamma z^{-1},e)>0.$$
\end{lemma}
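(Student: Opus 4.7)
The plan is to reduce the problem to showing that the auxiliary function
\[ \rho(z) := \inf_{\gamma \in \Gamma \setminus \{e\}} d_G(z\gamma z^{-1}, e) \]
is strictly positive and upper semi-continuous on $F$, and then apply inner regularity of the finite Haar measure $\bar{\mu}$. The identity $d_G(z, z\gamma) = d_G(e, z\gamma z^{-1})$, which follows directly from right-invariance of $d_G$, shows that $\inj(\pi(z))$ is governed (up to a factor depending on the convention) by $\rho(z)$; in particular, $\inj(\pi(K_{\epsilon})) \geq \delta > 0$ will follow at once from $\rho \geq \delta$ on $K_{\epsilon}$.

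The first step is pointwise positivity: fix $z \in G$. Since $\Gamma$ is discrete in $G$ and conjugation by $z$ is a self-homeomorphism of $G$, the conjugate set $z\Gamma z^{-1}$ is also discrete, so it has a positive distance to $e$ away from $e$ itself, giving $\rho(z) > 0$. The second step is that $\rho$ is upper semi-continuous: for each fixed $\gamma \in \Gamma$, the map $z \mapsto d_G(e, z\gamma z^{-1})$ is continuous, and an infimum of continuous functions is USC. Hence for every $\delta > 0$ the superlevel set $A_{\delta} := \{z \in F : \rho(z) \geq \delta\}$ is closed in $F$.

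The third step uses the finiteness of $\bar{\mu}$. Because $\rho > 0$ everywhere on $F$, the sets $A_{\delta}$ increase to $F$ as $\delta \searrow 0$, so by continuity of the probability measure $\bar{\mu}$ one may choose $\delta > 0$ with $\bar{\mu}(A_{\delta}) > 1 - \epsilon/2$. Now apply inner regularity of the finite Borel measure $\bar{\mu}$ on the $\sigma$-compact space $F \subset G$ to choose a compact set $K' \subset F$ with $\bar{\mu}(K') > 1 - \epsilon/2$. Setting $K_{\epsilon} := K' \cap A_{\delta}$ produces a closed subset of the compact set $K'$, hence compact, with $\bar{\mu}(K_{\epsilon}) > 1 - \epsilon$ by elementary inclusion-exclusion, and with $\rho \geq \delta$ on it.

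No single step is genuinely hard; the main subtlety is simply that $\rho$ is \emph{not} uniformly bounded below on all of $F$ when $G/\Gamma$ is non-compact (this failure is exactly the cusp phenomenon). The reason the argument still goes through is that one only needs the superlevel sets $A_{\delta}$ to have measure approaching $1$, rather than to equal $F$; everything else is a clean combination of upper semi-continuity, discreteness of $\Gamma$, and inner regularity of a finite Borel measure.
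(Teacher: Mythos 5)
Your proof is correct. The paper gives no argument for this lemma (it is introduced as ``classical''), so there is no in-paper proof to compare against; the route you take --- pointwise positivity of $\rho(z)=\inf_{\gamma\ne e}d_G(z\gamma z^{-1},e)$ from discreteness (hence closedness) of the conjugate subgroup $z\Gamma z^{-1}$, upper semicontinuity of $\rho$ as an infimum of continuous functions so that the superlevel sets $A_\delta$ are closed, exhaustion $\bigcup_{\delta>0}A_\delta=F$ together with continuity of the finite measure, and inner regularity of $\bar{\mu}$ on the $\sigma$-compact set $F$ to pass to a compact subset --- is the standard one, and each step is sound. One small remark: with the paper's definition $\inj(y)=\sup\{r\ge 0 : B_G(y,r)\cap B_G(y,r)\gamma=\emptyset\ \text{for all}\ \gamma\ne e\}$ and right-invariance of $d_G$, one has $B_G(y,r)\gamma=B_G(y\gamma,r)$ and hence the clean identity is $\inj(y)=\tfrac12\,\rho(y)$ rather than $\inj(y)=\rho(y)$; so the equality displayed in the lemma is off by a convention-dependent factor of $2$. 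You sensibly flag this, and since only positivity is used downstream, it does not affect the argument.
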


\subsection{Uniformities and entropy}\label{sec:Uniformities}

We briefly recall a discussion on uniformities and their use in defining slow entropy in \cite{KanigowskiVinhageWei}. Uniformities allow us to make computations and measure ``distances'' in the Lie algebra rather than the Lie group, where calculations can't be carried out explicitly without great pains. It will do little harm to the reader to skip this section and think of the sets $V^{(\ve)}(x)$ defined here as metric balls $B(x,\ve)$. This formal trick is necessary, since $``d \,\mbox{''}(y,\exp(X)y) = \norm{X}$ does not satisfy the triangle inequality, but can be used to construct a uniformity, as we describe below.

Metric spaces are special examples of topological spaces called {\it uniform spaces}. In a metric space $(X,d)$, the uniformity on $X$ induced by the metric is the collection of open sets of $X\times X$ which contain a set of the form $B^{(\epsilon)} = \set{(x,y) : d(x,y) < \epsilon, x,y \in X} \subset X \times X$. Notice that the balls $B(x,\epsilon)$ are exactly $B^{(\epsilon)}(x) = (\set{x} \times X) \cap B^{(\epsilon)}$. More generally, if $V$ is an element of the uniformity $\mc U$, we let $V(x) = (\set{x} \times X) \cap V$. Uniformities axiomatize certain properties of these subsets; for basics on uniform topological spaces, we refer the reader to \cite[Chapter 6]{kelley}. Let $\mc U$ be the uniformity induced by the metric on $G$ and $G/\Gamma$,  which we abusively denote by the same letter. Recall that if $U,V \in \mc U$, then \[U \, * \, V = \set{(x,z) : \mbox{there exists }y\mbox{ such that }(x,y) \in U, (y,z) \in V}.\]

Let $G$ be a Lie group and $\Gamma$ be a discrete subgroup. Notice that for every $ x=\bar{x}\Gamma \in G/\Gamma$, there exists $\epsilon > 0$ such that the map $X \mapsto \exp(X)x= \exp(X)\bar{x}\Gamma$ is injective on $\set{ X \in \mf g : \norm{X} < \epsilon}$. This follows easily from the discreteness of $\Gamma$, and the fact that $\exp(X_i)x\gamma_i \to \exp(X)x\gamma$ implies that the distance between $x^{-1} \exp(-X)\exp(X_i) x$ and $\gamma\gamma_i^{-1}$ tends to zero. Hence if $X_i$ and $X$ are both sufficiently close to 0 (with the closeness depending on the conjugation action of $x$), $\gamma_i$ is eventually constant, and we may assume without loss of generality it is $\id$. Therefore, $X_i$ converges to $X$ and the map is locally injective on a sufficiently small neighborhood of $x$ (which may vary with $x$).


Suppose that there is a fixed norm $\norm{\cdot}$ on $\mf g$. The following lemma will be useful  later:

\begin{lemma}
\label{lem:uniformity-base}
If $G$ is a Lie group with Lie algebra $\mf g$, then  for any sufficiently small $\epsilon_0$, the collection $\mc B$ of sets $V^{(\epsilon)} = \set{(g,\exp(X)g) : g \in G, \norm{X} < \epsilon}$, $0 < \epsilon < \epsilon_0$ is a base of the uniformity of $G$ and $G/\Gamma$ induced by any left-invariant metric for any norm $\norm{\cdot}$ on $\mf g$.
\end{lemma}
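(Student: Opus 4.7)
The plan is to verify the two defining conditions of a base of a uniformity: every set $V^{(\epsilon)} \in \mc B$ is itself an entourage, and conversely every basic metric entourage $B^{(\delta)} = \{(x,y) : d(x,y) < \delta\}$ contains some $V^{(\epsilon)}$. Since all norms on the finite-dimensional space $\mf g$ are equivalent, it suffices to work with a single fixed norm; the ``for any norm'' statement then follows for free, possibly after shrinking $\epsilon_0$ by an overall constant.

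The analytic core is that $\exp : \mf g \to G$ is a local diffeomorphism at $0$ with $d\exp_0 = \id_{\mf g}$. By the inverse function theorem I would first choose $\epsilon_0 > 0$ so that $\exp$ restricts to a diffeomorphism from $\{X : \norm{X} < \epsilon_0\}$ onto an open neighborhood $U$ of $e$ in $G$. The translation-invariance of $d_G$, matched to the side on which $\exp(X)$ acts in the definition of $V^{(\epsilon)}$, then reduces the comparison $d_G(g, \exp(X)g) \asymp \norm{X}$ to the base point case $g = e$, where it follows from $d\exp_0 = \id$ by continuity. This yields a uniform constant $C$ with $d_G(g, \exp(X)g) \le C \norm{X}$ whenever $\norm{X} < \epsilon_0$, giving the inclusion $V^{(\epsilon)} \subset B^{(C\epsilon)}$ for each $\epsilon < \epsilon_0$. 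The converse inclusion uses that $U$ is open: for any $\epsilon < \epsilon_0$, pick $\delta > 0$ with $B_G(e, \delta) \subset \exp(\{X : \norm{X} < \epsilon\})$; then by translation invariance $d_G(g,h) < \delta$ forces $hg^{-1} \in \exp(\{X : \norm{X} < \epsilon\})$, i.e.\ $h = \exp(X) g$ for some $\norm{X} < \epsilon$, so $B^{(\delta)} \subset V^{(\epsilon)}$.

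The extension to $G/\Gamma$ is then essentially formal. The inequality $d_{G/\Gamma}(x, \exp(X)x) \le d_G(g, \exp(X)g)$ (for any lift $g$ of $x$) immediately passes the estimate $V^{(\epsilon)} \subset B^{(C\epsilon)}$ to the quotient. For the other direction, if $d_{G/\Gamma}(x,y) < \delta$, the definition of the quotient metric provides lifts $g,h \in G$ of $x,y$ with $d_G(g,h) < \delta$; the $G$-case then supplies $X$ with $h = \exp(X)g$ and $\norm{X} < \epsilon$, so $y = \exp(X)x \in V^{(\epsilon)}(x)$. The point worth emphasising, and the main obstacle to guard against, is that this argument must work uniformly in $x \in G/\Gamma$ even when the space is non-compact and the injectivity radius degenerates; this is fine because we need only the \emph{existence} of lifts within distance $\delta$, not any uniform lower bound on the injectivity radius. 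Beyond that, the proof is careful bookkeeping of the translation-invariance side convention so that the comparison $d_G(g,\exp(X)g) \asymp \norm{X}$ is genuinely uniform in $g$.
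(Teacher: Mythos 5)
Your proof is correct. The paper actually states Lemma \ref{lem:uniformity-base} without proof (treating it as a standard consequence of the exponential map being a local diffeomorphism), so there is no competing argument in the text, but your route is exactly the expected one: the inverse function theorem with $d\exp_0=\id$ gives the two-sided comparison $d_G(e,\exp(X))\asymp\norm{X}$ near the identity, translation-invariance globalizes it uniformly in $g$, norm equivalence on $\mf g$ takes care of the ``any norm'' clause, and the quotient case follows by choosing lifts realizing $d_{G/\Gamma}(x,y)<\delta$, which requires no lower bound on the injectivity radius.

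One detail worth making explicit rather than leaving implicit in ``matched to the side on which $\exp(X)$ acts'': the relation $V^{(\epsilon)}$ is $h=\exp(X)g$, i.e.\ $hg^{-1}=\exp(X)$, so the invariance you actually need to reduce $d_G(g,\exp(X)g)$ to $d_G(e,\exp(X))$ is \emph{right}-invariance, $d_G(g,\exp(X)g)=d_G(e,\exp(X))$. This is consistent with the paper's construction of $d_G$ in Section \ref{sec:MetricsMeasures} (the metric is built to be right-invariant), and the word ``left-invariant'' in the lemma statement appears to be a typographical slip. With a genuinely left-invariant metric, $d_G(g,\exp(X)g)=d_G(e,g^{-1}\exp(X)g)=d_G(e,\exp(\Ad(g^{-1})X))$ depends on $g$ and the inclusions you want would fail to be uniform; so it is good that your argument uses the side compatible with the definition of $V^{(\epsilon)}$, but you should say so explicitly rather than deferring to the paper's phrasing.
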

We will work with the base $\mc B$, quite extensively, and therefore establish convenient notation related to it. If $\bar{x},\bar{y}\in G$ and $ x,y \in G/\Gamma$, then $\bar{y} \in V^{(\epsilon)}(\bar{x})$ implies that $\bar{y} = \exp(X)\bar{x}$ for some $X = X(\bar{x},\bar{y}) \in \mf g$ with $\norm{X} < \epsilon$.  Notice that $X(\bar{x},\bar{y})$ is well-defined for points $\bar{x},\bar{y} \in G$ or $x,y \in G/\Gamma$ if they are sufficiently close.

Given some $V \in \mc U$ and $K \subset X$ compact, we define a $V$-separated set of $K$ to be a set of points $\set{x_i}$ such that $(x_i,x_j) \not\in V$ if $i \not= j$.\footnote{Notice that if $V = \set{ (x,y) \in X \times X : d(x,y) < \epsilon}$, then this is the usual notion of an $\epsilon$-separated set.} We similarly define a $V$-cover. Given a uniformly continuous action $\alpha : \R^k \curvearrowright X$, $A \subset \R^k$ and $V \in \mc U$, let $$V_A = \bigcap_{a \in A} (a \times a)^{-1}(V) \in \mc U.$$ It is worth to notice that if $V$ is the set corresponding to the metric ball as above, then $V_A(x) = V_A \cap (\set{x} \times X) \subset X$ is the set corresponding to the Bowen ball determined by $A$.

Given $K \subset X$ compact, let $N_B(A,V,K)$ be the minimal cardinality of a $V_A$-cover of $X$, and $S_B(A,V,K)$ be the maximal cardinality of a $V_A$-separated subset of $K$.

The following is a simple adaptation of an argument of Hood in \cite{hood74}:

\begin{lemma}
\label{lem:uniform-entropy}
If $V^{(\epsilon)} \in \mc U$ is a nested sequence of subsets which is a base of the uniformity $\mc U$, then one may replace $N_B(A,\ve,K)$ and $S_B(A,\ve,K)$ in the definition of topological slow entropy with $N_B(A,V^{(\ve)},K)$ and $S_B(A,V^{(\ve)},K)$.
\end{lemma}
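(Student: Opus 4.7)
The plan is to exploit that both the family of metric-ball entourages $B^{(\epsilon)}=\{(x,y):d(x,y)<\epsilon\}$ and the family $\{V^{(\epsilon)}\}$ are bases of one and the same uniformity $\mc U$, hence they are cofinal in $\mc U$. This cofinality will give two-sided sandwich comparisons between $N_B(A,\epsilon,K)$ and $N_B(A,V^{(\epsilon)},K)$ (and likewise for $S_B$), which survive taking $\limsup_n$ and then $\epsilon\to 0$.

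First I would record the two base comparisons: since $\{V^{(\epsilon)}\}_{\epsilon}$ is a base for $\mc U$, for every $\epsilon>0$ there exists $\delta(\epsilon)>0$ with $V^{(\delta(\epsilon))}\subset B^{(\epsilon)}$; conversely, since the metric balls form a base, for every $\epsilon>0$ there exists $\delta'(\epsilon)>0$ with $B^{(\delta'(\epsilon))}\subset V^{(\epsilon)}$. By nestedness of $\{V^{(\epsilon)}\}$ one may moreover choose $\delta(\epsilon),\delta'(\epsilon)$ to tend to $0$ as $\epsilon\to 0$.

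Next I would transfer these inclusions to the Bowen setting. The map $W\mapsto W_A=\bigcap_{a\in A}(a\times a)^{-1}(W)$ is monotone with respect to inclusion, so
\[ V^{(\delta(\epsilon))}_A\subset (B^{(\epsilon)})_A, \qquad (B^{(\delta'(\epsilon))})_A\subset V^{(\epsilon)}_A. \]
A cover by sets $V(x_i)$ for a larger entourage $V$ uses fewer points, while a $V$-separated set uses more points when $V$ is smaller; hence the above inclusions yield
\[ N_B(A,\epsilon,K)\le N_B(A,V^{(\delta(\epsilon))},K),\qquad N_B(A,V^{(\epsilon)},K)\le N_B(A,\delta'(\epsilon),K), \]
and similarly
\[ S_B(A,V^{(\epsilon)},K)\le S_B(A,\delta(\epsilon),K),\qquad S_B(A,\epsilon,K)\le S_B(A,V^{(\delta'(\epsilon))},K). \]

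Finally, dividing each of these inequalities by $\log \Leb(F_n)$, taking $\limsup_{n\to\infty}$, and letting $\epsilon\to 0$ (so that $\delta(\epsilon),\delta'(\epsilon)\to 0$ as well) collapses the chain of inequalities into equalities between the $\lim_{\epsilon\to 0}\limsup_n$ quantities for $N_B(A,\epsilon,K)$ and $N_B(A,V^{(\epsilon)},K)$, and likewise for $S_B$. Taking the supremum over compact $K\subset X$ then shows that the topological slow entropy of $\alpha$ is the same whether computed with metric-$\epsilon$-Bowen balls or with the uniformity base $\{V^{(\epsilon)}\}$. The only thing to be careful of is the direction of monotonicity (smaller entourage means larger separated set but smaller cover), which is standard; there is no real obstacle beyond this bookkeeping.
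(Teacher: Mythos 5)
The paper gives no written-out proof here; it simply cites Hood's argument in \cite{hood74}, which is exactly the cofinality-of-bases argument you wrote out. Your proposal is correct and matches that standard route: two-sided base comparisons, monotonicity of $W\mapsto W_A$, the count comparisons, and the sandwich after $\limsup_n$ and $\epsilon\to 0$. One small bookkeeping slip: in your $S_B$ line the roles of $\delta(\epsilon)$ and $\delta'(\epsilon)$ are swapped relative to how you defined them --- from $V^{(\delta(\epsilon))}\subset B^{(\epsilon)}$ one gets $S_B(A,\epsilon,K)\le S_B(A,V^{(\delta(\epsilon))},K)$, and from $B^{(\delta'(\epsilon))}\subset V^{(\epsilon)}$ one gets $S_B(A,V^{(\epsilon)},K)\le S_B(A,\delta'(\epsilon),K)$ --- but the inequality directions are right and the argument is unaffected.
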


\subsection{Independence of norm for norm-induced sequences}\label{sec:EquivalenceOfNormFolner}
In this section, we will prove the following proposition:
\begin{proposition}\label{prop:EquivalenceOfNormFolner}
For any action $\alpha : \R^k \curvearrowright (X,\mu)$, the norm-induced polynomial slow entropy is independent of the norm chosen on $\R^k$.
\end{proposition}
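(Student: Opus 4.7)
The plan is to combine equivalence of norms on $\R^k$, polynomial volume scaling of the balls $F_n$, and a clean translation of Hamming (resp.\ Bowen) covers between pairs of nested F\o lner sequences. Let $\|\cdot\|_1, \|\cdot\|_2$ be two norms with associated sequences $F_n^1, F_n^2$. From norm equivalence I first fix constants $0 < A \le 1 \le B$ (enlarging $B$ or shrinking $A$ if necessary, which only strengthens the inclusions) so that
$$F_{\lfloor n/B \rfloor}^1 \subset F_n^2 \subset F_{\lceil n/A \rceil}^1.$$
Since $\Leb(F_n^j) = n^k\, \Leb(F_1^j)$, one has $\log\Leb(F_n^j) = k\log n + O(1)$, so the ratios of log-volumes of two F\o lner balls whose radii differ by a bounded multiplicative factor tend to $1$.

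The heart of the metric argument is the elementary estimate
$$d_H^A(x,y) \;\le\; \frac{\Leb(A')}{\Leb(A)}\,d_H^{A'}(x,y)\qquad \text{for } A \subset A',$$
which follows immediately from the definition of $d_H^A$. It gives $B_H^{A'}(x,\ve) \subset B_H^A(x, \ve\,\Leb(A')/\Leb(A))$, so any Hamming-$A'$ cover of radius $\ve$ and measure $> 1-\ve$ is a Hamming-$A$ cover of radius $K\ve$ and measure $> 1 - K\ve$, with $K = \Leb(A')/\Leb(A)$. Applied in both directions of the sandwich above -- and using that $\Leb(F_n^2)/\Leb(F_{\lfloor n/B\rfloor}^1)$ and $\Leb(F_{\lceil n/A\rceil}^1)/\Leb(F_n^2)$ are bounded for large $n$ by constants $K, K'$ depending only on $A, B, k$ and the two norms -- this produces
$$N_H(F_{\lfloor n/B \rfloor}^1,\, K\ve,\, \mc P) \;\le\; N_H(F_n^2,\, \ve,\, \mc P) \;\le\; N_H(F_{\lceil n/A \rceil}^1,\, \ve/K',\, \mc P).$$

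After dividing by $\log\Leb(F_n^2)$ and using the log-volume asymptotics, taking $\limsup$ in $n$ sandwiches the norm-$\|\cdot\|_2$ quantity between $\limsup_m \log N_H(F_m^1, K\ve, \mc P)/\log\Leb(F_m^1)$ and the analog with $\ve/K'$. The upper step just uses that $\lceil n/A \rceil$ is cofinal in $\N$; the lower step, which I expect to be the main subtle point, uses the cofinality observation that \emph{because} $B \ge 1$, the interval $[mB, (m+1)B)$ has length $\ge 1$ and therefore contains an integer for every $m$, so $n \mapsto \lfloor n/B \rfloor$ is eventually surjective onto $\N$ and the $\limsup$ along this subsequence already equals the full $\limsup_m$. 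Sending $\ve \to 0$ absorbs $K$ and $K'$ via a change of variables, both outer quantities collapse onto the norm-$\|\cdot\|_1$ $\mc P$-contribution to the upper slow entropy, and $\sup_{\mc P}$ gives the desired equality. The $\liminf$ argument is identical, and the topological case is strictly easier: $d_B^A$ is directly monotone in $A$ (namely $d_B^A \le d_B^{A'}$ for $A \subset A'$), so no radius rescaling is needed and the same sandwich plus cofinality argument applies to $N_B$ and $S_B$ in place of $N_H$.
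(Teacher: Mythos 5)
Your proof is correct and follows the same high-level plan as the paper's: use norm equivalence to sandwich the two norm-induced F{\o}lner sequences, note that volumes scale polynomially so the ratios of log-volumes tend to $1$, compare the covering counts along the sandwich, and absorb everything in the $\limsup$ and $\ve\to0$ limits. But your treatment of the Hamming comparison is actually \emph{more careful} than the paper's. In the paper's inequality \eqref{eq:FolnerEquiv3} it is asserted that $A \subset A'$ implies $N_H(A,\ve,\mc P) \le N_H(A',\ve,\mc P)$, with no justification. This is not true in general: $d_H^A$ is not monotone in $A$ (unlike $d_B^A$, which is), since enlarging $A$ enlarges both the numerator and the denominator in the definition. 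Your estimate $d_H^A \le \frac{\Leb(A')}{\Leb(A)}\,d_H^{A'}$ for $A\subset A'$ is the correct statement, and the resulting radius rescaling $\ve \mapsto K\ve$ (with $K$ bounded since the F{\o}lner sets differ by a bounded multiplicative factor) is exactly what is needed to make the comparison legitimate; the rescaling then disappears in the $\ve\to 0$ limit, which is why the paper's final conclusion still holds. You also make explicit the cofinality observation (that $n\mapsto\lfloor n/B\rfloor$ is eventually surjective onto $\N$ when $B\ge 1$), which is needed to turn a pointwise inequality indexed by $n$ into a comparison of full $\limsup$'s and which the paper leaves implicit. In short: same strategy, but your version closes a small gap in the paper's Hamming step.
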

\begin{proof}
For a start, we recall that all norms on $\mathbb{R}^k$ are equivalent: suppose $\|\cdot\|_1$ and $\|\cdot\|_2$ are two norms in $\mathbb{R}^k$, then there are positive constants $a,b$ such that $$a\|v\|_1\leq\|v\|_2\leq b\|v\|_1,\text{ }\forall v\in\mathbb{R}^k.$$

As a natural corollary of norm equivalence, we obtain the following relation for $F_n^1=\{v\in\mathbb{R}^k:\|v\|_1\leq n\}$ and $F_n^2=\{v\in\mathbb{R}^k:\|v\|_2\leq n\}$:
$$F_{an}^2\subset F_n^1\subset F_{bn}^2,$$
and thus
\begin{equation}\label{eq:FolnerEquiv1}
F_{[an]}^2\subset F_{an}^2\subset F_n^1\subset F_{bn}^2\subset F_{[bn]+1}^2,
\end{equation}
where $[x]$ represents the maximal integer less equal than $x\in\mathbb{R}$.

Moreover, if we let $\|v\|_2=\max_{1\leq i\leq k}|v_i|$ for $v=(v_1,\ldots,v_k)\in\mathbb{R}^k$, \eqref{eq:FolnerEquiv1} also implies that
$$\Leb(F_{[an]}^2)=(2[an])^k\leq\Leb(F_n^1)\leq(2[bn]+2)^k=\Leb(F_{[bn]+1}^2).$$
Recall that if $x\in\mathbb{R}$ and $x\geq2$, then $\frac{x}{2}\leq x-1<[x]\leq x\leq2x$. Then when $an,bn\geq 2$, the above inequality gives us
\begin{equation}\label{eq:FolnerEquiv2}
(an)^k\leq\Leb(F_{[an]}^2)\leq\Leb(F_n^1)\leq\Leb(F_{[bn]+1}^2)\leq(5bn)^k
\end{equation}

Also notice that \eqref{eq:FolnerEquiv1} implies that for any compact set $K\subset G/\Gamma$ and any finite measurable partition $\mathcal{P}$ of $G/\Gamma$:
\begin{equation}\label{eq:FolnerEquiv3}
\begin{aligned}
&N_B(F_{[an]}^2,\epsilon,K)\leq N_B(F_n^1,\epsilon,K)\leq N_B(F_{[bn]+1}^2,\epsilon,K),\\
&N_H(F_{[an]}^2,\epsilon,\mathcal{P})\leq N_H(F_n^1,\epsilon,\mathcal{P})\leq N_H(F_{[bn]+1}^2,\epsilon,\mathcal{P}).
\end{aligned}
\end{equation}
Combining \eqref{eq:FolnerEquiv2} and \eqref{eq:FolnerEquiv3}, we obtain for $n$ large enough:
\begin{equation}
\begin{aligned}
&\dfrac{\log N_B(F_{[an]}^2,\ve,K)}{\log((\frac{5b}{a})^k\Leb(F_{[an]}^2))}\leq\dfrac{\log N_B(F_n^1,\ve,K)}{\log(\Leb(F_n^1))}\leq\dfrac{\log N_B(F_{[bn]+1}^2,\ve,K)}{\log((\frac{a}{5b})^k\Leb(F_{[bn]+1}^2))},\\
&\dfrac{\log N_H(F_{[an]}^2,\ve,\mathcal{P})}{\log((\frac{5b}{a})^k\Leb(F_{[an]}^2))}\leq\dfrac{\log N_H(F_n^1,\ve,\mathcal{P})}{\log(\Leb(F_n^1))}\leq\dfrac{\log N_H(F_{[bn]+1}^2,\ve,\mathcal{P})}{\log((\frac{a}{5b})^k\Leb(F_{[bn]+1}^2))}.
\end{aligned}
\end{equation}
which implies that for $\alpha:\mathbb{R}^k\curvearrowright G/\Gamma$, the corresponding slow entropies coincide.
The arbitrariness of the norm $\|\cdot\|_1$ gives the proof of the proposition.
\end{proof}

\subsection{The adjoint representation}\label{sec:adjointAction}
Consider the action of $G$ on itself by conjugation $C_g:h\to g^{-1}hg$, where $g,h\in G$. By taking the derivative of $C_g(h)$ at the identity in the coordinate $h$, we obtain the \emph{adjoint representation} of $G$ on $\mathfrak{g}=T_eG$, which is denoted as $\Ad:G\to\operatorname{\mathfrak{g}}$. By taking the derivative of $\Ad$ in the coordinate $g$, we get the adjoint representation $\ad$ of the Lie algebra $\mathfrak{g}$, which coincides with Lie bracket $\ad(X)Y=[X,Y]$ for $X,Y\in\mathfrak{g}$. We document some standard tools from the theory of Lie groups here:
\begin{lemma}\label{lem:BasicLieTheory}
Suppose that $X,Y\in\mathfrak{g}$, then we have
$$\exp(-X)\exp(Y)\exp(X)=\exp(\Ad(\exp(X))Y),$$
$$\exp(\ad(X)):=\sum_{i=0}^{\infty}\frac{\ad(X)^i}{i!}=\Ad(\exp(X)).$$
\end{lemma}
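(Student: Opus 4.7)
The plan is to derive both identities from the uniqueness of one-parameter subgroups with a prescribed infinitesimal generator, applied in two different Lie groups, using only the definitions of $\operatorname{Ad}$ and $\operatorname{ad}$ recalled just before the lemma.

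For the first identity, I fix $X \in \mathfrak{g}$ and examine the curve $t \mapsto C_{\exp(X)}(\exp(tY)) = \exp(-X)\exp(tY)\exp(X)$. Because $C_{\exp(X)}\colon G \to G$ is a Lie-group homomorphism, this curve is a one-parameter subgroup of $G$. By the definition $\operatorname{Ad}(g) = dC_g|_e$ from Section~\ref{sec:adjointAction}, its velocity at $t=0$ is exactly $\operatorname{Ad}(\exp(X))Y$. Uniqueness of one-parameter subgroups with a given generator then forces $\exp(-X)\exp(tY)\exp(X) = \exp\bigl(t\cdot\operatorname{Ad}(\exp(X))Y\bigr)$, and setting $t=1$ gives the first formula.

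For the second identity, I apply the same principle one level up, inside $\operatorname{GL}(\mathfrak{g})$. Since $\exp(sX)$ and $\exp(tX)$ commute in $G$, the curve $t\mapsto \operatorname{Ad}(\exp(tX))$ is a one-parameter subgroup of $\operatorname{GL}(\mathfrak{g})$, irrespective of whether $\operatorname{Ad}$ is a homomorphism or anti-homomorphism in the paper's conjugation convention. Its velocity at $t=0$ is by definition the derivative of $\operatorname{Ad}$ in the group variable, evaluated on $X$, which is $\operatorname{ad}(X)$. The unique one-parameter subgroup of $\operatorname{GL}(\mathfrak{g})$ with generator $A\in\operatorname{End}(\mathfrak{g})$ is the matrix exponential $t\mapsto \sum_{i\ge 0}(tA)^i/i!$, so we conclude $\operatorname{Ad}(\exp(tX)) = \sum_{i\ge 0}(t\operatorname{ad}(X))^i/i!$; evaluating at $t=1$ yields the second formula.

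There is no genuine obstacle here, since the lemma is classical; the only point requiring care is the paper's convention $C_g(h) = g^{-1}hg$, which would reverse the order of composition of $\operatorname{Ad}$ on a product $\operatorname{Ad}(gh)$, but this plays no role above: the first argument uses only that $C_{\exp(X)}$ is a single group homomorphism, and the second uses only the commuting pair $\exp(sX), \exp(tX)$, on which order of multiplication is immaterial. Both arguments are self-contained once one accepts the definitions of $\operatorname{Ad}$ and $\operatorname{ad}$ and the correspondence between one-parameter subgroups and their generators.
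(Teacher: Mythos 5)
Your proof is correct, and it is the standard argument. Note, however, that there is no ``paper's own proof'' to compare against: the paper introduces this lemma with the sentence ``We document some standard tools from the theory of Lie groups here'' and then states it without proof, treating it as a classical fact. What you have supplied is a self-contained derivation via uniqueness of one-parameter subgroups, which is exactly the argument one would find in a Lie theory text.

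One point you handled carefully, and which is worth flagging, is the paper's nonstandard conjugation convention $C_g(h)=g^{-1}hg$, under which $\Ad$ is an \emph{anti}-homomorphism of groups. You correctly observe that this does not affect either step: the first identity only uses that the single map $C_{\exp(X)}$ is a Lie group homomorphism, and the second only uses that $\Ad(\exp(sX))$ and $\Ad(\exp(tX))$ commute because $\exp(sX)$ and $\exp(tX)$ do, so $t\mapsto\Ad(\exp(tX))$ is still a one-parameter subgroup of $\operatorname{GL}(\mathfrak g)$ whose generator is $\ad(X)$ by definition. (Incidentally, with the paper's $C_g$ convention, the infinitesimal adjoint comes out as $\ad(X)Y=-[X,Y]$ rather than $[X,Y]$ as the paper asserts a line earlier; this sign is an internal inconsistency in the paper's exposition, not in your argument, and the two identities of the lemma remain consistent with each other under the paper's $\Ad$.)
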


\section{Topological slow entropy of actions on homogeneous spaces}\label{sec:mainTopologicalCase}

\subsection{Controlled polynomial divergence}\label{sec:polynomialControl}
We recall the subalgebras ${\mf g_i}$ (see \eqref{eq:basg}).
\begin{lemma}
\label{lem:nothing-killed}
There exists some $m$ such that $\mf g = \bigoplus_{i=0}^m \mf g_i$. If $X \in \mf g_i$, there exists $U_1,\dots,U_i \in \mf u$ such that $\ad_{U_1}\dots\ad_{U_i}(X) \in \mf g_0 \setminus \set{0}$.
\end{lemma}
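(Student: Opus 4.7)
The plan is to prove the two parts in sequence: first establish that the filtration $\tilde{\mf g}_0 \subseteq \tilde{\mf g}_1 \subseteq \cdots$ exhausts $\mf g$ in finitely many steps, then use the complementarity of $\mf g_i$ in $\tilde{\mf g}_i / \tilde{\mf g}_{i-1}$ to witness the non-vanishing statement.

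For the first part, I would observe that $\tilde{\mf g}_i \subseteq \tilde{\mf g}_{i+1}$ directly from the definition in \eqref{eq:basg}, so the chain is ascending. The key point to establish is that the chain stabilizes at $\mf g$. The hypothesis that $\mf u$ is an abelian $\ad$-unipotent subalgebra means $\ad(\mf u) \subset \mathfrak{gl}(\mf g)$ is a \emph{commuting} family of nilpotent operators. By the standard consequence of Engel's theorem (simultaneous triangularization of commuting nilpotent endomorphisms), there exists a basis of $\mf g$ with respect to which every $\ad_U$ for $U \in \mf u$ is strictly upper triangular. Therefore any product $\ad_{U_1} \cdots \ad_{U_N}$ with $N \ge \dim \mf g$ vanishes identically, regardless of the choices $U_j \in \mf u$. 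This forces $\tilde{\mf g}_{N-1} = \mf g$, so we may choose $m$ to be the least index at which $\tilde{\mf g}_m = \mf g$. Given this, the direct sum decomposition $\mf g = \bigoplus_{i=0}^m \mf g_i$ follows by iteratively choosing linear complements: for each $i$ fix $\mf g_i \subseteq \tilde{\mf g}_i$ satisfying $\tilde{\mf g}_i = \tilde{\mf g}_{i-1} \oplus \mf g_i$, and then unwinding the filtration gives $\tilde{\mf g}_i = \mf g_0 \oplus \mf g_1 \oplus \cdots \oplus \mf g_i$.

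For the second part, let $X \in \mf g_i$. Since $X \in \tilde{\mf g}_i$, applying any $i+1$ operators $\ad_{U_1}\cdots\ad_{U_{i+1}}$ yields zero; equivalently, $\ad_{U_1} \cdots \ad_{U_i}(X)$ lies in the kernel of every $\ad_U$ with $U \in \mf u$, which is precisely $Z_{\mf g}(\mf u) = \mf g_0$. It remains only to exhibit at least one choice of $U_1, \dots, U_i$ for which this image is nonzero. Suppose, for contradiction, that $\ad_{U_1}\cdots\ad_{U_i}(X) = 0$ for \emph{every} tuple $(U_1, \dots, U_i) \in \mf u^i$. Then by definition $X \in \tilde{\mf g}_{i-1}$. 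Combined with $X \in \mf g_i$, the complementarity $\tilde{\mf g}_i = \tilde{\mf g}_{i-1} \oplus \mf g_i$ forces $X = 0$, contradicting our hypothesis.

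The main obstacle, if any, is the first step: making sure that the filtration really does stabilize at $\mf g$, which requires invoking the appropriate Engel-type statement for the commuting family $\ad(\mf u)$ rather than a single nilpotent operator. Once one recognizes that commutativity of $\mf u$ is what allows uniform simultaneous triangularization, the rest is linear algebra of filtrations and their complements, and the non-vanishing statement for $X \in \mf g_i$ reduces to the very definition of $\mf g_i$ as a complement to $\tilde{\mf g}_{i-1}$.
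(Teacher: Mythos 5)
Your proposal is correct and follows essentially the same route as the paper's proof: the filtration $\tilde{\mf g}_i$ stabilizes because a product of $\dim\mf g$ commuting nilpotent endomorphisms vanishes, and the non-vanishing claim for $X\in\mf g_i\setminus\{0\}$ is exactly the statement that $X\notin\tilde{\mf g}_{i-1}$. You simply spell out the simultaneous strict upper-triangularization (Engel-type) input and phrase the second part as an explicit contradiction, which the paper leaves terse.
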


\begin{proof}
The claim that $\mf g = \bigoplus_{i=0}^m \mf g_i$ follows from the fact that given any collection of commuting nilpotent transformations on $\R^N$, the composition of any $N$ such transformations must be trivial. To see the second part of the lemma, notice that if $X \in \mf g_i$, then $X \in \tilde{\mf g}_i$, but $X \not\in \tilde{\mf g}_{i-1}$. That $X \in \tilde{\mf g}_i$ implies that $\ad_{U_1}\dots\ad_{U_i}(X) \in \mf g_0$ by definition of $\tilde{\mf g}_i$ and $\mf g_0$. Since $X \not\in \tilde{\mf g}_{i-1}$ implies that $U_1,\dots,U_i$ can be chosen to make the element nonzero.\end{proof}

If $V$ is a vector space, let $\Sym^k(V)$ denote the $k$-fold symmetric tensor power of $V$. Define the following family of linear maps for $i \ge 1$:
\[
\begin{array}{rcccl}
 \Phi_i & : & \Sym^i(\mf u) \otimes {\mf g_0}^*  &\to  & {\mf g_i}^*\\
 & & U_1 \otimes \dots \otimes  U_i \otimes \psi & \mapsto& \psi \of \ad_{U_1}\dots\ad_{U_i}.
\end{array}
\]
Note that the map is well-defined from the symmetric tensor power because $\ad$ is bilinear and $\mf u$ is abelian.

\begin{lemma}
\label{lem:annihilator}
Suppose that $W \subset V^*$ is a linear subspace. Let $V_1 = \bigcap_{\theta \in W} \ker \theta \subset V$. Then $\dim(W) = \dim(V) - \dim(V_1)$.
\end{lemma}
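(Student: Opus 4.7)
This is a standard rank--nullity statement about annihilators, and the plan is to reduce the (possibly infinite) intersection defining $V_1$ to a finite one coming from a basis of $W$, and then conclude by dimension counting in $V^{\ast}$.

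First I would pick a basis $\theta_1,\dots,\theta_k$ of $W$, where $k = \dim W$. Since every element of $W$ is a linear combination of the $\theta_i$, a vector $v \in V$ is annihilated by all of $W$ if and only if it is annihilated by each $\theta_i$. Hence
\[
V_1 \;=\; \bigcap_{\theta \in W} \ker \theta \;=\; \bigcap_{i=1}^{k} \ker \theta_i,
\]
which turns the defining condition of $V_1$ into a finite system of linear equations.

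Next I would extend $\theta_1,\dots,\theta_k$ to a basis $\theta_1,\dots,\theta_n$ of the full dual space $V^{\ast}$ (possible since $\dim V^{\ast} = \dim V = n$), and consider the evaluation map $T \colon V \to \K^{n}$ given by $T(v) = (\theta_1(v),\dots,\theta_n(v))$. Because $\theta_1,\dots,\theta_n$ is a basis of $V^{\ast}$, the map $T$ is precisely the coordinate isomorphism with respect to the dual basis of $V$, so $T$ is a linear isomorphism. Under $T$, the subspace $V_1$ corresponds exactly to $\{0\}^{k} \times \K^{n-k}$, and therefore
\[
\dim V_1 \;=\; n - k \;=\; \dim V - \dim W,
\]
which rearranges to the claimed identity.

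There is no genuine obstacle here; the only step worth flagging is the reduction to a basis of $W$, which avoids having to quantify over the whole subspace. Once the intersection is expressed through finitely many functionals drawn from a basis, everything follows immediately from the isomorphism $T$ (equivalently, from rank--nullity applied to the linear map $V \to W^{\ast}$, $v \mapsto (\theta \mapsto \theta(v))$, whose kernel is $V_1$ and whose image is all of $W^{\ast}$ since the $\theta_i$ are linearly independent).
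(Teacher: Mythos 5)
Your proof is correct and self-contained. The paper's own argument is shorter but less explicit: it observes that under the canonical identification $V \cong (V^*)^*$, the subspace $V_1$ is exactly the annihilator of $W$ inside $(V^*)^*$, and then cites the standard formula $\dim(A) + \dim(\operatorname{Ann}(A)) = \dim(B)$ for a subspace $A \subset B$ as a known theorem, with no further argument. What you have written is, in effect, a proof of that very formula: choosing a basis $\theta_1,\dots,\theta_k$ of $W$, extending to a basis $\theta_1,\dots,\theta_n$ of $V^*$, and reading off $V_1$ in the dual coordinates as $\{0\}^k \times \K^{n-k}$ is exactly how one establishes the annihilator-dimension identity from scratch. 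So the two routes coincide mathematically; the paper buys brevity by deferring to a named result, while you buy self-containment and, in your closing remark, the arguably cleanest formulation: rank--nullity applied to the surjection $V \to W^*$, $v \mapsto (\theta \mapsto \theta(v))$, whose kernel is $V_1$. Both are valid; neither has a gap.
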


\begin{proof}
This is a standard theorem in linear algebra: notice that $V_1 \subset V = (V^*)^*$ is exactly the annihilator of $W$. The claim is then exactly the statement that for any vector space $B$ and any subspace $A \subset B$, $\dim(A) + \dim(\operatorname{Ann}(A)) = \dim(B)$, where $\operatorname{Ann}(A)$ denotes the annihilator of $A$.




\end{proof}

\begin{lemma}
\label{lem:surjective}
The maps $\Phi_i$ are surjective for $i \ge 1$.
\end{lemma}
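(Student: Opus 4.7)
The plan is to deduce surjectivity of $\Phi_i$ from the annihilator computation provided in Lemma \ref{lem:annihilator}, applied to the subspace $W = \operatorname{Im}(\Phi_i) \subset \mf g_i^*$. Concretely, I would compute
\[ V_1 = \bigcap_{\theta \in \operatorname{Im}(\Phi_i)} \ker \theta \subset \mf g_i, \]
show that $V_1 = \{0\}$, and then read off that $\dim \operatorname{Im}(\Phi_i) = \dim \mf g_i = \dim \mf g_i^*$, which forces $\Phi_i$ to be onto.

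First I would unwind the definition of $V_1$: an element $X \in \mf g_i$ lies in $V_1$ if and only if $\psi(\ad_{U_1}\cdots\ad_{U_i}(X)) = 0$ for every $\psi \in \mf g_0^*$ and all $U_1,\dots,U_i \in \mf u$. Before taking $\psi$ into account, I would note that $\ad_{U_1}\cdots\ad_{U_i}(X)$ does indeed land in $\mf g_0$ for $X \in \mf g_i$: since $\mf g_i \subset \tilde{\mf g}_i$, applying one further $\ad_V$ with $V \in \mf u$ kills the expression, which places $\ad_{U_1}\cdots\ad_{U_i}(X)$ in $\tilde{\mf g}_0 = \mf g_0$ by the very definition \eqref{eq:basg}. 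So the pairing with $\psi \in \mf g_0^*$ is meaningful.

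Next, because $\psi$ ranges over all of $\mf g_0^*$, the vanishing of $\psi(\ad_{U_1}\cdots\ad_{U_i}(X))$ for every $\psi$ is equivalent to $\ad_{U_1}\cdots\ad_{U_i}(X) = 0$. Hence
\[ V_1 = \set{ X \in \mf g_i : \ad_{U_1}\cdots\ad_{U_i}(X) = 0 \text{ for all } U_1,\dots,U_i \in \mf u} = \mf g_i \cap \tilde{\mf g}_{i-1}, \]
using the definition of $\tilde{\mf g}_{i-1}$. Since $\mf g_i$ was chosen as a complement of $\tilde{\mf g}_{i-1}$ inside $\tilde{\mf g}_i$, this intersection is trivial, so $V_1 = \{0\}$. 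Lemma \ref{lem:annihilator} then gives $\dim \operatorname{Im}(\Phi_i) = \dim \mf g_i$, proving surjectivity.

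I do not anticipate any genuine obstacle here; the only subtle point is to verify carefully that the image of $\ad_{U_1}\cdots\ad_{U_i}$ on $\mf g_i$ actually lands in $\mf g_0$ (so that the contraction against $\psi \in \mf g_0^*$ makes sense) and to keep track of the distinction between $\tilde{\mf g}_{i-1}$ and $\mf g_{i-1}$, so that the complementarity clause in the construction of the $\mf g_i$ is invoked correctly. Once this bookkeeping is done, the argument is a one-line application of Lemma \ref{lem:annihilator}.
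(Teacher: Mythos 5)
Your proof is correct and takes essentially the same route as the paper: both reduce to showing $\bigcap_{\theta \in \operatorname{Im}(\Phi_i)} \ker\theta = \{0\}$ and then invoke Lemma \ref{lem:annihilator}. The only difference is cosmetic: the paper cites Lemma \ref{lem:nothing-killed} for the triviality of this intersection, whereas you re-derive the same fact inline by identifying $V_1 = \mf g_i \cap \tilde{\mf g}_{i-1}$ and invoking the complementarity clause in the construction of $\mf g_i$.
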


\begin{proof}
Notice that Lemma \ref{lem:nothing-killed} implies that for every $X \in \mf g_i\setminus \set{0}$, there exists $\theta \in \Im(\Phi_i)$ such that $\theta(X) \not= 0$. That is, $\bigcap_{\theta \in \Im(\Phi_i)} \ker \theta = \set{0}$. By Lemma \ref{lem:annihilator}, $\Im(\Phi_i) = \mf g_i^*$, as claimed.
\end{proof}

Fix a basis $\set{U_1,\dots,U_k} \subset \mf u$ of $U$. Let $\pi_i : \mf g \to \mf g_i$ denote the projection onto $\mf g_i$ determined by the splitting in Lemma \ref{lem:nothing-killed}.

\begin{lemma}\label{lem:projectionDegree}
If $U = \sum_{j=1}^k s_jU_j \in \mf u$ and $X \in \mf g_\ell$, 
 then $\pi_i( \Ad(\exp(U))X)$ is a polynomial in $s_1,\dots,s_k$ of degree $\max\{\ell -i,0\}$ taking values in $\mf g_i$ whose coefficients depend linearly on $X$.
\end{lemma}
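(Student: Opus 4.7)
The plan is to expand $\Ad(\exp(U))X$ using the nilpotency built into $\mf u$, track where each term of the expansion sits in the filtration $\tilde{\mf g}_\bullet$, and then project. By Lemma \ref{lem:BasicLieTheory},
\[ \Ad(\exp(U))X = \exp(\ad_U)X = \sum_{n=0}^{\infty} \frac{1}{n!} \ad_U^n(X), \]
and since $\mf u$ is abelian the operators $\ad_{U_j}$ commute, so writing $U = \sum_j s_j U_j$ and expanding multinomially,
\[ \ad_U^n(X) = \sum_{|\alpha|=n} \binom{n}{\alpha}\, s^\alpha\, \ad_{U_1}^{\alpha_1}\cdots \ad_{U_k}^{\alpha_k}(X), \]
with $s^\alpha = s_1^{\alpha_1}\cdots s_k^{\alpha_k}$. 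Linearity of the coefficients in $X$ is immediate from this formula.

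Next I would invoke the defining filtration \eqref{eq:basg}: since $X \in \mf g_\ell \subset \tilde{\mf g}_\ell$, any composition of $n$ commuting adjoints from $\mf u$ lands $X$ in $\tilde{\mf g}_{\ell - n}$, with the convention $\tilde{\mf g}_j = \{0\}$ for $j < 0$. Hence $\ad_U^n(X) = 0$ for $n > \ell$ and the expansion is a finite polynomial in $s$ of degree at most $\ell$. Applying $\pi_i$ annihilates everything outside $\mf g_i$; since Lemma \ref{lem:nothing-killed} yields $\tilde{\mf g}_{\ell-n} = \bigoplus_{j \le \ell - n} \mf g_j$, the component $\pi_i(\ad_U^n(X))$ vanishes as soon as $\ell - n < i$. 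This reduces the $\pi_i$-projection to the terms $n = 0, 1, \dots, \ell - i$ (and to zero altogether when $i > \ell$), proving the degree bound $\max\{\ell - i, 0\}$ and giving a $\mf g_i$-valued polynomial linear in $X$. The degenerate cases are immediate: when $i = \ell$ only $n = 0$ survives and the polynomial is the constant $X$, while for $i > \ell$ it vanishes identically.

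To upgrade the bound to the exact degree $\max\{\ell - i, 0\}$ for $X \ne 0$ and $\ell > i$, I would examine the leading coefficient $\tfrac{1}{(\ell-i)!}\pi_i(\ad_U^{\ell-i}(X))$, a homogeneous polynomial of degree $\ell - i$ in $s$, and check that it is nonzero as a function on $\mf u$. For this, invoke Lemma \ref{lem:nothing-killed} to obtain $V_1, \dots, V_\ell \in \mf u$ with $\ad_{V_1}\cdots\ad_{V_\ell}(X) \in \mf g_0 \setminus \{0\}$, set $Y = \ad_{V_{i+1}}\cdots\ad_{V_\ell}(X) \in \tilde{\mf g}_i$ and decompose $Y = Y_i + Y_{i-1} + \cdots + Y_0$ along $\tilde{\mf g}_i = \bigoplus_{j=0}^i \mf g_j$. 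The components $Y_j$ with $j < i$ are sent by $\ad_{V_1}\cdots\ad_{V_i}$ into $\tilde{\mf g}_{j-i} = \{0\}$, so the assumed nonvanishing forces $Y_i = \pi_i(Y) \ne 0$. Expanding each $V_r$ in the basis $\{U_1, \dots, U_k\}$ then exhibits by multilinearity a monomial $\ad_{U_{r_1}}\cdots\ad_{U_{r_{\ell-i}}}(X)$ with nonzero $\pi_i$-component, which is precisely a nonvanishing coefficient of some $s^\alpha$ with $|\alpha| = \ell - i$. The whole argument is essentially bookkeeping on the filtration; the main conceptual inputs are the descent rule from \eqref{eq:basg} and the top-degree nondegeneracy supplied by Lemma \ref{lem:nothing-killed}, so I do not anticipate a substantive obstacle.
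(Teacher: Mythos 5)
Your proof is correct and follows the same core route as the paper: write $\Ad(\exp(U))X = \exp(\ad_U)X$, use commutativity of $\mf u$ to expand $\ad_U^n$ multinomially in $s_1,\dots,s_k$, and track the terms through the filtration $\tilde{\mf g}_\bullet$ to conclude that $\pi_i$ annihilates $\ad_U^n(X)$ once $n > \ell - i$. The paper's proof stops essentially at that point (and is slightly imprecise in writing $\ad_U^{\ell-i}(X)\in\mf g_i$, where $\tilde{\mf g}_i$ is meant), so strictly speaking it only establishes the degree upper bound $\max\{\ell-i,0\}$. Your final paragraph genuinely adds something: using Lemma \ref{lem:nothing-killed} to choose $V_1,\dots,V_\ell\in\mf u$, observing that $\ad_{V_1}\cdots\ad_{V_i}$ kills the $\mf g_j$-components of $Y=\ad_{V_{i+1}}\cdots\ad_{V_\ell}(X)$ for $j<i$ and hence forces $\pi_i(Y)\neq 0$, and then passing to the $U_j$-basis by multilinearity to exhibit a nonvanishing top-degree monomial coefficient. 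That is a correct and clean upgrade to exact degree which the paper leaves implicit. It is worth knowing, though, that the exact-degree claim is not really load-bearing later: in Lemma \ref{lem:good-basis} exactness of $\deg p_{j,i}$ already follows from the dual-basis construction (the pairing $\phi_j(Y_j)=1$ directly produces a nonzero degree-$i$ coefficient), and Lemma \ref{lem:BowenLieAlgebra} only uses the upper bound in its second half. So your extra work is sound and justifies the statement as literally written, but the paper's shorter proof suffices for what the lemma is actually used for.
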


\begin{proof}
Notice that $\Ad(\exp(U)) = \exp(\ad_U)=\sum \frac{1}{s!}{\ad_U}^s$, and by definition of $\mf g_\ell$, if $i\leq\ell$ then  ${\ad_U}^{\ell-i}(X) \in \mf g_{i}$. Notice that $\ad_U^{\ell-i} = (s_1\ad_{U_1}+\dots+s_k\ad_{U_k})^{\ell-i}$, which can be expanded with the usual binomial coefficients since $U_1, \dots, U_k$ commute. Therefore, we know that the coefficient of $s_1^{m_1}s_2^{m_2}\dots s_k^{m_k}$ is exactly a multiple (depending only on binomial coefficients) of $\ad_{U_1}^{m_1}\dots \ad_{U_k}^{m_k}(X)$. Projecting these into each coordinate gives the desired functional.
\end{proof}

Let $n_i = \dim(\mf g_i)$, and fix a basis $\theta_1,\dots,\theta_{n_0}$ of $\mf g_0^*$. If $p(s_1,\dots,s_k)$ is a degree $i$ polynomial, recall that it can be written as a sum of homogeneous polynomials $p = \sum_{j=0}^i p^{(j)}$, where each $p^{(j)}$ is a homogeneous polynomial of degree $j$, and that this expression is unique.

\begin{lemma}
\label{lem:good-basis}
Let $U = \sum_{j=1}^k s_j U_j \in \mf u$. For each $i$, there exists a basis $\set{Y_1,\dots,Y_{n_i}} \subset \mf g_i$ and an $n_i$-tuple $(\alpha_1,\dots,\alpha_{n_i})$ such that $1 \le \alpha_j \le n_0$, $p_{j,i} = \theta_{\alpha_j}(\pi_0(\Ad(\exp(U))Y_j))$ is a degree $i$ polynomial in $s_1,\dots,s_k$. Furthermore, for every $\alpha\in \{1,\dots ,n_0\}$, if $I_\alpha = \set{ j : \alpha_j = \alpha}$, then $\set{p_{j,i}^{(i)} : j \in I_\alpha}$ is a linearly independent set of polynomials.

\end{lemma}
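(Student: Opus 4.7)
The plan is to read off the leading homogeneous part of $p_{j,i}$ explicitly, encode it via evaluation-type linear maps $\Xi_\alpha$, and then construct the basis by a flag of kernels of these maps.

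First I would identify the degree $i$ piece of $p_{j,i}$ directly. Since $\Ad(\exp(U)) = \exp(\ad_U)$ and, for $Y \in \mf g_i$, $U\in\mf u$, the definition of $\tilde{\mf g}_i$ gives $\ad_U^{i+1} Y = 0$, one has $\Ad(\exp(U))Y = \sum_{r=0}^{i}\tfrac{1}{r!}\ad_U^{r}Y$. Each summand is homogeneous of degree $r$ in $s_1,\dots,s_k$, and for $r=i$ the vector $\tfrac{1}{i!}\ad_U^{i}Y$ already lies in $\tilde{\mf g}_0 = \mf g_0$ so $\pi_0$ acts as the identity on it. Hence
$$p_{j,i}^{(i)}(s_1,\dots,s_k) = \frac{1}{i!}\theta_{\alpha_j}\bigl(\ad_{U(s)}^{\,i} Y_j\bigr), \qquad U(s) = \sum_{\ell=1}^{k} s_\ell U_\ell,$$
which is a homogeneous polynomial of degree $i$ in $s_1,\dots,s_k$ (possibly zero a priori).

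Second, I would introduce for each $\alpha\in\{1,\dots,n_0\}$ the linear map $\Xi_\alpha : \mf g_i \to \Sym^i(\mf u^*)$ defined by $\Xi_\alpha(Y)(U) := \theta_\alpha(\ad_U^{i}Y)$, and establish the key claim $\bigcap_{\alpha=1}^{n_0}\ker(\Xi_\alpha) = \{0\}$. If $Y$ lies in every kernel, then $\theta_\alpha(\ad_U^{i}Y)=0$ for all $\alpha$ and all $U$; since $\{\theta_\alpha\}$ is a basis of $\mf g_0^*$ and $\ad_U^{i}Y \in \mf g_0$, this forces the degree $i$ homogeneous polynomial $U \mapsto \ad_U^{i}Y$ to vanish identically on $\mf u$. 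By polarization, using that commutativity of $\mf u$ gives $\ad_{U_{\sigma(1)}}\cdots\ad_{U_{\sigma(i)}} = \ad_{U_1}\cdots\ad_{U_i}$ for every permutation $\sigma$, this upgrades to $\ad_{U_1}\cdots\ad_{U_i}Y = 0$ for all $U_1,\dots,U_i\in\mf u$, contradicting Lemma \ref{lem:nothing-killed} unless $Y=0$.

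Third, I would build the basis by a flag of kernels: set $L_0 := \mf g_i$ and inductively $L_\alpha := L_{\alpha-1}\cap\ker(\Xi_\alpha)$ for $\alpha=1,\dots,n_0$, so that $L_{n_0}=\{0\}$ by the previous step. For each $\alpha$ choose $\{Y_j : j\in I_\alpha\} \subset L_{\alpha-1}$ whose classes form a basis of the quotient $L_{\alpha-1}/L_\alpha$, and set $\alpha_j := \alpha$; the union over $\alpha$ is then a basis of $\mf g_i$. Since $Y_j\in L_{\alpha_j-1}\setminus L_{\alpha_j}$, we have $\Xi_{\alpha_j}(Y_j)\neq 0$, so $p_{j,i}^{(i)}\neq 0$ and $p_{j,i}$ genuinely has degree $i$. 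Moreover, $\Xi_\alpha$ descends to an injection $L_{\alpha-1}/L_\alpha \hookrightarrow \Sym^i(\mf u^*)$, so the family $\{\Xi_\alpha(Y_j)\}_{j\in I_\alpha}$, and therefore $\{p_{j,i}^{(i)}\}_{j\in I_\alpha}$, is linearly independent. The main obstacle I expect is the polarization step for $\bigcap_\alpha\ker(\Xi_\alpha)=0$: one must carefully pass from the polynomial identity $\ad_U^{i}Y\equiv 0$ to the multilinear identity $\ad_{U_1}\cdots\ad_{U_i}Y=0$, which is precisely where commutativity of $\mf u$ is essential and which then allows Lemma \ref{lem:nothing-killed} to conclude $Y=0$.
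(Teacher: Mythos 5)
Your proof is correct, and the underlying algebraic input is the same as the paper's (the nondegeneracy of the pairing $\mf g_i \times (\Sym^i(\mf u)\otimes\mf g_0^*) \to \R$ coming from Lemma \ref{lem:nothing-killed}), but the construction of the basis is genuinely different. The paper argues \emph{dually}: it uses the surjectivity of $\Phi_i$ (Lemma \ref{lem:surjective}) to pick a subcollection of $\Phi_i(\mc B)$ that is a basis of $\mf g_i^*$, and then takes $\set{Y_j}$ to be its dual basis, with $\alpha_j$ read off from the $\theta_\alpha$-component of the corresponding tensor in $\mc B$. You argue \emph{primally}: you introduce the restriction maps $\Xi_\alpha : \mf g_i \to \Sym^i(\mf u^*)$, observe that their kernels have trivial common intersection (your key claim, which is essentially a restatement of Lemma \ref{lem:nothing-killed} via polarization), and then build the basis from a flag $\mf g_i = L_0 \supset L_1 \supset \dots \supset L_{n_0} = \{0\}$ by taking coset representatives. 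Both routes require the same identification of the degree-$i$ homogeneous part $p_{j,i}^{(i)}(s) = \tfrac{1}{i!}\theta_{\alpha_j}(\ad_{U(s)}^i Y_j)$, and once in place, linear independence within each $I_\alpha$ follows in the paper from the observation that the monomial associated to each $Y_j$ appears in exactly one $p_{j,i}^{(i)}$, while in your version it follows immediately from the injectivity of the induced map $L_{\alpha-1}/L_\alpha \hookrightarrow \Sym^i(\mf u^*)$. Your approach is arguably cleaner on two points: it makes the nonvanishing of $p_{j,i}^{(i)}$ (hence that $p_{j,i}$ has degree exactly $i$) an explicit consequence of $Y_j \notin L_{\alpha_j}$, whereas the paper appeals somewhat loosely to Lemma \ref{lem:projectionDegree}; and it replaces the choice of an arbitrary subcollection of $\Phi_i(\mc B)$ with a deterministic flag procedure (once the ordering of $\theta_1,\dots,\theta_{n_0}$ is fixed). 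The polarization step you flagged as the main obstacle is indeed essential and is correct here: since $\mf u$ is abelian, $(U_1,\dots,U_i)\mapsto\ad_{U_1}\cdots\ad_{U_i}Y$ is a symmetric multilinear map, so its vanishing on the diagonal implies its vanishing identically, which then contradicts Lemma \ref{lem:nothing-killed} unless $Y=0$.
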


\begin{proof}
Notice that $\mc B = \set{U_{\ell_1} \otimes \dots \otimes U_{\ell_i} \otimes \theta_\alpha : 1 \le \ell_1 \le \dots \ell_i \le k, 1\le \alpha \le n_0}$ is a basis of $\Sym^i(\mf u) \otimes \mf g_0^*$. Since $\Phi_i$ is surjective, $\Phi_i(\mc B)$ spans $\mf g_i^*$. Therefore, we may choose a subcollection which forms a basis of $\mf g_i^*$. Let $\set{Y_j}$ denote the corresponding dual basis. Then by construction, each element $Y_j$ has an associated $\alpha_j$ which is the last component of the tensor product of the element of $\mc B$. The fact that the degree of $p_{j,i}$ equals $i$ follows from Lemma \ref{lem:projectionDegree}.

Now we prove linear independence. We prove this by considering the homogeneous summand $p_{j,i}^{(i)}$ of each $p_{j,i}$ of  degree $i$. Fix $\alpha$ and consider the polynomials $p_{j,i}^{(i)}$ 
for which $\alpha_j  =\alpha$. Suppose that there were some linear relations $\sum \tau_j p_{j,i}^{(i)} = 0$ for some $\tau_j \in \R$. Notice that since $\alpha$ is fixed, each $Y_j$ has a unique tensor $U_{\ell_1} \otimes \dots \otimes U_{\ell_i}$ associated to it, giving a unique associated monomial term (since it is a symmetric tensor power). Since $Y_j$ is dual to the images of such elements under $\Phi_i$, each such monomial term can appear at most once in the collection $\set{p_{j,i}^{(i)}}$. This implies that $\tau_j = 0$ for all $j$, and that the polynomials are linearly independent.
\end{proof}

\begin{definition}[Generalized Chain Basis]\label{def:generalizedChainBasis}
A generalized chain basis of $\mathfrak{g}$ is obtained by choosing a basis of $\mf g_i$ as in Lemma \ref{lem:good-basis}. We denote it by
\[\set{Y_{j,i} : 0 \le i \le m, 1 \le j \le n_i}.\]
\end{definition}

 Let $\set{p_{j,i}}$ denote the corresponding set of polynomials. We prove the following abstract lemma:

\begin{lemma}
\label{lem:polynomial-decay}
Let $g_1,\dots,g_\ell$ be a collection of linearly independent homogeneous polynomials in $s_1,\dots,s_k$, where $g_i$ has degree $d_i$, and define $V = \vspan \set{g_1,\dots,g_\ell}$. There exists $C(g_1,\dots,g_\ell) > 0$ such that if $\abs{\sum_{i=1}^\ell x_ig_i(s)} < \ve$ for all $s \in [-R,R]^k$, then $\abs{x_i} < C\ve R^{-d_i}$. Similarly, if $\abs{x_i} < \ve R^{-d_i}$, then $\abs{\sum_{i=1}^\ell x_ig_i(s)} < C\ve$ for all $s \in [-R,R]^k$.
\end{lemma}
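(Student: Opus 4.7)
The plan is to exploit homogeneity to reduce both statements to the case $R = 1$ and then invoke a compactness/equivalence-of-norms argument on the finite-dimensional space $V$.

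For the first statement, I would rescale: set $s = Rt$ with $t \in [-1,1]^k$. By homogeneity of degree $d_i$, $g_i(Rt) = R^{d_i} g_i(t)$, so if we let $y_i = x_i R^{d_i}$ the hypothesis becomes
\[ \Bigl|\sum_{i=1}^\ell y_i g_i(t)\Bigr| < \epsilonp \quad \text{for all } t \in [-1,1]^k. \]
Thus it suffices to prove: there is $C > 0$, depending only on $g_1,\dots,g_\ell$, such that this sup-norm bound on $[-1,1]^k$ forces $|y_i| < C\epsilonp$ for each $i$. Undoing the substitution then yields $|x_i| < C\epsilonp R^{-d_i}$.

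To prove this reduced statement, I would consider the linear map
\[ \Psi : \R^\ell \to C([-1,1]^k), \qquad \Psi(y_1,\dots,y_\ell) = \sum_{i=1}^\ell y_i g_i, \]
equipping $\R^\ell$ with the $\ell^\infty$-norm and $C([-1,1]^k)$ with the sup-norm. Linear independence of $g_1,\dots,g_\ell$ says exactly that $\Psi$ is injective. Since $\R^\ell$ is finite-dimensional, the unit sphere $S = \{y : \max_i |y_i| = 1\}$ is compact, so
\[ c := \inf_{y \in S} \sup_{t \in [-1,1]^k} \Bigl|\sum_{i=1}^\ell y_i g_i(t)\Bigr| \]
is attained and strictly positive (any minimizer is nonzero and $\Psi$ is injective). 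By homogeneity of $\Psi$ in $y$, this gives $\sup_{t} |\sum y_i g_i(t)| \ge c \max_i |y_i|$ for every $y \in \R^\ell$. So if the left-hand side is less than $\epsilonp$, then $\max_i |y_i| < \epsilonp / c$; take $C \ge 1/c$.

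The second statement is the much easier direction: set $M = \max_i \sup_{t \in [-1,1]^k} |g_i(t)|$. Given $|x_i| < \epsilonp R^{-d_i}$, homogeneity yields $|g_i(s)| \le M R^{d_i}$ for $s \in [-R,R]^k$, hence
\[ \Bigl|\sum_{i=1}^\ell x_i g_i(s)\Bigr| \le \sum_{i=1}^\ell |x_i|\cdot M R^{d_i} < \ell M \epsilonp, \]
so enlarging the $C$ from the first part to also exceed $\ell M$ handles both directions with a single constant. I do not anticipate a serious obstacle here; the only care needed is to make sure the constant $C$ depends only on the polynomials $g_1,\dots,g_\ell$ (and in particular not on $R$ or $\epsilonp$), which both steps of the argument clearly respect.
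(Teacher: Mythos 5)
Your argument is correct and is essentially the paper's: you use the same rescaling $s\mapsto Rs$ to reduce to the unit cube, and the compactness argument you spell out on the unit sphere of $\R^\ell$ is exactly the standard proof of the equivalence-of-norms fact the paper invokes directly (comparing the coefficient sup-norm with the sup-norm on $[-1,1]^k$). The only difference is cosmetic — you prove the norm equivalence by hand rather than citing it.
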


\begin{proof}
We define two norms on $V$. Let $g = \sum_{i=1}^n x_i g_i$. The first norm is determined by the coefficients: $\norm{g}_1 = \max \abs{x_i}$. Notice that this is a norm since we have assumed that $g_i$ are linearly independent. The second norm is determined by its supremum on $[-1,1]^k$:
\[ \norm{g}_2 = \sup_{s \in [-1,1]^k} \abs{g(s)}. \]

Notice that $g_2$ is a norm since a polynomial is 0 if and only if it is 0 on an open set. Since any two norms on a finite dimensional space are equivalent, we have that there exists $C > 0$ such that $\norm{g}_1 \le C\norm{g}_2$ and $\norm{g}_2 \le C \norm{g}_1$. Now observe that:
\[ \sup_{s \in [-R,R]^k} \abs{g(s)} = \sup_{s \in [-1,1]^k} \abs{g(Rs)},\]
and that by the assumption of homogeneity, the coefficients of $g(Rs)$ in the polynomials $g_i$ are exactly the original coefficients of $g(s)$ scaled by $R^{d_i}$. This gives the result.
\end{proof}

\begin{corollary}
\label{cor:polynomial-decay}
Let $g_1,\dots,g_\ell$ be a collection of polynomials in $s_1,\dots,s_k$, where $g_i$ has degree $d_i$ and for each $d$ the elements of the set $\{g^{(d_i)}_i\;:\; d_i=d\}$ are linearly independent. Define $V = \vspan \set{g_1,\dots,g_\ell}$. Then there exists $C(g_1,\dots,g_\ell) > 0$ and $R_0 > 0$ such that if $R \ge R_0$, and $\abs{\sum_{i=1}^\ell x_ig_i(s)} < \ve$ for all $s \in [-R,R]^k$, then $\abs{x_i} < C\ve R^{-d_i}$. Similarly, if $\abs{x_i} < \ve R^{-d_i}$, then $\abs{\sum_{i=1}^\ell x_ig_i(s)} < C\ve$ for all $s \in [-R,R]^k$.
\end{corollary}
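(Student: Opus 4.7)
The plan is to reduce to Lemma \ref{lem:polynomial-decay} by decomposing the linear combination into its homogeneous components and then processing the degrees from top down. For the forward direction, I would first write $\sum_{i=1}^\ell x_i g_i = \sum_{j=0}^D f_j$ where $D = \max_i d_i$ and $f_j = \sum_{i\,:\,d_i \ge j} x_i g_i^{(j)}$ is homogeneous of degree $j$. Substituting $s = \lambda t$ with $t \in [-1,1]^k$ and $\lambda \in [-R,R]$ turns the hypothesis into
\[
\Bigl| \sum_{j=0}^D \lambda^j f_j(t) \Bigr| < \varepsilon \qquad \text{for all } (t,\lambda) \in [-1,1]^k \times [-R,R].
\]
Fixing $t$ and applying Lemma \ref{lem:polynomial-decay} to the one-variable monomials $\{\lambda^j\}_{j=0}^D$, which are linearly independent and homogeneous, yields $|f_j(t)| \le C \varepsilon R^{-j}$ uniformly on $[-1,1]^k$.

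Next I would perform a descending induction on $j = D, D-1, \dots, 0$ to show that $|x_i| \le C_j \varepsilon R^{-d_i}$ for $i \in I_j := \{i : d_i = j\}$. The base case $j = D$ is immediate: since $f_D = \sum_{i \in I_D} x_i g_i^{(D)}$ and $\{g_i^{(D)}\}_{i \in I_D}$ is linearly independent by hypothesis, Lemma \ref{lem:polynomial-decay} applied at scale $R'=1$ on $[-1,1]^k$ converts $|f_D(t)| \le C\varepsilon R^{-D}$ into $|x_i| \le C' \varepsilon R^{-D}$. For the inductive step, rewrite
\[
\sum_{i \in I_j} x_i g_i^{(j)}(t) = f_j(t) - \sum_{\ell > j} \sum_{i \in I_\ell} x_i g_i^{(j)}(t),
\]
bound $|f_j(t)| \le C\varepsilon R^{-j}$, and use the inductive hypothesis $|x_i| \le C_\ell \varepsilon R^{-\ell}$ for $i \in I_\ell$ together with uniform bounds on $|g_i^{(j)}|$ over $[-1,1]^k$ to estimate each error term by a constant multiple of $\varepsilon R^{-\ell}$. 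For $R \ge R_0 := 1$, the inequality $R^{-\ell} \le R^{-j}$ for $\ell > j$ absorbs all of these into a single $C'' \varepsilon R^{-j}$. Then Lemma \ref{lem:polynomial-decay} applied once more to the linearly independent homogeneous family $\{g_i^{(j)}\}_{i \in I_j}$ yields $|x_i| \le C_j \varepsilon R^{-j}$, completing the induction.

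The converse direction is a direct estimate. For $R \ge 1$ and $s \in [-R,R]^k$, each piece satisfies $|g_i^{(j)}(s)| \le M_{i,j} R^j$, so $|g_i(s)| \le \sum_{j \le d_i} M_{i,j} R^j \le M_i R^{d_i}$. If $|x_i| \le \varepsilon R^{-d_i}$, then $|x_i g_i(s)| \le M_i \varepsilon$, and summing gives the claimed bound $|\sum x_i g_i(s)| \le C \varepsilon$.

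The principal difficulty is the book-keeping in the inductive step: one must verify that the contributions of $x_i$ with $d_i > j$ to the lower-degree homogeneous component $f_j$ really do get absorbed uniformly in $i$ and $j$. This is exactly what forces the condition $R \ge R_0$ (any $R_0 \ge 1$ suffices), since only then does $R^{-\ell} \le R^{-j}$ for $\ell > j$ let us bundle all auxiliary error terms into the dominant $\varepsilon R^{-j}$ scale before invoking Lemma \ref{lem:polynomial-decay} to invert back to the coefficients $x_i$.
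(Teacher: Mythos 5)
Your proof is correct, and its skeleton — a top-down induction on degree invoking Lemma \ref{lem:polynomial-decay} at each level — matches the paper's. The mechanism for isolating the homogeneous components, however, is genuinely different. The paper embeds $V$ into $\mathcal{P}(d)$, the space of all polynomials of degree at most $d = \max d_i$, and applies Lemma \ref{lem:polynomial-decay} to the full monomial basis, bounding each monomial coefficient of $\sum x_i g_i$ by $C\varepsilon R^{-j}$ ($j$ the monomial's degree); it then recovers the $x_i$ with $d_i = j$ by subtracting the already-controlled higher-degree contributions and inverting the injective coefficient map $\pi_j \circ \psi|_{V_j}$. You instead substitute $s = \lambda t$ with $(t,\lambda) \in [-1,1]^k \times [-R,R]$, rewriting the hypothesis as $\bigl|\sum_j \lambda^j f_j(t)\bigr| < \varepsilon$; a single application of the one-variable case of Lemma \ref{lem:polynomial-decay} (to $\{1,\lambda,\dots,\lambda^D\}$) at each fixed $t$ gives $|f_j(t)| \le C\varepsilon R^{-j}$ uniformly on $[-1,1]^k$, and a second application of the lemma at scale $R' = 1$ to the linearly independent family $\{g_i^{(j)} : i \in I_j\}$ extracts the $x_i$. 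The scaling trick neatly sidesteps the monomial bookkeeping and lets you take $R_0 = 1$, whereas the paper uses $R_0 \ge C^2$ to absorb constants; both choices are harmless since the statement asks only that some $R_0 > 0$ exist. In the end both routes rest on the same linear-independence hypothesis and the same two-norm comparison that powers Lemma \ref{lem:polynomial-decay}.
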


\begin{proof}
Notice that the corollary is identical to Lemma \ref{lem:polynomial-decay}, having dropped the assumption that $g_i$ is homogeneous, and choosing minimal $R_0$ for which the estimate holds. Instead of the vector space $V$ previously described, consider $\mc P(d)$, the space of all polynomials of degree at most $d = \max\set{d_{i}}$. As before, we put two norms on $\mc P(d)$. In fact, the family of norms is identical to those in the proof of Lemma \ref{lem:polynomial-decay}, where the family of homogeneous polynomials is just taken to be the monomials $s_1^{m_1}\dots s_k^{m_k}$, $\sum m_i \le d$. So we have the norms $\norm{\cdot}_1$ which picks out the largest monomial coefficient, and $\norm{\cdot}_2$, which takes the supremum over $[-1,1]^k$. Define a map from $V$ to $\mc P(d)$ just by inclusion. 

Notice that $\mc P(d) = \bigoplus_{i=0}^d E_i$, where $E_i$ is the span of all degree $i$ monomials. That is, $E_i$ is the set of all $i$-homogeneous polynomials. We may similarly decompose $V = \bigoplus_{i=0}^d V_i$, where $V_i$ is the span of each of the polynomials of degree $i$ (this space may be empty). We denote $m_i = \dim(V_i)$. There is an inclusion $\psi : V \to \mc P(d)$. Notice that by our assumptions, $\psi(V_i) \subset \bigoplus_{j=0}^i E_j$. Furthermore, our assumption is exactly that $\pi_i \of \psi|_{V_i}$ is injective, where $\pi_i : \mc P(d) \to E_i$ is the canonical projection.

We reorganize the polynomials $g_i$ as follows: let $\set{h_{i,j} : 1 \le j \le m_i}$ be the collection of such polynomials of degree $i$. Let $p = \sum_{i=0}^d \sum_{j=1}^{m_i} x_{i,j}h_{i,j}$. We claim by induction on $i$ that if $p(s) \le \ve$ for all $s \in [-R,R]^k$, then $\abs{x_{i,j}} \le C R^{-i}$ (where the constant $C$ may be allowed to increase at most finitely many times throughout the induction). Our base case is $i = d$. Notice that since $\pi_i \of \psi|_{V_i}$ is injective, the coefficients of the monomial terms in $E_i$ are independent linear combinations of $\set{x_{i,j} : 1 \le j \le m_i}$. Furthermore, since $d$ is the maximal degree, there are no other $x_{i,j}$ terms appearing as coefficients. So by Lemma \ref{lem:polynomial-decay}, we get exactly that $\abs{x_{i,j}} \le CR^{-i}$.

We we proceed by induction. Assume we have shown the decay rate for every degree in $[i+1, d]$. Then notice that the coefficient of each degree $i$ monomial is some linear combination of the $x_{a,j}$ for $i+1 \le a \le d$ and the $x_{i,j}$, where the $x_{i,j}$ combinations are linearly independent. Notice also by assumption that there are exactly $m_i$ linearly independent terms. Now, we know that $x_{a,j} \le CR^{-a}$, so if $R \ge C^2$, we get that $x_{a,j} \le R^{-a+1/2} \le R^{-i-1/2}$. Therefore, since some independent linear combination plus the faster terms must decay at rate at least $R^{-i}$, the coefficients $x_{i,j}$ themselves must decay at rate at least $R^{-i}$.

The second part of the corollary is trivial as $\deg(g_i)=d_i$ and thus we finish the proof of the corollary.
\end{proof}








\begin{lemma}\label{lem:BowenLieAlgebra}
Let $Y = \sum x_{j,i}Y_{j,i}$ with $x_{j,i} \in \R$. There exists $C_0,R_0 >0$ such that if $R \ge R_0$ and $\norm{\Ad(\exp(U))Y)} \le \ve$ for all $U = \sum s_jU_j$ with $s \in [-R,R]^k$, then $\abs{x_{j,i}} \le C_0\ve R^{-i}$ for every $i,j$. Conversely, if $\abs{x_{j,i}} \le \ve R^{-i}$ for every $i,j$, then we have $\norm{\Ad(\exp(U))Y)} \le C_0\ve$ for all $U = \sum s_jU_j$ with $s \in [-R,R]^k$.
\end{lemma}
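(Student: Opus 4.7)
The plan is to establish the two-sided equivalence
\[ \sup_{s\in[-R,R]^k}\norm{\Ad(\exp(U_s))Y} \asymp \max_{(j,i)} R^{i}\abs{x_{j,i}}, \]
valid with constants uniform in $R$ for $R\ge R_0$; both parts of the lemma follow immediately. The argument proceeds by rescaling the parameter, identifying a linearly independent limiting family of polynomials, and invoking finite-dimensional norm equivalence.

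First, I would rescale. For $s\in[-R,R]^k$ write $s=Rt$ with $t\in[-1,1]^k$, so $U_s=RU_t$ where $U_t=\sum_{a=1}^k t_aU_a$. Since $Y_{j,i}\in\mf g_i$ forces $\ad_{U_t}^{i+1}Y_{j,i}=0$, the exponential series truncates to $\Ad(\exp(U_s))Y_{j,i}=\sum_{r=0}^{i}\frac{R^{r}}{r!}\ad_{U_t}^{r}Y_{j,i}$. Introducing the rescaled coefficients $y_{j,i}=R^{i}x_{j,i}$ gives
\[ \Ad(\exp(U_s))Y=\sum_{j,i} y_{j,i}\,\mathbf{v}_{j,i}^{R}(t),\qquad \mathbf{v}_{j,i}^{R}(t):=\sum_{r=0}^{i}\frac{R^{r-i}}{r!}\ad_{U_t}^{r}Y_{j,i}. \]
As $R\to\infty$, each $\mathbf{v}_{j,i}^{R}$ converges uniformly on $[-1,1]^k$, at rate $O(R^{-1})$, to the $\mf g_0$-valued homogeneous polynomial $\mathbf{v}_{j,i}^{\infty}(t):=\frac{1}{i!}\ad_{U_t}^{i}Y_{j,i}$ of degree $i$ in $t$.

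The core step is to prove that $\set{\mathbf{v}_{j,i}^{\infty}}_{j,i}$ is a linearly independent family of polynomials. Suppose $\sum\tau_{j,i}\mathbf{v}_{j,i}^{\infty}(t)=0$. Grouping by homogeneous degree in $t$ reduces the problem to the linear independence of $\set{\mathbf{v}_{j,d}^{\infty}}_j$ for each fixed $d$. Applying an arbitrary functional $\theta_\alpha\in\mf g_0^{*}$ and expanding $\ad_{U_t}^{d}$ via the multinomial formula (valid because $\mf u$ is abelian), the coefficient of each monomial $t^{\vec m}$ in $\theta_\alpha\bigl(\sum_j\tau_{j,d}\mathbf{v}_{j,d}^{\infty}(t)\bigr)$ is proportional to $\Phi_d(U_{\vec m}\otimes\theta_\alpha)\bigl(\sum_j\tau_{j,d}Y_{j,d}\bigr)$. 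Since the functionals $\Phi_d(U_{\vec m}\otimes\theta_\alpha)$ span $\mf g_d^{*}$ by Lemma \ref{lem:surjective}, one obtains $\sum_j\tau_{j,d}Y_{j,d}=0$, and hence $\tau_{j,d}=0$ for all $j$ because $\set{Y_{j,d}}$ is a basis of $\mf g_d$.

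Finally, on the finite-dimensional space of $\mf g$-valued polynomials in $t$ of degree at most $m$ equipped with the sup norm over $[-1,1]^k$, linear independence of $\set{\mathbf{v}_{j,i}^{\infty}}$ yields the norm equivalence $\sup_{t\in[-1,1]^k}\norm{\sum y_{j,i}\mathbf{v}_{j,i}^{\infty}(t)} \asymp \max_{j,i}\abs{y_{j,i}}$. I would then transfer this to the $R$-dependent family via a standard perturbation argument: the smallest singular value of the linear map $y\mapsto\sum y_{j,i}\mathbf{v}_{j,i}^{R}$ depends continuously on $R$ (including at $R=\infty$, by the uniform convergence above), so it stays bounded below by, say, half its limiting value whenever $R\ge R_0$. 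Substituting $y_{j,i}=R^{i}x_{j,i}$ then yields both directions of the lemma with a common constant $C_0$. The principal obstacle is precisely this uniformity of the equivalence constants as $R\to\infty$, which the perturbation step just described is designed to handle.
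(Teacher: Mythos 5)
Your proof is correct, but it takes a genuinely different route from the paper's. The paper pushes everything through the scalar polynomials $p_{j,i} = \theta_{\alpha_j}(\pi_0(\Ad(\exp(U))Y_{j,i}))$ supplied by Lemma~\ref{lem:good-basis}, whose top-degree parts are linearly independent by the dual-basis construction, and then invokes the coefficient-decay estimate Corollary~\ref{cor:polynomial-decay} --- whose proof is a somewhat delicate descending induction on degree designed precisely to deal with the non-homogeneity of the $p_{j,i}$. Your rescaling $s = Rt$, $y_{j,i} = R^i x_{j,i}$ replaces that induction by a compactness argument: the non-homogeneous lower-order terms become $O(R^{-1})$ perturbations of a limiting homogeneous vector-valued family $\mathbf{v}_{j,i}^{\infty}(t) = \tfrac{1}{i!}\ad_{U_t}^i Y_{j,i}$, and the whole lemma collapses to (i) linear independence of that family, which you deduce from surjectivity of $\Phi_d$ (Lemma~\ref{lem:surjective}) together with the direct-sum decomposition, and (ii) a lower-semicontinuity-of-smallest-singular-value perturbation step. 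This is a cleaner and slightly more robust argument: it never needs the specific $(\alpha_j)$-tuple structure or the linear independence of the $p_{j,i}^{(i)}$ from Lemma~\ref{lem:good-basis}, only that $\{Y_{j,i}\}_j$ is any basis of $\mf g_i$ --- so it would in fact prove the lemma for an arbitrary basis adapted to the splitting $\mf g = \bigoplus \mf g_i$, whereas the paper's statement and proof are tied to the generalized chain basis. What you lose is the explicit, reusable polynomial machinery (Lemma~\ref{lem:good-basis}, Corollary~\ref{cor:polynomial-decay}) that the paper sets up along the way; what you gain is a shorter, more conceptual argument.
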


\begin{proof}
Notice that if $\norm{\Ad(\exp(U))Y} \le \ve$, then $\norm{\pi_0\Ad(\exp(U))Y} < \ve$. Since all norms are equivalent, there exists $C' > 0$ such that if this occurs, then $\abs{\theta_\alpha(\pi_0(\Ad(\exp(U))Y))} < C'\ve$ for all $\alpha = 1,\dots,n_0$. But due to Lemma \ref{lem:BasicLieTheory}, $\theta_\alpha(\pi_0(\Ad(\exp(U))Y))$ is a sum of polynomials $x_{j,i}p_{j,i}$, as described in Lemma \ref{lem:good-basis}, and $p_{j,i}^{(i)}$ are linearly independent.
Now, for fixed $\alpha$, 
we apply Corollary \ref{cor:polynomial-decay} and finish the proof of the first part of the Lemma. Recall that Lemma \ref{lem:projectionDegree} and Definition \ref{def:generalizedChainBasis} imply $\deg(\pi_{\ell}(\Ad(\exp(U))Y_{j,i}))=\max\{i-\ell,0\}$. Combining this with $\abs{x_{j,i}} \le \ve R^{-i}$, we finish the proof of second part of the lemma.
\end{proof}

\subsection{Proof of Theorem \ref{thm:main}: topological case}
Fix $\eta > 0$, $\bar{x}\in G$, $X\in\mathfrak{g}$ and define $V^{(\eta)}(\bar{x}) = \set{\exp(X)\bar{x} : \norm{X} < \eta}$. Let $$\Bow(\bar{x},R,\eta) = \set{ \bar{y}\in G : \Phi_s \cdot \bar{x} \in V^{(\eta)}(\Phi_s \cdot \bar{y}) \mbox{ for all } s \in [-R,R]^k},$$
where $\Phi_s$ is the lift of our abelian unipotent action. Moreover, if $x\in G/\Gamma$ and $\pi:G\to G/\Gamma$ is the canonical projection, then we can define $V^{(\eta)}$ and $\Bow$ on $G/\Gamma$ as follows: $$V^{(\eta)}(x)=\pi(\bigcup_{\bar{x}\in\pi^{-1}x}V^{(\eta)}(\bar{x})),$$
$$\Bow(x,R,\eta) = \set{ y \in G/\Gamma: s \cdot x \in V^{(\eta)}(s \cdot y) \mbox{ for all } s \in [-R,R]^k}.$$

\begin{proposition}\label{prop:BowenEstimate}
Let $K\subset G/\Gamma$ be a compact set. Then for $\eta\in(0,\frac{\inj(K)}{10})$, there exists $C >0$ such that for every $x \in K$:
\[ C^{-1}R^{-h} \le  \mu(\Bow(x,R,\eta)) \le CR^{-h}, \]
where $h=\sum_{i=0}^{m}\dim(\mathfrak{g}_i)\cdot i$.
\end{proposition}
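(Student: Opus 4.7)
The strategy is to pull the Bowen ball back to the Lie algebra via the exponential map, translate the Bowen condition into an algebraic condition on a lifted element $Y \in \mathfrak{g}$, and then apply Lemma \ref{lem:BowenLieAlgebra} to see that the resulting set is (up to bounded multiplicative constants) a box in the generalized chain basis with side length proportional to $R^{-i}$ in the $\mathfrak{g}_i$-direction. Its Lebesgue volume is $\prod_{i=0}^{m}\prod_{j=1}^{n_i}(2C\eta R^{-i}) = (2C\eta)^{\dim\mathfrak{g}} R^{-h}$, which is the claimed order.

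More concretely, pick a lift $\tilde x\in G$ of $x$. Because $\eta<\inj(K)/10$, the map $Y\mapsto\exp(Y)\tilde x$ is a diffeomorphism onto $V^{(\eta)}(\tilde x)$ and projects injectively onto $V^{(\eta)}(x)\subset G/\Gamma$; the Jacobian of $\exp$ on $\{\|Y\|<\eta\}$ is bounded above and below by a constant depending only on the ambient norm. Writing $\tilde y=\exp(Y)\tilde x$ and using Lemma~\ref{lem:BasicLieTheory} one gets the identity
\[
\Phi_s\tilde y \;=\; \exp\bigl(\Ad(\exp(-sU))Y\bigr)\,\Phi_s\tilde x,
\]
where $U=\sum s_j U_j$. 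Hence, at the level of the lift in $G$, the Bowen condition is precisely $\|\Ad(\exp(-sU))Y\|<\eta$ for every $s\in[-R,R]^k$. Lemma~\ref{lem:BowenLieAlgebra} then identifies the set of admissible $Y$, in the basis $\{Y_{j,i}\}$, with the chain box
\[
B_{R,\eta} \;=\; \bigl\{\,Y=\textstyle\sum x_{j,i}Y_{j,i} \;:\; |x_{j,i}|\le c\,\eta\, R^{-i} \text{ for all } j,i\,\bigr\},
\]
up to enlarging or shrinking $\eta$ by the fixed constant $C_0$. Computing $\Leb(B_{R,\eta})$ in the chain basis, and transporting via $\exp$ and $\pi$ (both with bounded Jacobians on the relevant region), gives the matching upper and lower bounds $C^{-1}R^{-h}\le\mu(\Bow(x,R,\eta))\le CR^{-h}$.

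The lower bound is essentially immediate: by the converse part of Lemma~\ref{lem:BowenLieAlgebra}, the box $B_{R,\eta/C_0}$ embeds via $Y\mapsto\pi(\exp(Y)\tilde x)$ into $\Bow(x,R,\eta)$, and injectivity of this map is guaranteed since $\eta<\inj(K)/10$. The main obstacle is the upper bound, where one must rule out ``wrap-around'' contributions: a priori a point $y=\pi(\exp(Y)\tilde x)$ with $\|Y\|<\eta$ might satisfy the Bowen condition on $G/\Gamma$ even though $\|\Ad(\exp(-sU))Y\|$ becomes large for some $s$, because at some later $s$ the orbit $\Phi_s\tilde y$ could be close to $\Phi_s\tilde x\cdot\gamma$ for a nontrivial $\gamma\in\Gamma$. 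To exclude this I would argue by continuity of $s\mapsto Y(s):=\Ad(\exp(-sU))Y$, starting from $\|Y(0)\|<\eta$: let $R_{\max}$ be the largest radius (in the sup norm on $\R^k$) for which $\|Y(s)\|<\inj(K)$ on $[-R_{\max},R_{\max}]^k$; on this rectangle the only possible $\gamma_s$ is $\gamma_s=e$, forcing $\|Y(s)\|<\eta$ there, and then an elementary maximality/continuity argument combined with Lemma~\ref{lem:BowenLieAlgebra} shows $R_{\max}\ge R$ (otherwise $|x_{j,i}|$ would have to exceed $c\eta R^{-i}$, contradicting the Bowen condition at the boundary of the maximal rectangle). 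This reduces the upper bound to the computation above.
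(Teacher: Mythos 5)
Your overall strategy coincides with the paper's own proof: lift the Bowen ball to $G$ and then to $\mathfrak g$ via $\exp$, use Lemma~\ref{lem:BasicLieTheory} to rewrite the Bowen condition as $\|\Ad(\exp(-U_s))Y\|<\eta$ for all $s\in[-R,R]^k$, apply Lemma~\ref{lem:BowenLieAlgebra} to identify the admissible $Y$ with a chain box with sides $\asymp\eta R^{-i}$ in the $\mf g_i$-direction, and read off the volume $\asymp R^{-h}$. The lower bound via $\pi(\Bow(\bar x,R,\eta))\subset\Bow(x,R,\eta)$ and injectivity of $\pi$ on the $\eta$-ball around $\bar x$ is indeed immediate, exactly as you say. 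You are also right to single out the wrap-around issue for the upper bound; the paper's own proof simply asserts $\pi(\Bow(\bar x,R,\eta))=\Bow(x,R,\eta)$ without comment, so your proposal is more careful about flagging what needs to be shown.

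However, your resolution of the wrap-around has a genuine gap. You define $R_{\max}$ by the condition $\|Y(s)\|<\inj(K)$ on $[-R_{\max},R_{\max}]^k$ and then claim that on this rectangle the only possible $\gamma_s$ is $e$. For that uniqueness you need the injectivity radius at $\pi(\Phi_s\bar y)$ (or $\pi(\Phi_s\bar x)$) to be bounded below by roughly $2\eta$ for the relevant $s$, since two distinct $\gamma$'s both giving an $\eta$-close lift forces $d_G(\Phi_s\bar y,\Phi_s\bar y\gamma)<2\eta$, i.e.\ a small injectivity radius at that point of the orbit. But the condition $\|Y(s)\|<\inj(K)$ says nothing about where the orbit $\pi(\Phi_s\bar x)$ is located; it may well have left $K$, and in the noncompact case the injectivity radius outside $K$ can be arbitrarily small, so the claim "the only possible $\gamma_s$ is $e$ on $[-R_{\max},R_{\max}]^k$'' is not justified. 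The argument is correct when $K=G/\Gamma$ is compact (the orbit cannot leave $K$, so $\inj(\pi\Phi_s\bar y)\geq\inj(K)>10\eta$ throughout), and this is in fact the only situation in which the paper invokes the upper bound of this proposition for a Bowen ball in $G/\Gamma$: in Section~\ref{sec:mainMetricSlow} the space is compact, and in Section~\ref{sec:mainMetricSlowNoncompact} the upper bound is applied to $\Bow(\bar x,R,2\eta)$ on the universal cover $G$ itself, where $\Gamma$ plays no role and wrap-around cannot occur. So your proof would be complete if you restrict the wrap-around argument to the compact case, or reformulate the upper bound on $G$ rather than on $G/\Gamma$; as written, the maximality step does not close.
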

\begin{proof}
Suppose that $x\in K$ and $\bar{x}\in\pi^{-1}x$. Notice that $\eta\in(0,\frac{\inj(K)}{10})$ implies $$\pi(\Bow(\bar{x},R,\eta))=\Bow(x,R,\eta),$$ and thus we have
\begin{equation}\label{eq:BowenLower1}
\bar{\mu}(\Bow(\bar{x},R,\eta))=\mu(\pi(\Bow(\bar{x},R,\eta)))=\mu(\Bow(x,R,\eta)).
\end{equation}

By combining Definition \ref{def:generalizedChainBasis} and Lemma \ref{lem:BowenLieAlgebra}, we obtain that
\begin{equation}\label{eq:BowenLower2}
C^{-1}R^{-h}\leq\bar{\mu}(\Bow(\bar{x},R,\eta))\leq C R^h,
\end{equation}
where $C$ is a constant that only depends on $k$, $\eta$ and $C_0$ in Lemma \ref{lem:BowenLieAlgebra} and $h$ satisfies:
$$h=\sum_{i=0}^{m}\sum_{j=1}^{n_i}i=\sum_{i=0}^m\dim(\mathfrak{g}_i)\cdot i.$$

Combining \eqref{eq:BowenLower1} and \eqref{eq:BowenLower2}, we finish the proof of the Proposition.
\end{proof}

Proposition \ref{prop:BowenEstimate} and Lemma \ref{lem:uniform-entropy} together prove the topological slow entropy case of Theorem \ref{thm:main} though standard arguments (see, e.g., \cite{KanigowskiVinhageWei}). We provide a proof here for completeness.

\begin{corollary}\label{cor:topologicalSlowEntropyCompact}
Let  $\alpha:\mathbb{R}^k\curvearrowright G/\Gamma$ be an abelian unipotent action generated by $\mathfrak{u}$. Then the topological polynomial slow entropy of $\alpha$ is $h_{\mathfrak{u}}$, where $h_{\mathfrak{u}}$ is defined in Theorem \ref{thm:main}.
\end{corollary}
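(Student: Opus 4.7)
The plan is to derive Corollary \ref{cor:topologicalSlowEntropyCompact} from Proposition \ref{prop:BowenEstimate} by a standard volume-counting argument, using Lemma \ref{lem:uniform-entropy} to replace the metric $\varepsilon$-balls appearing in Definition \ref{def:topologicalSlow} by the uniformity sets $V^{(\eta)}$. Write $h := \sum_{i=0}^m \dim(\mathfrak{g}_i)\cdot i$; since the max-norm convention gives $\Leb(F_n) = (2n)^k$ with $k = \dim \mathfrak{u}$, the target value is $h_{\mathfrak{u}} = h/k$.

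First I would fix a compact $K \subset G/\Gamma$. By Lemma \ref{lem:injectiveRadiusComapctSet} we may shrink $K$ (at a negligible cost of measure) so that $\inj(K) > 0$, and then fix $\eta \in (0, \inj(K)/10)$ so that Proposition \ref{prop:BowenEstimate} applies uniformly on $K$, giving
$$C^{-1} n^{-h} \;\leq\; \mu(\Bow(x, n, \eta)) \;\leq\; C n^{-h}, \qquad x \in K.$$

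Next I would convert these measure bounds into two-sided estimates on the covering and separation numbers. On the one hand, any $V^{(\eta)}_{F_n}$-separated family $\{x_1,\dots,x_N\} \subset K$ yields pairwise disjoint Bowen balls $\Bow(x_i, n, \eta/2)$ (by the standard $V * V \subset V'$ refinement in the uniformity base); since their total measure is bounded by $\mu(G/\Gamma) \leq 1$ and each ball has measure at least $C'^{-1} n^{-h}$, we get $S_B(F_n, V^{(\eta)}, K) \leq C' n^h$. On the other hand, any $V^{(\eta)}_{F_n}$-cover of $K$ must account for mass $\geq \mu(K)$, and each covering set has measure at most $C n^{-h}$, so $N_B(F_n, V^{(\eta)}, K) \geq C^{-1}\mu(K)\, n^h$. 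Combined with the trivial inequality $N_B \leq S_B$, both numbers are of order $n^h$ up to multiplicative constants that depend on $\eta$ and $K$ but not on $n$.

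Taking logarithms and dividing by $\log \Leb(F_n) = k \log(2n)$, both $\limsup$ and $\liminf$ as $n \to \infty$ equal $h/k$, independently of the choice of small $\eta$. Taking the supremum over compact sets $K$ (using Lemma \ref{lem:injectiveRadiusComapctSet} to exhaust $G/\Gamma$ by compacts of measure arbitrarily close to $1$ in the non-compact case, which preserves the lower bound since only $\mu(K) > 0$ is needed) yields $\overline{h}_{\mathrm{top}}(\alpha) = \underline{h}_{\mathrm{top}}(\alpha) = h/k = h_{\mathfrak{u}}$. The only substantive input is Proposition \ref{prop:BowenEstimate}; once the Lie-algebra divergence estimates of Section \ref{sec:polynomialControl} have been packaged into that measure bound, the present corollary is essentially bookkeeping, and I do not anticipate any genuine obstacle.
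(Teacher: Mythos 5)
Your argument is correct and takes essentially the same route as the paper: fix $\eta$ below the injectivity-radius threshold, use Lemma \ref{lem:uniform-entropy} to pass to the uniformity base, and translate the two-sided measure bound of Proposition \ref{prop:BowenEstimate} into an upper bound on separated sets (via disjointness and finite total mass) and a lower bound on covering numbers, then take logs. The paper's proof is this same volume-counting argument; the minor extra bookkeeping you add (the $V*V$ refinement from separated points to disjoint balls, and the explicit exhaustion by compacts) is harmless and just makes explicit what the paper leaves implicit.
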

\begin{proof}
By the definition of topological polynomial slow entropy (Definition \ref{def:topologicalSlow}), we can estimate the topological slow entropy either through the maximal number of disjoint balls in $G/\Gamma$ with centers in an arbitrary compact set or the minimal number of balls to cover an arbitrary compact set.

To get an upper bound of the topological polynomial slow entropy of $\alpha$, we fix $K\subset G/\Gamma$ as a compact set and recall that $S_B(F_n,\epsilon,K)$ is the maximal number of $A$-Bowen balls of radius $\epsilon$ which can be placed disjointly in $G/\Gamma$ with centers in $K$. Then Lemma \ref{lem:uniform-entropy} and Proposition \ref{prop:BowenEstimate} imply that
\begin{equation}\label{eq:MaximalSeparatedSets}
S_B(F_n,\epsilon,K)\leq Cn^h,
\end{equation}
where $F_n=[-n,n]^k$ and $h=\sum_{i=0}^{m}\dim(\mathfrak{g}_i)\cdot i$.
Since \eqref{eq:MaximalSeparatedSets} is true for any compact subset of $G/\Gamma$,  $\Leb(F_n)=(2n)^k$ and $k=\dim(\mathfrak{u})$, we obtain that
\begin{equation}\label{eq:topologicalUpperBound}
h_{\operatorname{top}}(\alpha)\leq h_{\mathfrak{u}},
\end{equation}
where $h_{\mathfrak{u}}=\frac{1}{\dim(\mathfrak{u})}\sum_{i=0}^m\dim(\mathfrak{g}_i)\cdot i$.

For the lower bound of topological polynomial slow entropy we fix $K\subset G/\Gamma$ as a compact set again and recall that $N_B(F_n,\epsilon,K)$ is the minimal number of $A-$Bowen balls of radius $\epsilon$ to cover $K$. Then Lemma \ref{lem:uniform-entropy} and Proposition \ref{prop:BowenEstimate} imply that
\begin{equation}\label{eq:minimalCovering}
N_B(F_n,\epsilon,K)\geq C^{-1}n^h.
\end{equation}
Because \eqref{eq:minimalCovering} is true for any compact subset of $G/\Gamma$, we obtain that
\begin{equation}\label{eq:topologicalLowerBound}
h_{\operatorname{top}}(\alpha)\geq h_{\mathfrak{u}}.
\end{equation}

Combining \eqref{eq:topologicalUpperBound} and \eqref{eq:topologicalLowerBound}, we finish the proof of the corollary.
\end{proof}

\section{Metric slow entropy of actions on compact homogeneous spaces}\label{sec:mainMetricSlow}
\begin{definition}[Well-partitionable]\label{def:wellPartitionable}
A metric space $X$ is well-partitionable if it is $\sigma$-compact and for any Borel probability measure $\mu$, compact set $K\subset X$, $\epsilon>0$ and $\delta>0$, there exist $\kappa>0$ and a finite partition $\mathcal{P}$ of $K$ whose atoms have diameter between $\frac{\epsilon}{2}$ and $\epsilon$ and such that $\mu\left(\bigcup_{\xi\in\mathcal{P}}\partial_{\kappa}\xi\right)<\delta$, where
$$\partial_{\kappa}\xi=\{y\in X:B(y,\kappa)\cap\xi\neq\emptyset\text{ but }B(y,\kappa)\nsubseteq\xi\}.$$
\end{definition}
\begin{remark}\label{rem:HomoWellPartition}
Note that any smooth manifold is well-partitionable. Moreover, by Theorem 21.13 in \cite{lee}, the homogeneous space $G/\Gamma$ is a smooth manifold and thus well-partitionable.
\end{remark}

The following is a corollary of Proposition $2$ from \cite{Kat-Tho}. In \cite{Kat-Tho} the authors consider a compact space $X$ but their proof can be easily generalized to the situation that $X$ is well-partitionable.
\begin{theorem}[Slow Goodwyn's Theorem]\label{thm:slowGoodwyn}
Suppose that $\alpha:\mathbb{R}^k\curvearrowright(X,d)$ is an action by uniformly continuous homeomorphisms of a metric space. Then for any invariant measure $\mu$:
$$h_{\mu}(\alpha)\leq h_{\operatorname{top}}(\alpha).$$
\end{theorem}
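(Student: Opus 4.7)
The plan is to adapt the classical Katok--Thouvenot argument behind Proposition 2 of \cite{Kat-Tho} so that compactness of the ambient space is replaced by the well-partitionable hypothesis, which is used only to localize the approximation of an arbitrary partition by one with thin boundary to a large-measure compact subset.

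First I would fix a finite measurable partition $\mathcal P=\{P_1,\dots,P_r\}$ of $X$ and $\epsilon>0$, and aim to bound $N_H(F_n,C\epsilon,\mathcal P)$ by a Bowen covering number. Choose a compact set $K\subset X$ with $\mu(K)>1-\epsilon$ and apply Definition \ref{def:wellPartitionable} to produce a finite partition $\mathcal Q$ of $K$ whose atoms have diameter at most $\epsilon/2$ and for which $\mu(\partial_\kappa\mathcal Q)<\epsilon$ for some $\kappa>0$. Extend $\mathcal Q$ to a finite partition $\mathcal P'$ of $X$ by adjoining $X\setminus K$ as an extra atom and refining with $\mathcal P$, so that each atom of $\mathcal P'$ is contained in a single atom of $\mathcal P$ and $\mu(\partial_\kappa\mathcal P')\le 2\epsilon$ up to the (small) contribution from the boundary of $X\setminus K$.

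Second, I would apply the pointwise ergodic theorem for $\mathbb R^k$-actions along the F{\o}lner sequence $F_n$ (Wiener's theorem) to the indicator of $\partial_\kappa\mathcal P'$. Since $\int\mathbb E[\chi_{\partial_\kappa\mathcal P'}\mid\mathcal I]\,d\mu=\mu(\partial_\kappa\mathcal P')<2\epsilon$, Markov's inequality shows that the conditional expectation exceeds $\sqrt{\epsilon}$ only on a set of $\mu$-measure $O(\sqrt{\epsilon})$. Combined with Egorov's theorem, this furnishes a measurable set $E\subset X$ with $\mu(E)>1-O(\sqrt\epsilon)$ and an integer $N_0$ such that for every $x\in E$ and every $n\ge N_0$,
\[\frac{1}{\Leb(F_n)}\,\Leb\bigl\{a\in F_n:\alpha_a(x)\in\partial_\kappa\mathcal P'\bigr\}<\sqrt\epsilon.\]
Third, take a minimal $(F_n,V^{(\kappa)})$-Bowen cover of $K$ of cardinality $N_B(F_n,V^{(\kappa)},K)$. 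For two points $x,y\in E$ lying in a common Bowen ball, their orbits $\alpha_a(x),\alpha_a(y)$ are within $\kappa$ in $d$ for every $a\in F_n$, so they share an atom of $\mathcal P'$, and hence of $\mathcal P$, whenever $\alpha_a(x)\notin\partial_\kappa\mathcal P'$; the previous step then yields $d_H^{F_n,\mathcal P}(x,y)<\sqrt\epsilon$ for all $n\ge N_0$. Adding a single Hamming ball to absorb the exceptional piece $X\setminus(E\cap K)$ (of measure $O(\sqrt\epsilon)$) gives $N_H(F_n,\sqrt\epsilon,\mathcal P)\le N_B(F_n,V^{(\kappa)},K)+1$. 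Taking logarithms, dividing by $\log\Leb(F_n)$, passing $n\to\infty$, then $\epsilon\to 0$, and finally taking the supremum over $\mathcal P$ produces $\overline h_\mu(\alpha)\le\overline h_{\operatorname{top}}(\alpha)$.

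The principal obstacle is organizing the uniform bound in the second step: Wiener's ergodic theorem yields only almost-everywhere convergence, yet we need a single threshold $N_0$ that is valid for all points in a large-measure set, and the pointwise limit is merely the conditional expectation $\mathbb E[\chi_{\partial_\kappa\mathcal P'}\mid\mathcal I]$ rather than $\mu(\partial_\kappa\mathcal P')$ itself when the action is not ergodic. The Markov-then-Egorov combination sketched above resolves this, but the scales $\epsilon,\kappa,\sqrt\epsilon,N_0$ must be chosen in the correct order so that every approximation error (measure of $X\setminus K$, measure of $\partial_\kappa\mathcal P'$, ergodic error, and extension of $\mathcal Q$ to $\mathcal P'$) remains controlled simultaneously.
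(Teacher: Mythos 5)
The paper itself offers no detailed proof here; it simply cites Proposition~2 of Katok--Thouvenot and remarks that compactness may be replaced by well-partitionability, so your job was to reconstruct that argument, and the overall architecture you chose (partition with thin $\kappa$-boundary, pointwise ergodic theorem plus Egorov, comparison of Bowen and Hamming balls) is exactly the right one.

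There is, however, a genuine gap in the second step of your construction of $\mathcal P'$. You build $\mathcal Q$ on $K$ with $\mu\bigl(\bigcup_{\xi\in\mathcal Q}\partial_\kappa\xi\bigr)$ small, then pass to $\mathcal P' = (\mathcal Q\cup\{X\setminus K\})\vee\mathcal P$ so that $\mathcal P'$ refines $\mathcal P$. But refining with an arbitrary $\mathcal P$ destroys the thin-boundary property: the new atoms $Q_i\cap P_j$ acquire the $\kappa$-boundary of $P_j$ inside $Q_i$, and since $\mathcal P$ is arbitrary there is no reason for $\mu\bigl(\bigcup_j\partial_\kappa P_j\bigr)$ to be small (it need not even tend to $0$ as $\kappa\to 0$, e.g.\ if some $\partial P_j$ has positive measure). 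So the claim $\mu(\partial_\kappa\mathcal P')\le 2\epsilon+\text{(small)}$ does not follow, and the later counting of good times $a\in F_n$ collapses. Two standard repairs: (i)~do not refine with $\mathcal P$ at all --- instead first pick disjoint compacts $K_j\subset P_j$ with $\mu(P_j\setminus K_j)$ small, set $K=\bigcup_j K_j$, and then apply well-partitionability to this $K$ with $\epsilon$ smaller than half the minimal distance between the $K_j$'s; since each atom of $\mathcal Q$ has diameter below that gap, it automatically lies inside a single $K_j\subset P_j$, so $\mathcal P'=\mathcal Q\cup\{X\setminus K\}$ already refines $\mathcal P$ on $K$ with no extra refinement and retains the thin-boundary estimate (the excursions into $X\setminus K$ are controlled by a second ergodic-average count, analogous to your count of visits to $\partial_\kappa\mathcal P'$); or (ii)~sidestep arbitrary $\mathcal P$ entirely by invoking Proposition~\ref{prop:generatingSequence}, which lets you compute $h_\mu(\alpha)$ along a generating sequence of well-partitioned partitions $\mathcal P_{\eta_m}$, each of which has small $\kappa$-boundary by construction. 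With either patch, the rest of your argument (Markov--Egorov uniformization, Bowen-to-Hamming comparison, order of limits $n\to\infty$, then $\epsilon\to 0$, then $\sup_{\mathcal P}$) is sound.
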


Recall that a sequence of partitions $\xi_m$ of a standard probability space $(X,\mu)$ with a measure-preserving action $\alpha:\mathbb{R}^k\curvearrowright(X,\mu)$ is \emph{generating} if $\xi_m^{\alpha,\mathbb{R}^k}$ converges to the point partition, where $\xi_m^{\alpha,\mathbb{R}^k}=\bigvee_{0}^n\bigvee_{r\in C_n}r\cdot\xi$ and $C_n=[-n,n]^k$. In fact, a generating sequence of partitions simplifies the calculation of slow entropy:
\begin{proposition}[Proposition $1$ in \cite{Kat-Tho}]\label{prop:generatingSequence}
Let $\xi_m$ be a generating sequence of partitions for the action $\alpha:\mathbb{R}^k\curvearrowright(X,d)$, then
$$h_{\mu}(\alpha)=\lim_{m\to\infty}\lim_{\ve \to 0} \lim_{n \to \infty} \dfrac{\log N_H(F_n,\ve,\xi_m)}{\log(\Leb(F_n))}.$$
\end{proposition}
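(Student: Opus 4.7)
The plan is to establish the equality by the standard sandwich method: trivially the right-hand side is dominated by $h_\mu(\alpha)$ since each $\xi_m$ is just one admissible partition, so the main content lies in the reverse inequality, which asserts that the supremum over all finite partitions in the definition of $h_\mu(\alpha)$ is actually attained in the limit along any generating sequence. A preliminary reduction is to replace $\xi_m$ by the nested sequence $\tilde\xi_m = \bigvee_{j\le m}\xi_j$; this is still generating, and because refining a partition can only increase the Hamming distance (if $\xi$ refines $\mc P$ then $d_H^{A,\mc P}\le d_H^{A,\xi}$ pointwise, whence $N_H(F_n,\ve,\mc P)\le N_H(F_n,\ve,\xi)$), the quantity $N_H(F_n,\ve,\tilde\xi_m)$ is monotone non-decreasing in $m$. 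This monotonicity ensures the outer $\lim_{m\to\infty}$ exists as a supremum, and similar monotonicity in $\ve$ gives existence of $\lim_{\ve\to 0}$.

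For the nontrivial direction, fix an arbitrary finite partition $\mc P = \{P_1,\dots,P_q\}$ and a tolerance $\eta>0$. By the generating assumption, the $\sigma$-algebra $\bigvee_{m,n}\bigvee_{r\in C_n}r\cdot \xi_m$ is the full Borel $\sigma$-algebra mod $\mu$, so by density there exist $m_0,N\in\mathbb{N}$, a finite set $\Lambda\subset C_N$, and a partition $\mc P'=\{P_1',\dots,P_q'\}$ measurable with respect to $\zeta := \bigvee_{r\in\Lambda}r\cdot\xi_{m_0}$ with $\sum_i \mu(P_i\triangle P_i')<\eta$. Set $B=\bigcup_i(P_i\triangle P_i')$, so $\mu(B)<\eta$.

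The key estimate compares the three relevant Hamming distances. First, for any $x,y$,
\[
d_H^{F_n,\mc P}(x,y)\le d_H^{F_n,\mc P'}(x,y) + \frac{1}{\Leb(F_n)}\Leb\bigl\{a\in F_n: a\cdot x\in B \text{ or } a\cdot y\in B\bigr\};
\]
by the Lindenstrauss pointwise ergodic theorem applied to the tempered Følner sequence $F_n=[-n,n]^k$, the second term converges to at most $2\mu(B)<2\eta$ for $\mu$-a.e.\ $x$ and $y$. Second, since $\mc P'$ is refined by $\zeta$, we have $d_H^{F_n,\mc P'}\le d_H^{F_n,\zeta}$, and unpacking the definition of $\zeta$ yields
\[
d_H^{F_n,\zeta}(x,y) \le \sum_{r\in\Lambda}\frac{\Leb(F_n+r)}{\Leb(F_n)}\, d_H^{F_n+r,\xi_{m_0}}(x,y);
\]
the Følner property gives $\Leb((F_n+r)\triangle F_n)/\Leb(F_n)\to 0$ uniformly for $r\in\Lambda$, so each term is asymptotic to $d_H^{F_n,\xi_{m_0}}(x,y)$. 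Combining these inequalities, after discarding a set of $\mu$-measure less than $\eta$, one obtains that a $(\ve/|\Lambda|)$-Hamming cover for $\xi_{m_0}$ with respect to a slightly enlarged Følner window yields a $(\ve+2\eta+o(1))$-Hamming cover for $\mc P$ with respect to $F_n$ (plus the exceptional $\eta$ mass, which merely replaces $\ve$ by $\ve+2\eta$ in the definition of $N_H$). This gives the bound
\[
N_H(F_n,\ve+3\eta,\mc P)\le C(\Lambda)\cdot N_H(F_{n+N},\ve/|\Lambda|,\xi_{m_0}).
\]

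Taking logarithms, dividing by $\log\Leb(F_n)$, and using polynomial volume growth $\log\Leb(F_{n+N})/\log\Leb(F_n)\to 1$, one passes to $\limsup_n$, then $\ve\to 0$, then finally lets $\eta\to 0$ (forcing $m_0\to\infty$). This establishes
\[
\lim_{\ve\to 0}\limsup_n\frac{\log N_H(F_n,\ve,\mc P)}{\log\Leb(F_n)}\le \lim_{m\to\infty}\lim_{\ve\to 0}\limsup_n\frac{\log N_H(F_n,\ve,\xi_m)}{\log\Leb(F_n)},
\]
which upon taking $\sup_{\mc P}$ yields the proposition. The main obstacle in the argument is the control of the approximation error in the passage from $\mc P$ to $\mc P'$: one must ensure the Lebesgue measure of times $a\in F_n$ at which $a\cdot x$ lands in the bad set $B$ is approximately $\mu(B)$ \emph{uniformly} for most $x$ along the sequence $F_n$, which is precisely where the pointwise ergodic theorem for amenable actions along tempered Følner sequences enters. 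Everything else is bookkeeping with Følner ratios and polynomial volume growth.
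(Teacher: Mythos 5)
The paper does not prove this statement: it simply cites Proposition 1 of Katok--Thouvenot, so there is no in-paper argument to compare against. Your sketch reconstructs the standard Katok--Thouvenot argument and the overall strategy is correct: the easy inequality follows because each $\xi_m$ is one admissible partition; for the reverse inequality, approximate an arbitrary finite partition $\mc P$ in $L^1(\mu)$ by a partition measurable with respect to a finite join of translates of some $\xi_{m_0}$, control the resulting Hamming error via the pointwise ergodic theorem, and absorb the shift of the F{\o}lner window using the F{\o}lner property and polynomial volume growth.

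Two points deserve a cleaner treatment in a final write-up. First, the pointwise ergodic average $\frac{1}{\Leb(F_n)}\Leb\{a\in F_n : a\cdot x\in B\}$ converges to the conditional expectation $\E[\mathbf 1_B\mid\mathcal{I}](x)$, not to $\mu(B)$, unless the action is assumed ergodic; since $\int\E[\mathbf 1_B\mid\mathcal I]\,d\mu=\mu(B)$, Markov's inequality lets you restrict to a set of measure $\ge 1-\sqrt\eta$ on which the conditional expectation is $<\sqrt\eta$, which is what you actually need. Second, the passage from a Hamming cover for $\xi_{m_0}$ to one for $\mc P$ requires that the centers $x_i$ lie outside the exceptional set where the ergodic approximation fails; the standard fix is to relocate each center to a good point inside its Hamming ball (at the cost of doubling the radius via the triangle inequality) and to discard the balls containing no good point, which cost at most the measure of the exceptional set and are absorbed into the $\eta$ slack. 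With these adjustments the chain of inequalities, the division by $\log\Leb(F_n)$, and the order of limits ($n\to\infty$, then $\ve\to 0$, then $\eta\to 0$ forcing $m_0\to\infty$) go through as you describe, and constants like $C(\Lambda)$, $|\Lambda|$, and the enlargement $F_{n+N}$ all disappear in the $\log/\log$ normalization.
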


Before we formulate the crucial lemma that connects the Hamming distance and Bowen distance for abelian unipotent actions, we need to state the following important result about multi-variable polynomials, which will help us to estimate the divergence of the orbits.
\begin{theorem}[Brudnyi-Ganzburg inequality \cite{Brudnyi}, \cite{BrudnyiGanzburg}]\label{thm:BrudnyiGanzburg}
Let $\mathcal{P}_{d,k}(\mathbb{R})\subset\mathbb{R}[x_1,\ldots,x_k]$ denote the space of real polynomials of degree at most $d$ and let $|U|$ denote the Lebesgue measure of $U\subset\mathbb{R}^k$. Assume $V\subset\mathbb{R}^k$ is a bounded convex body and $w\subset V$ is a measurable subset. For every $p\in\mathcal{P}_{d,k}(\mathbb{R})$, we have
\begin{equation}
\sup_V|p|\leq\left(\frac{4k|V|}{|w|}\right)^d\sup_w|p|.
\end{equation}
\end{theorem}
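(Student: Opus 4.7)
The Brudnyi--Ganzburg inequality is a multidimensional generalization of the classical Remez inequality, and the natural plan is to bootstrap from the one-variable case by slicing $V$ along a carefully chosen line. I would proceed in four steps.

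First I would establish the one-dimensional version (Remez's inequality): for $q\in\mathcal{P}_{d,1}(\mathbb{R})$ on an interval $I$ and measurable $E\subset I$,
$$\sup_I |q| \leq T_d\!\left(\frac{|I|+|E|}{|I|-|E|}\right)\sup_E |q| \leq \left(\frac{4|I|}{|E|}\right)^d \sup_E |q|,$$
where $T_d$ is the Chebyshev polynomial. This follows from the classical extremal property of Chebyshev polynomials on $[-1,1]$ together with an affine change of variables; the crude bound on $T_d$ via $T_d(x)\leq(2x)^d$ for $x\geq 1$ suffices.

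Next I would select a convenient point and reduce the problem to a slice. Let $x^\ast\in V$ satisfy $|p(x^\ast)|=\sup_V |p|$ (by compactness of $\overline V$; if $V$ is not closed, a standard limiting argument applies). For each unit direction $\theta\in S^{k-1}$ set $\ell_\theta=\{x^\ast+t\theta:t\in\mathbb{R}\}$, $I_\theta=\ell_\theta\cap V$, $E_\theta=\ell_\theta\cap w$. Since $V$ is convex and contains $x^\ast$, each $I_\theta$ is an interval through $0$. If we can produce a direction $\theta_0$ with $|I_{\theta_0}|/|E_{\theta_0}|\leq k|V|/|w|$, then applying the univariate Remez inequality to $t\mapsto p(x^\ast+t\theta_0)$ on $I_{\theta_0}$ with subset $E_{\theta_0}$ yields directly
$$\sup_V |p| = |p(x^\ast)| \leq \left(\frac{4|I_{\theta_0}|}{|E_{\theta_0}|}\right)^d \sup_{E_{\theta_0}} |p| \leq \left(\frac{4k|V|}{|w|}\right)^d \sup_w |p|,$$
which is the desired bound.

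The main obstacle, and the heart of the proof, is thus producing such a direction $\theta_0$. I would use polar coordinates centered at $x^\ast$, writing
$$|V| = \int_{S^{k-1}}\!\int_{I_\theta\cap[0,\infty)} t^{k-1}\,dt\,d\sigma(\theta), \qquad |w| = \int_{S^{k-1}}\!\int_{E_\theta\cap[0,\infty)} t^{k-1}\,dt\,d\sigma(\theta),$$
and seek to exhibit $\theta_0$ where the inner ratio is favorable. Here the weight $t^{k-1}$ forces mass toward the far end of $I_\theta$, while the 1D Remez only cares about the ordinary lengths $|E_\theta|$ and $|I_\theta|$, so one needs a geometric comparison (Brunn's principle, or the concavity of slice-length raised to a suitable power for convex bodies) to translate a measure-theoretic averaging bound into a pointwise bound with the right constant. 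Getting the factor exactly $4k$ (rather than, say, $C^k$) is the delicate part and is where the convexity of $V$ is genuinely exploited; everything else is affinely invariant and survives a reduction to the case where $V$ is the unit ball, which can streamline the comparison between the two polar integrals.
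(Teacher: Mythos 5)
The paper does not prove this theorem: it is quoted as a result from the literature (\cite{Brudnyi}, \cite{BrudnyiGanzburg}) and used as a black box, so there is no internal proof to compare your sketch against. Nevertheless your sketch follows the classical reduction-to-one-dimension route and is worth evaluating on its own terms. A cosmetic point first: your intermediate Chebyshev expression $T_d\bigl(\frac{|I|+|E|}{|I|-|E|}\bigr)$ is mis-stated (it diverges as $|E|\to|I|$, where the Remez bound should approach $1$); the correct form is $T_d\bigl(\frac{2|I|-|E|}{|E|}\bigr)$, from which the crude $(4|I|/|E|)^d$ bound you then use does indeed follow via $T_d(x)\le(2x)^d$ for $x\ge 1$.

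The genuine gap is exactly where you flagged it: you do not prove the existence of a direction $\theta_0$ with a favourable length ratio, and the tools you gesture at (Brunn-type concavity of slice volumes) are not what is actually needed and would likely spoil the constant. It is also cleaner to work with rays from $x^\ast$ rather than full chords. For $\theta\in S^{k-1}$ let $r(\theta)$ denote the distance from $x^\ast$ to $\partial V$ along direction $\theta$, and let $e(\theta)$ be the $1$-dimensional Lebesgue measure of $\{t\in[0,r(\theta)]:x^\ast+t\theta\in w\}$. Suppose toward a contradiction that $e(\theta)/r(\theta)<\lambda:=\frac{1}{k}\,|w|/|V|$ for every $\theta$. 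Writing $|w|$ in polar coordinates about $x^\ast$ and noting that, for a fixed value of $e(\theta)$, the inner integral $\int_0^{r(\theta)}\mathbf{1}_w(x^\ast+t\theta)\,t^{k-1}\,dt$ is maximised when the mass is pushed to the far end of the ray, one obtains
\[ |w|\ \le\ \frac{1}{k}\int_{S^{k-1}}\Bigl[r(\theta)^k-(r(\theta)-e(\theta))^k\Bigr]\,d\sigma(\theta)\ <\ \bigl[1-(1-\lambda)^k\bigr]\,|V|, \]
using $\frac{1}{k}\int_{S^{k-1}}r(\theta)^k\,d\sigma(\theta)=|V|$. Bernoulli's inequality $(1-\lambda)^k\ge 1-k\lambda$ then forces $|w|<k\lambda\,|V|=|w|$, a contradiction. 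Taking a direction $\theta_0$ with $e(\theta_0)/r(\theta_0)\ge\lambda$ and applying one-dimensional Remez on the interval $[0,r(\theta_0)]$ (which contains $x^\ast$ as an endpoint, so $\sup_V|p|=|p(x^\ast)|\le\sup_{[0,r(\theta_0)]}|p|$) yields precisely $\sup_V|p|\le(4k|V|/|w|)^d\sup_w|p|$. So your reduction strategy is the right one, but the existence of the good direction requires this elementary rearrangement-plus-Bernoulli estimate rather than a Brunn--Minkowski argument, and without it the proof is incomplete.
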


\begin{theorem}[Besicovitch Covering Theorem, Theorem 18.1c in \cite{DiBenedetto}]\label{thm:BesicovitchCovering}
Let $E$ be a bounded subset of $\mathbb{R}^k$ and let $\mathcal{F}$ be a collection of cubes in $\mathbb{R}^k$ with faces parallel to the coordinate planes and such that each $x\in E$ is the center of a nontrivial cube $Q(x)$ belonging to $\mathcal{F}$. Then there exists a countable collection $\{x_n\}$ of points $x_n\in E$, and a corresponding collection of cubes $\{Q(x_n)\}$ in $\mathcal{F}$ such that
\begin{equation}\label{eq:besicovitchEquation}
E\subset\bigcup Q(x_n) \text{ and }\sum\chi_{Q(x_n)}\leq 4^k.
\end{equation}
\end{theorem}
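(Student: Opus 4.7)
The plan is to combine a Vitali-type greedy selection (to guarantee covering) with a purely geometric coordinate-wise packing argument (to control multiplicity). Write $s(Q)$ for the sidelength of an axis-aligned cube $Q$.

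First I would reduce to the case where sidelengths are bounded. Set $s_0 := \sup_{Q \in \mathcal{F}} s(Q)$. If $s_0 = \infty$, then since $E$ is bounded one either covers $E$ with a single huge cube (trivial) or partitions $\mathcal{F}$ into dyadic layers $\mathcal{F}_j = \{Q \in \mathcal{F} : s(Q) \in [2^j, 2^{j+1})\}$, applies the bounded-sidelength argument to each layer, and sums the multiplicities (adjusting the constant if necessary). So assume $s_0 < \infty$.

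With $s_0 < \infty$, define the selection inductively: choose $Q(x_1)$ with $s(Q(x_1)) \ge s_0/2$; having chosen $x_1,\dots,x_{n-1}$, set $E_n = E \setminus \bigcup_{i<n} Q(x_i)$, stop if $E_n = \emptyset$, otherwise pick $x_n \in E_n$ with $s(Q(x_n)) \ge \tfrac{1}{2} \sup\{s(Q(x)) : x \in E_n\}$. By construction $x_n \notin Q(x_i)$ for every $i < n$, and the selected collection is at most countable. For the covering assertion in \eqref{eq:besicovitchEquation}, suppose some $x \in E$ is never covered. Then $x \in E_n$ for every $n$, so $s(Q(x_n)) \ge s(Q(x))/2$ for every $n$, producing infinitely many cubes of uniformly positive sidelength with centers in the bounded set $E$; this contradicts the multiplicity bound proved next, since the total Lebesgue measure of these cubes would be infinite while sitting inside a bounded region with bounded overlap.

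For the multiplicity bound, fix $y \in \mathbb{R}^k$ and let $Q(x_{n_1}), Q(x_{n_2}),\dots$ be the selected cubes containing $y$, listed in order of selection. The greedy rule forces $s(Q(x_{n_{j+1}})) \le 2\, s(Q(x_{n_j}))$ (else $x_{n_{j+1}}$ would have been selected earlier), while $|y - x_{n_j}|_\infty \le s(Q(x_{n_j}))/2$ for all $j$, and $|x_{n_{j'}} - x_{n_j}|_\infty > s(Q(x_{n_j}))/2$ whenever $j<j'$ (since $x_{n_{j'}} \notin Q(x_{n_j})$). Analyzing these inequalities one coordinate at a time partitions the admissible positions of the centers around $y$ into at most four ``slots'' per axis (close/far on each side, at two comparable scales), yielding the bound $4^k$. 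The main obstacle is pinning down the sharp constant $4^k$: a qualitative bound $C(k)$ is immediate from compactness and the sidelength comparability, but the explicit constant in \eqref{eq:besicovitchEquation} demands the careful per-axis bookkeeping just described, together with attention to strict-versus-weak inequalities at cube boundaries.
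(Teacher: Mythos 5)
This statement is not proved in the paper at all; it is quoted verbatim as Theorem 18.1c of DiBenedetto's \emph{Real Analysis} and used as a black box. So there is no internal proof to compare against, and your attempt should be judged on its own terms.

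Your outline correctly reproduces the standard Vitali-type greedy selection (pick a cube of at least half the supremal available sidelength, discard covered points, repeat), and the resulting facts are right: for selected cubes $Q_{n_j}$ containing a fixed $y$, listed in selection order, one has $s(Q_{n_{j'}}) \le 2\,s(Q_{n_j})$ for $j<j'$, $\lVert x_{n_{j'}} - x_{n_j}\rVert_\infty > s(Q_{n_j})/2$, and $\lVert y - x_{n_j}\rVert_\infty \le s(Q_{n_j})/2$. The logical ordering (bounded multiplicity first, then the covering claim by a measure argument) is also sound, and the unbounded-sidelength case is indeed trivial since a single large cube covers a bounded $E$ --- though your proposed ``dyadic layer'' fallback is both unnecessary and wrong, because summing multiplicities over infinitely many layers would destroy any finite bound.

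The genuine gap is the multiplicity bound $4^k$. The ``four slots per axis, factored over $k$ axes'' heuristic does not work, because the separation hypothesis is an $\ell^\infty$ condition: $\lVert x_{n_{j'}} - x_{n_j}\rVert_\infty > s(Q_{n_j})/2$ guarantees that \emph{some} coordinate is far, not that every coordinate is, and the threshold $s(Q_{n_j})/2$ varies with $j$ and can be much smaller than $s(Q_{n_1})/2$. Consequently, two centers can occupy the same coordinate-wise ``slot'' in every axis while still satisfying all the separation constraints; a putative signature map into $\{1,2,3,4\}^k$ is not injective. A concrete check already in $\mathbb{R}^2$ shows one can place three or more admissible centers in a single closed quadrant around $y$, so any argument that assigns a bounded per-axis budget independently is doomed. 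The actual proof in DiBenedetto handles this by grouping the overlapping cubes into size-comparable classes and then running a packing/counting argument within and across classes (or, equivalently, by a more delicate geometric case analysis on how a center can sit just outside a previously chosen cube while still containing $y$); the $4^k$ is the output of that bookkeeping, not of a product-over-axes count. As written, your sketch identifies the right ingredients but does not reach the stated constant, and you acknowledge as much. If all you need is the qualitative conclusion ``bounded overlap $\le C(k)$'' --- which is in fact all the paper uses, since $4^k$ is simply absorbed into the constant $12^k$ in Lemma \ref{lem:measureEstimates} --- then a cruder but fully rigorous route (e.g., the Besicovitch theorem for balls plus equivalence of norms) would close the argument more cleanly than trying to recover $4^k$ directly.
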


\begin{remark}
The second equation of \eqref{eq:besicovitchEquation} gives that \emph{each} point $x\in\mathbb{R}^k$ is covered by at most $4^k$ cubes out of $\{Q(x_n)\}$. Equivalently, at most $4^k$ those cubes overlap at each given point in $\mathbb{R}^k$.
\end{remark}


In the following context, we denote $C_R=[-R,R]^k$ and $X_s=\Ad(\exp(U_s))X$ for $X\in\mathfrak{g}$, $U_s=\sum s_jU_j\in\mathfrak{u}$ and $s=(s_1,\ldots,s_k)\in\mathbb{R}^k$.
\begin{lemma}\label{lem:measureEstimates}
Suppose $\eta>0$ and let $U\in\mathfrak{u}$ and $X\in\mathfrak{g}$ be such that $\|X\|<\eta$. If $\|X_s\|=\eta$ for $s\in\partial C_R$ and $\|X_s\|<\eta$ for all $s\in \mathring{C_R}$, then there exists $c_1>0$ such that
$$|\{s\in C_R:\|\Ad(\exp(U_s))X\|\leq c_1\eta\}|<\frac{|C_R|}{10\cdot12^k}.$$
\end{lemma}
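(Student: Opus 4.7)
The plan is to prove the lemma by contradiction using the Brudnyi-Ganzburg inequality (Theorem~\ref{thm:BrudnyiGanzburg}) applied to the coordinate polynomials of $s\mapsto \Ad(\exp(U_s))X$. I will suppose that the set $w:=\{s\in C_R:\|X_s\|\leq c_1\eta\}$ satisfies $|w|\geq |C_R|/(10\cdot 12^k)$ and show that, for a suitable absolute choice of $c_1$, this forces $\sup_{s\in C_R}\|X_s\|<\eta$, contradicting the hypothesis that $\|X_s\|=\eta$ for $s\in\partial C_R$.

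First I fix any linear basis $\{e_1,\dots,e_N\}$ of $\mathfrak{g}$ and expand $X_s=\sum_\alpha p_\alpha(s)e_\alpha$. By Lemma~\ref{lem:BasicLieTheory} together with the nilpotency of $\ad_{U_s}$, the map $s\mapsto X_s$ is polynomial in $s_1,\dots,s_k$. Decomposing $X=\sum_\ell X^{(\ell)}$ along the splitting $\mathfrak{g}=\bigoplus_\ell\mathfrak{g}_\ell$ of Lemma~\ref{lem:nothing-killed} and applying Lemma~\ref{lem:projectionDegree} to each piece shows that every $p_\alpha$ has total degree at most $m$, where $m$ depends only on $\mathfrak{g}$ and $\mathfrak{u}$.

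Next, fix a point $s^*\in\partial C_R$, so that $\|X_{s^*}\|=\eta$. By equivalence of norms on $\mathfrak{g}$, there is a constant $c_0>0$ (independent of $X$, $U$, $\eta$) and an index $\alpha^*$ with $|p_{\alpha^*}(s^*)|\geq c_0\eta$. Similarly, there is a constant $C_1>0$ with $|p_\alpha(s)|\leq C_1\|X_s\|$ for every $\alpha$ and $s$. Applying Theorem~\ref{thm:BrudnyiGanzburg} to $p_{\alpha^*}\in\mathcal{P}_{m,k}(\mathbb{R})$ with convex body $V=C_R$ and measurable subset $w$, the contradictory assumption $|w|\geq|C_R|/(10\cdot 12^k)$ yields
\[
c_0\eta\;\leq\;\sup_{s\in C_R}|p_{\alpha^*}(s)|\;\leq\;\left(\frac{4k|C_R|}{|w|}\right)^{m}\sup_{s\in w}|p_{\alpha^*}(s)|\;\leq\;(40k\cdot 12^k)^{m}\,C_1 c_1\eta,
\]
since on $w$ we have $|p_{\alpha^*}(s)|\leq C_1\|X_s\|\leq C_1 c_1\eta$. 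Cancelling $\eta$ and choosing $c_1:=c_0/\bigl(2C_1(40k\cdot 12^k)^m\bigr)$ produces the desired contradiction.

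The main subtlety is ensuring that $c_1$ depends only on $\mathfrak{g}$, $\mathfrak{u}$, and $k$, not on $X$, $U$, $\eta$, or $R$. This is exactly what the Brudnyi-Ganzburg inequality delivers: the degree bound $m$ from Lemma~\ref{lem:nothing-killed} is a structural invariant of the pair $(\mathfrak{g},\mathfrak{u})$, and the volume ratio in the Brudnyi-Ganzburg factor is universally bounded by $10\cdot 12^k$ under the hypothesis on $|w|$. The Besicovitch covering theorem is not required here since we are working on a single orbit; it will become essential only when aggregating such pointwise estimates into the global Hamming-versus-Bowen comparison in the next step.
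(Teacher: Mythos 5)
Your proof is correct and uses the same central tool as the paper, namely the Brudnyi--Ganzburg inequality applied to a coordinate polynomial of $s\mapsto\Ad(\exp(U_s))X$ to force a contradiction with the hypothesis on $\partial C_R$. The structure of your argument (set up $w$, obtain a large coordinate at the boundary, bound it via the polynomial inequality, conclude $|w|$ is small for $c_1$ small) matches the paper's.

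The one place where you genuinely deviate is the extraction of the ``large'' coordinate. The paper projects $X_s$ onto $\mf g_0$, takes a basis $\theta_1,\dots,\theta_{n_0}$ of $\mf g_0^*$, and asserts (``Since the points separate on the boundary\dots'') that some $\theta_{i_0}(\pi_0(X_s))$ is $\gtrsim\eta$ somewhere on $\partial C_R$; this step is left somewhat implicit and, read literally, seems to need an auxiliary argument that the $\mf g_i$-components with $i>0$ cannot carry all the mass of $X_{s}$ at the boundary. You instead fix a boundary point $s^*$, expand $X_{s^*}$ in a full basis of $\mf g$, and invoke equivalence of norms on the finite-dimensional space $\mf g$ to produce a coordinate $p_{\alpha^*}(s^*)\ge c_0\eta$ with $c_0$ depending only on $\dim\mf g$ and the fixed basis. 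This is simpler, manifestly uniform, and sidesteps the issue entirely. The degree bound $m$ from the splitting $\mf g=\bigoplus_{\ell}\mf g_\ell$ and Lemma~\ref{lem:projectionDegree} is correctly justified (any basis change is linear and cannot increase polynomial degree), the constant $C_1$ is again a norm-equivalence constant, and the final choice $c_1:=c_0/\bigl(2C_1(40k\cdot 12^k)^m\bigr)$ depends only on $(\mf g,\mf u,k)$ as required. Your closing remark about the role of Besicovitch is also accurate: it is indeed not needed here and enters only in Lemma~\ref{lem:connectionOfHammingAndBowen}.
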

\begin{proof}
Let $w:=\{s\in[-R,R]^k:\|\Ad(\exp(U_s))X\|\leq c_1\eta\}$, $V=C_R$ and $\theta_i$, $i=1,\ldots,n_0$ be a basis of $\mathfrak{g}_0^*$ and then denote $p_i(s)=\theta_i(\pi_0(\Ad(\exp(U_s))X))$.

Since the points separate on the boundary, there exists $i_0\in\{1,\ldots,n_0\}$ such that $\sup_{s\in\partial C_R}p_{i_0}(s)\geq\frac{1}{c_2n_0}\eta$ for a uniform constant $c_2>0$. By Theorem \ref{thm:BrudnyiGanzburg} we have
\begin{equation}\label{eq:BG}
\sup_{V}p_{i_0}(s)\leq\left(\frac{4k|V|}{|w|}\right)^d\sup_wp_{i_0}(s).
\end{equation}

Then we obtain by our construction that $\sup_w|p_{i_0}(s)|\leq\sup_w\|X_s\|\leq c_1\eta$ and $$\sup_V p_{i_0}(s)\geq\sup_{s\in\partial C_R}p_{i_0}(s)\geq\frac{1}{c_2n_0}\eta.$$
As a result, we get from equation (\ref{eq:BG}) that
$$\frac{1}{c_2n_0}\eta\leq\left(\frac{4k|C_R|}{|w|}\right)^dc_1\eta.$$
Therefore we obtain that
$$|w|\leq4k(n_0c_1c_2)^{\frac{1}{d}}|C_R|.$$

By choosing $c_1$ sufficiently small we can guarantee that $4k(n_0c_1c_2)^{\frac{1}{d}}<\frac{1}{10\cdot12^k}$ and thus we conclude the statement.
\end{proof}

For $\eta>0$, let $\mathcal{P}_{\eta}$ be a finite partition of $G/\Gamma$ obtained by Remark \ref{rem:HomoWellPartition} with $K=G/\Gamma$, $\epsilon=\eta$ and $\delta=\frac{1}{100}$. By its construction, we know that if $\xi$ is a atom of $\mathcal{P}_{\eta}$, then $$V^{(\frac{\eta}{2})}(z_1)\subset\xi\subset V^{(\eta)}(z_2),$$ where $z_1,z_2\in G/\Gamma$. Notice that $\mathcal{P}_{\eta}$ converges to the decomposition into points as $\eta\to0$ and thus it suffices to compute the slow entropy with respect to a family of partitions $\mathcal{P}_{\eta_n}$ as $\eta_n\to0$. In the following context, we pick $\eta<\frac{\inj(G/\Gamma)}{10}$, $c_1$ as in Lemma \ref{lem:measureEstimates} and compute the slow entropy with respect to $\mathcal{P}_{c_1\eta}$ for some fixed $\eta$. Then the results follows as $\eta\to0$.
\begin{lemma}\label{lem:connectionOfHammingAndBowen}
There exists $\epsilon_0$, $R_0>0$ such that for every $\epsilon<\epsilon_0$ and every $R>R_0$, the following holds: if the Hamming distance satisfies the inequality $\overline{d}^{R}_{\mathcal{P}_{c_1\eta}}(x,y)<\epsilon$, then $y\in \Bow(x,R,2\eta)$.
\end{lemma}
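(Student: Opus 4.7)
The plan is to argue by contrapositive: assuming $y\notin\Bow(x,R,2\eta)$, we produce a Hamming-bad set of times in $C_R=[-R,R]^k$ of measure at least $(1-\delta)|C_R|$ with $\delta = 1/(10\cdot 3^k)$; taking $\epsilon_0 := 1-\delta$ then yields the lemma. Since $\eta < \inj(G/\Gamma)/10$, one can track along $C_R$ a Lie-algebra displacement $X(s)\in\mathfrak g$ with $s\cdot y = \exp(X(s))\,s\cdot x$; commutativity of $\mathfrak u$ combined with Lemma \ref{lem:BasicLieTheory} yields the equivariance
\[
X(s_0+s') = \Ad(\exp(U_{s'}))\,X(s_0),
\]
so that for any reference point $s_0$ the family $s'\mapsto X(s_0+s')$ is exactly the object controlled by Lemma \ref{lem:measureEstimates}. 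Because atoms of $\mathcal{P}_{c_1\eta}$ have diameter at most $c_1\eta$ and the Riemannian metric is comparable to $\|\cdot\|$ near the identity, after absorbing a harmless constant into $c_1$ we may assume that $\|X(s)\|>c_1\eta$ forces $s\cdot x$ and $s\cdot y$ into distinct atoms, i.e., makes $s$ Hamming-bad.

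Set $f(s):=\|X(s)\|$ and $G:=\{s\in C_R:f(s)<\eta\}$. The failure of the Bowen condition supplies $s_*\in C_R$ with $f(s_*)\ge 2\eta$, so $G\ne C_R$. For each $s_0\in G$, let $r(s_0)>0$ be the largest radius such that $f<\eta$ throughout the open cube $\mathring{C_{r(s_0)}(s_0)}$; continuity of $f$ together with the existence of $s_*$ makes $r(s_0)$ finite, and maximality forces $f = \eta$ at some point of $\partial C_{r(s_0)}(s_0)$. The Besicovitch Covering Theorem (Theorem \ref{thm:BesicovitchCovering}) applied to $\{C_{r(s_0)}(s_0)\}_{s_0\in G}$ then extracts a countable subcover $\{C_{r_i}(s_i)\}_i$ of $G$ with pointwise multiplicity at most $4^k$. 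On each $C_{r_i}(s_i)$, after translating $s_i$ to the origin the hypotheses of Lemma \ref{lem:measureEstimates} are met (applied to $X(s_i)$), giving
\[
|\{s\in C_{r_i}(s_i):f(s)\le c_1\eta\}|\le \tfrac{|C_{r_i}(s_i)|}{10\cdot 12^k}.
\]
Summing over $i$ and invoking the multiplicity bound yields $|\{s\in G:f(s)\le c_1\eta\}|\le 4^k|C_R|/(10\cdot 12^k)=|C_R|/(10\cdot 3^k)$; since the complement $C_R\setminus G\subseteq\{f\ge\eta\}\subseteq\{f>c_1\eta\}$ is automatically Hamming-bad, the total Hamming-bad measure is at least $|C_R|(1-\delta)$, so $\overline{d}^R_{\mathcal{P}_{c_1\eta}}(x,y)\ge 1-\delta>\epsilon$, the sought contradiction.

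The principal obstacle, as emphasized in the introduction, is that in higher rank there is no canonical ``first divergence time'' as in the rank-one proof of \cite{KanigowskiVinhageWei}; the Besicovitch covering step is precisely what replaces that ordering, while the equivariance identity for $X(s)$ is what permits Lemma \ref{lem:measureEstimates} to be invoked at every basepoint $s_0\in G$ in a shift-uniform manner. Two secondary technical points are (a) cubes $C_{r(s_0)}(s_0)$ that poke past $\partial C_R$ when $s_0$ is near the boundary, handled by taking $R\ge R_0$ sufficiently large so that the boundary layer contributes negligibly (and, if need be, permitting the maximal cube to sit in a slight enlargement of $C_R$), and (b) the comparability constants between $\|X(s)\|$ and the ambient Riemannian distance used to define $\mathcal{P}_{c_1\eta}$, which are absorbed into $c_1$ at the outset.
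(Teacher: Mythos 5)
Your proposal is essentially the paper's argument recast in contrapositive form: you track the Lie-algebra displacement $X(s)$ along the action, define at each Hamming-good basepoint the maximal cube on which $\|X(\cdot)\|$ stays below $\eta$, apply Besicovitch to this family, and invoke Lemma \ref{lem:measureEstimates} on each cube of the subcover to bound the Hamming-good set. The paper does exactly the same construction, phrased instead as a proof by contradiction (``suppose $r(s)<2R$ for every $s\in M^R_{c_1\eta}(x,y)$'') and using the set $M^{3R}_\eta(x,y)$ to cap the cubes inside $C_{3R}$; your deduction that $r(s_0)\le 2R$ from the existence of $s_*$ is the cleaner way to see the same bound, and your explicit equivariance identity for $X(s)$ is what the paper invokes more tersely via $\eta<\inj(G/\Gamma)$.

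One arithmetical slip: since the Besicovitch cubes are centered in $C_R$ with radius at most $2R$, they live in $C_{3R}$, so the bounded-multiplicity estimate gives $\sum_i |C_{r_i}(s_i)| \le 4^k|C_{3R}| = 12^k|C_R|$, \emph{not} $4^k|C_R|$. Consequently the Hamming-good set has measure at most $\frac{12^k|C_R|}{10\cdot 12^k}=\frac{|C_R|}{10}$ rather than $\frac{|C_R|}{10\cdot 3^k}$. This is harmless — you still get $\overline{d}^R_{\mathcal{P}_{c_1\eta}}(x,y)\ge 9/10$, so any $\epsilon_0<9/10$ works — but the numbers should be corrected.
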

\begin{proof}
Let $M_{\eta}^R(x,y)=\{s\in C_R:s\cdot y\in\overline{V^{(\eta)}(s\cdot x)}\}$, where $C_R=[-R,R]^k$. Then for every $s\in M_{c_1\eta}^R(x,y)$ define $r(s)=\sup\{r\geq0:(s+[-r,r]^k)\subset M_{\eta}^{3R}(x,y)\}$, where $c_1$ is from Lemma \ref{lem:measureEstimates}. Notice $0<c_1<1$ implies that $r(s)>0$ for all $s\in M_{c_1\eta}^R(x,y)$. To finish the proof it is enough to show that there exists $s\in C_R$ such that $r(s)\geq 2R$.

Suppose that for every $s\in M_{c_1\eta}^R(x,y)$ we have $r(s)<2R$. Then for every $s\in M_{c_1\eta}^R(x,y)$ we denote $C(s)=(s+[-r(s),r(s)]^k)$. Recall that  Besicovitch Covering Theorem (Theorem \ref{thm:BesicovitchCovering}) gives us a subcover $\mathscr{B}=\{C(s_i)\}_{i=1}^{N(R)}$ of $\{C(s):s\in M_{c_1\eta}^R(x,y)\}$  such that for every $s$ there are at most $4^k$ elements from $\mathscr{B}$ containing $s$, where $N(R)\in \mathbb{N}\cup \{\infty\}$ depends on $R$.

Recall that $x,y$ are $\epsilon$ Hamming close on $C_R$ with respect to the partition $\mathcal{P}_{c_1\eta}$. By defining $H_R(x,y)=\{s\in C_R:\text{$s\cdot x$ and $s\cdot y$ are in the same atom of $\mathcal{P}_{c_1\eta}$.}\}$, we have
\begin{equation}\label{eq:HammingLarge}
|H_R(x,y)|\geq(1-\epsilon)|C_R|.
\end{equation}

Then by the definition of $M_{c_1\eta}^R(x,y)$ we have
\begin{equation}\label{eq:geometricMeasure1}
H_R(x,y)\subset M_{c_1\eta}^R(x,y).
\end{equation}

However since $\eta<\inj(G/\Gamma)$ and $r(s)<2R$, Lemma \ref{lem:measureEstimates} implies
\begin{equation}\label{eq:geometricMeasure2}
|\{s\in C(s_i):s\cdot y\in \overline{V^{(c_1\eta)}(s\cdot x)}\}|\leq\frac{1}{10\cdot12^k}|C(s_i)|.
\end{equation}

Also recall that Theorem \ref{thm:BesicovitchCovering} and the properties of the subcover $\mathscr{B}$ give
\begin{equation}\label{eq:geometricMeasure3}
\begin{aligned}
&\sum_{i=1}^{N(R)}|C(s_i)|\leq 4^k|C_{3R}|=12^k|C_R|,\\
M_{c_1\eta}^R(x,y)\subset&\bigcup_{i=1}^{N(R)}\{s\in C(s_i):s\cdot y\in \overline{V^{(c_1\eta)}(s\cdot x)}\}.
\end{aligned}
\end{equation}

Combining  \eqref{eq:geometricMeasure1}, \eqref{eq:geometricMeasure2} and \eqref{eq:geometricMeasure3}, we obtain
$$|H_R(x,y)|\leq\frac{1}{10}|C_R|,$$
which contradicts equation \eqref{eq:HammingLarge} and thus we know that there exists $s_0\in M_{c_1\eta}^R(x,y)$ with $r(s_0)\geq 2R$. This finishes the proof.
\end{proof}

\subsection{Proof of Theorem \ref{thm:main}: metric slow entropy in compact case}
With the help of Proposition \ref{prop:BowenEstimate}, Corollary \ref{cor:topologicalSlowEntropyCompact}, Theorem \ref{thm:slowGoodwyn} and Lemma \ref{lem:connectionOfHammingAndBowen}, we can compute the metric slow entropy of abelian actions on compact homogeneous spaces. Suppose $G/\Gamma$ is compact, then Lemma \ref{lem:connectionOfHammingAndBowen} implies that
\begin{equation}\label{eq:BowenContainCompact}
B_H^{C_R,\mathcal{P}_{c_1\eta}}(x,\epsilon)\subset\Bow(x,R,2\eta).
\end{equation}
Combining \eqref{eq:BowenContainCompact} with  Proposition \ref{prop:BowenEstimate}, we have
\begin{equation}\label{eq:HammingEstimateCompact}
\mu(B_H^{C_R,\mathcal{P}_{c_1\eta}}(x,\epsilon))\leq CR^{-h},
\end{equation}
where $h=\sum_{i=0}^{m}\dim(\mathfrak{g}_i)\cdot i$. Notice that $\eqref{eq:HammingEstimateCompact}$ implies that we need at least $C^{-1}R^h$ different $\epsilon-$Hamming balls with respect to $\mathcal{P}_{c_1\eta}$ to cover $G/\Gamma$. As a result, we obtain that if $G/\Gamma$ is compact,
\begin{equation}\label{eq:metricSlowLowerCompact}
h_{\mu}(\alpha)\geq h_{\mathfrak{u}},
\end{equation}
where $F_n=[-n,n]^k$ and $h_{\mathfrak{u}}$ is defined in Theorem \ref{thm:main}.

Combining \eqref{eq:metricSlowLowerCompact}, Corollary \ref{cor:topologicalSlowEntropyCompact} and Theorem \ref{thm:slowGoodwyn}, we obtain that the metric slow entropy of $\alpha:\mathbb{R}^k\curvearrowright G/\Gamma$ is $h_{\mathfrak{u}}$ when $G/\Gamma$ is compact.

\section{Slow entropy of actions on non-compact homogeneous spaces}\label{sec:mainMetricSlowNoncompact}
In this section we will extend our arguments about metric slow entropy of abelian unipotent actions in Section \ref{sec:mainMetricSlow} to noncompact homogeneous spaces.
\subsection{Hamming balls estimates in noncompact homogeneous spaces}
Suppose $G$ is a connected Lie group, $\Gamma\subset G$ is a lattice (not necessarily co-compact) and $\mu$ is the Haar measure on $G/\Gamma$.

\paragraph{Construction of partitions:} For any given $\eta>0$ and $\delta>0$, let $K\subset G/\Gamma$ be a compact subset with $\mu(K)>1-\delta$ and $\overline{\mathcal{P}}_{\eta,\delta}$ be a partition of $K$ obtained by Remark \ref{rem:HomoWellPartition} with coefficients $\eta$ and $\delta$. Then denote $\mathcal{P}_{\eta,\delta}=\overline{\mathcal{P}}_{\eta,\delta}\cup K^c$, which is a finite measurable partition of $G/\Gamma$.

\begin{lemma}\label{lem:connectionOfHammingAndBowenNonCompact}
Suppose $K$ is a compact subset of $G/\Gamma$ such that $\mu(K)>1-\delta$ for some $\delta\in(0,\frac{1}{100})$ and let $\eta\in(0,\frac{\inj(K)}{10})$. Let $\mathcal{P}_{c_1\eta,\delta}$ be defined as above, where $c_1$ is defined in Lemma \ref{lem:measureEstimates}. Then there exist $R_1,\epsilon_0>0$ and a set $L\subset K$ with $\mu(L)>1-4\delta$ such that the following property holds true: for $0<\epsilon<\epsilon_0$, $R\geq R_1$ and $x\in L$, if $d_{\mathcal{P}_{c_1\eta,\delta}}^R(x,y)<\epsilon$, then there exists a lift of $x$ in $G$, denoted as $\bar{x}$, such that  $$\overline{y}\in\Bow(\overline{x},R,2\eta),$$ where $\bar{y}$ is the lift of $y$ minimizing the distance from $\bar{x}$ in $G$.
\end{lemma}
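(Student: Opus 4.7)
The plan is to adapt the argument of Lemma \ref{lem:connectionOfHammingAndBowen} to the non-compact setting by first selecting a large subset $L \subset K$ on which orbits of $x$ spend most of their time inside $K$, and then lifting the Besicovitch--Brudnyi--Ganzburg argument to the universal cover $G$ using an anchor in a jointly-good set. To produce $L$, I would apply the mean ergodic theorem for the $\R^k$-action to $\chi_{K^c}$ (whose space average is $< \delta$): Markov's inequality together with Egorov's theorem yields $R_1 > 0$ and a measurable $L \subset K$ with $\mu(L) \geq 1 - 4\delta$ such that
\[
|\{s \in C_R : s \cdot x \notin K\}| \leq \sqrt{\delta}\, |C_R|
\]
for every $x \in L$ and every $R \geq R_1$.

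For such $x \in L$ and $y$ satisfying $d^R_{\mathcal{P}_{c_1\eta,\delta}}(x,y) < \epsilon$, I would introduce the jointly-good set
\[
G_R = \{s \in C_R : \gamma_{\mathcal{P}}(s, x) = \gamma_{\mathcal{P}}(s, y) \text{ and } s \cdot x \in K\}.
\]
Because the atoms of $\overline{\mathcal{P}}_{c_1\eta, \delta}$ lie in $K$ and have diameter at most $c_1\eta$, for every $s \in G_R$ both orbit points lie in a common atom inside $K$, giving $d_{G/\Gamma}(s\cdot x, s\cdot y) < c_1\eta$ and $|G_R| \geq (1 - \epsilon - \sqrt{\delta})|C_R|$; with $\epsilon_0$ and $\delta$ small this forces $|G_R| > |C_R|/10$. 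Fix any preliminary lift $\bar{x}$ of $x$ and pick an anchor $s_0 \in G_R$. Since $\inj(K) > 10\eta$, there is a unique lift $\tilde{z}$ of $s_0 \cdot y$ within $c_1\eta$ of $s_0 \cdot \bar{x}$, and I define $\bar{y}$ to be the lift of $y$ with $s_0 \cdot \bar{y} = \tilde{z}$.

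The goal is then to establish $d_G(s \cdot \bar{x}, s \cdot \bar{y}) < 2\eta$ for every $s \in C_R$: once proved, specialising at $s = 0$ yields $d_G(\bar{x}, \bar{y}) < 2\eta < \inj(x)$, forcing $\bar{y}$ to be the closest lift of $y$ to $\bar{x}$. I would argue by contradiction, following Lemma \ref{lem:connectionOfHammingAndBowen} but now working in $G$ with these specific lifts: if some $s^* \in C_R$ had $d_G(s^*\cdot\bar{x}, s^*\cdot\bar{y}) \geq 2\eta$, then for every $s \in M^R_{c_1\eta}(\bar{x}, \bar{y}) := \{s : d_G(s\cdot\bar{x}, s\cdot\bar{y}) \leq c_1\eta\}$ the quantity $r(s) := \sup\{r : s + [-r,r]^k \subset M^{3R}_\eta(\bar{x},\bar{y})\}$ satisfies $r(s) < 2R$. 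The Besicovitch covering theorem applied to the cubes $s + [-r(s), r(s)]^k$, combined with Lemma \ref{lem:measureEstimates} on each such cube (whose boundary touches $\{d_G = \eta\}$ in the interior of $C_{3R}$), then gives $|M^R_{c_1\eta}(\bar{x}, \bar{y})| \leq |C_R|/10$.

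The main obstacle is that, a priori, $G_R$ need not be contained in $M^R_{c_1\eta}(\bar{x}, \bar{y})$: at some $s \in G_R$ the unique lift of $s \cdot y$ lying within $c_1\eta$ of $s \cdot \bar{x}$ could be a $\Gamma$-translate $s \cdot \bar{y}\gamma$ rather than $s \cdot \bar{y}$ itself, because injectivity drops when the orbit visits the cusp between the anchor time $s_0$ and the time $s$. I plan to handle this by enumerating contributions over $\gamma \in \Gamma$, setting
\[
\Omega_\gamma = \{s \in C_{3R} : d_G(s\cdot\bar{x}, s \cdot \bar{y}\gamma) < \eta\},
\]
and running the Besicovitch--Brudnyi--Ganzburg estimate separately on each $\Omega_\gamma$ to obtain $|G_R \cap \Omega_\gamma| \leq |\Omega_\gamma|/(10\cdot 3^k)$. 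Crucially, $\inj(K) > 10\eta$ prevents two distinct lifts of $s \cdot y$ from both being within $\eta$ of $s \cdot \bar{x}$ whenever $s \cdot x \in K$, so the sets $\Omega_\gamma \cap \{s \cdot x \in K\}$ are pairwise disjoint; since $G_R \subset \{s \cdot x \in K\}$ and $G_R \subset \bigcup_\gamma \Omega_\gamma$ (every $s \in G_R$ has some close lift), summing the estimates produces $|G_R| \leq |C_R|/10$, contradicting $|G_R| > |C_R|/10$. This contradiction rules out $s^*$, yielding $C_R \subset M^{3R}_\eta(\bar{x}, \bar{y})$ and completing the proof.
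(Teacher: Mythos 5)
Your overall strategy matches the paper's: produce $L$ via an ergodic theorem (you use the mean ergodic theorem with Markov/Egorov; the paper invokes Lindenstrauss's pointwise ergodic theorem---either works), pass to the universal cover where the injectivity radius is infinite so that Lemma~\ref{lem:measureEstimates} applies on every cube, and use $\eta<\inj(K)/10$ to identify the relevant lift of $s\cdot y$ whenever $s\cdot x\in K$. You also correctly flag the central difficulty: the lift of $s\cdot y$ nearest $s\cdot\bar x$ can jump by a $\Gamma$-translate as the orbit passes through the cusp. But the accounting in your $\gamma$-decomposition has a genuine gap. Even granting $|G_R\cap\Omega_\gamma|\le|\Omega_\gamma|/(10\cdot 3^k)$ and the disjointness of the sets $G_R\cap\Omega_\gamma$, summing over $\gamma$ gives only $|G_R|\le\tfrac{1}{10\cdot 3^k}\sum_\gamma|\Omega_\gamma|$. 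The $\Omega_\gamma$ themselves need not be disjoint once the orbit leaves $K$: deep in the cusp the injectivity radius degenerates and many $\Gamma$-translates of $s\cdot\bar y$ can simultaneously lie within $\eta$ of $s\cdot\bar x$, so $\sum_\gamma|\Omega_\gamma|$ has no a priori bound in terms of $|C_{3R}|$. The desired contradiction $|G_R|\le|C_R|/10$ therefore does not follow from this bookkeeping.

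The paper sidesteps this by re-anchoring the lift per Besicovitch cube, not per $\gamma$. It covers $\widetilde M^R_{c_1\eta}(x,y)=\{s\in C_R : s\cdot y\in\overline{V^{(c_1\eta)}(s\cdot x)},\ s\cdot x\in K\}$ by cubes $\widetilde C(\tilde s_i)$ whose radii are escape times for the pair of lifts $\overline{\tilde s_i\cdot x},\overline{\tilde s_i\cdot y}$ matched at time $\tilde s_i$; a \emph{single} application of Besicovitch then yields $\sum_i|\widetilde C(\tilde s_i)|\le 4^k|C_{3R}|=12^k|C_R|$. Inside each cube, the injectivity bound on $K$ forces every $s\in\widetilde M^R_{c_1\eta}\cap\widetilde C(\tilde s_i)$ to satisfy the $c_1\eta$-estimate for that cube's anchored lift, and Lemma~\ref{lem:measureEstimates} caps the proportion of each cube where this holds at $(10\cdot 12^k)^{-1}$, giving $|\widetilde M^R_{c_1\eta}|\le|C_R|/10$. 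Your argument becomes correct if, to each $s\in G_R$, you assign its unique $\gamma(s)$ together with the corresponding escape cube and apply Besicovitch once to the combined family $\{s+[-r(s),r(s)]^k:s\in G_R\}$---that is precisely the paper's re-anchoring in different notation. One further stylistic difference: the paper does not fix an anchor $s_0$ in advance; the contradiction argument \emph{produces} some $s_0$ with $\tilde r(s_0)\ge 2R$, and then $C_R\subset s_0+C_{2R}$ together with the lift-uniqueness at $\Phi_{-s_0}$ of the anchored pair finishes the proof.
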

\begin{proof}
Let $L_0(R_1,\epsilon)\subset G/\Gamma$ be the set of all points $x\in G/\Gamma$ such that for all $R\geq R_1$ we have
$$|s\in C_R:s\cdot x\in K|\geq(1-\epsilon)\mu( K)|C_R|.$$
Then by the pointwise ergodic theorem (Theorem 1.3 in \cite{Lindenstrauss}) and definition of well partitionable, for any given $\epsilon>0$, we can find $R_1$ sufficiently large such that $\mu(L_0(R_1,\epsilon))>1-2\delta$. In the next step, we introduce $L=L_0(R_1,\epsilon)\cap K$ and we obtain that $\mu(L)>1-4\delta$.

Let $R\geq R_1$, $x\in L$ and $y$ satisfy $d_{\mathcal{P}_{c_1\eta,\delta}}^R(x,y)<\epsilon$. Let $\overline{s\cdot x},\overline{s\cdot y}\in G$ be the lifts of $s\cdot x,s\cdot y$ to $G$ which minimize their distance in $G$. Define $$\widetilde{M}_{c_1\eta}^R(x,y)=\left\{s\in C_R:s\cdot y\in\overline{V^{(c_1\eta)}(s\cdot x)}\text{ and }s\cdot x\in K\right\},$$
$$\overline{M}_{\eta}^{3R}(x,y)=\{s\in C_{3R}:\Phi_s\cdot \bar{y}\in\overline{V^{(\eta)}(\Phi_s\cdot \bar{x})}\}.$$
Then for every $s\in\widetilde{M}_{c_1\eta}^R(x,y)$,  let $\tilde{r}(s)=\sup\{r\geq0:[-r,r]^k\subset \overline{M}_{\eta}^{3R}(s\cdot x,s\cdot y)\}$, where $c_1$ comes from Lemma \ref{lem:measureEstimates}. It is worth to point out that $\tilde{r}(s)>0$ for all $s\in\widetilde{M}_{c_1\eta}^{R}(x,y)$ as $c_1\in(0,1)$ and $s\cdot x\in K$.

Recall that $\epsilon\in(0,\epsilon_0)$, $x\in L$,
$x,y$ are $\epsilon$-Hamming close for $s\in C_R$ and the atom of $\mathcal{P}_{c_1\eta,\delta}$ other than $K^c$ is contained in $V^{(c_1\eta)}(z)$ for some $z\in K$, thus we may choose $\epsilon_0$ sufficiently small to guarantee that
\begin{equation}\label{eq:HammingLargeNonCompact}
|\widetilde{M}^R_{c_1\eta}(x,y)|\geq\frac{9}{10}|C_R|.
\end{equation}

Suppose that for every $s\in\widetilde{M}_{c_1\eta}^R(x,y)$ we have $\tilde{r}(s)<2R$. Then for every $s\in\widetilde{M}_{c_1\eta}^R(x,y)$ we denote $\widetilde{C}(s)=(s+[-\tilde{r}(s),\tilde{r}(s)]^k)$. Recall that Besicovitch Covering Theorem (Theorem \ref{thm:BesicovitchCovering}) gives us a subcover $\widetilde{\mathscr{B}}=\{\widetilde{C}(\tilde{s}_i)\}_{i=1}^{\widetilde{N}(R)}$ of $\{\widetilde{C}(s):s\in\widetilde{M}_{c_1\eta}^R(x,y)\}$ such that for every $s\in \widetilde{M}_{c_1\eta}^R(x,y)$, there are at most $4^k$ elements from $\widetilde{\mathscr{B}}$ containing $s$, where $N(R)\in \mathbb{N}\cup \{\infty\}$ depends on $R$.

As the injectivity radius of the universal cover is $\infty$, Lemma \ref{lem:measureEstimates} gives us
$$
|\{s\in\widetilde{C}(\tilde{s}_i):d_G\Big(\Phi_{s-\tilde{s}_i}(\overline{\tilde{s}_i\cdot x}),\Phi_{s-\tilde{s}_i} (\overline{\tilde{s}_i\cdot y})\Big)\leq c_1\eta\}|\leq\frac{1}{10\cdot 12^k}|\widetilde{C}(\tilde{s}_i)|.
$$
Notice that for $s\in \widetilde{C}(\tilde{s}_i)$ the points $\Phi_{s-\tilde{s}_i}(\overline{\tilde{s}_i\cdot x})$ and $\Phi_{s-\tilde{s}_i}(\overline{\tilde{s}_i\cdot x})$ are $\eta$ close on the universal cover. Therefore, for every $s\in\widetilde{C}(\tilde{s}_i)$ for which $s\cdot x\in K$, by $\eta\in(0,\frac{\inj(K)}{10})$, we know that
$d_{G/\Gamma}(s\cdot x,s\cdot y)=d_G\Big(\Phi_{s-\tilde{s}_i}(\overline{\tilde{s}_i\cdot x}),\Phi_{s-\tilde{s}_i} (\overline{\tilde{s}_i\cdot y})\Big)$.
This leads to
$$
\{s\in\widetilde{C}(\tilde{s}_i):d_{G/\Gamma}(s\cdot x,s\cdot y)\leq c_1\eta \text{ and }s\cdot x\in K\}\subset $$$$
\{s\in\widetilde{C}(\tilde{s}_i):d_G\Big(\Phi_{s-\tilde{s_i}}(\overline{\tilde{s}_i\cdot x}),\Phi_{ s-\tilde{s}_i} (\overline{\tilde{s}_i\cdot y})\Big)\leq c_1\eta\}.
$$
Therefore,
\begin{equation}\label{eq:geometricMeasurenonCompact1}
|\{s\in\widetilde{C}(\tilde{s}_i):s\cdot y\in\overline{V^{(c_1\eta)}(s\cdot x)}\text{ and }s\cdot x\in K\}|\leq\frac{1}{10\cdot 12^k}|\widetilde{C}(\tilde{s}_i)|.
\end{equation}

Recall that  by the Besicovitch Covering Theorem (Theorem \ref{thm:BesicovitchCovering}) and the definition of $\widetilde{M}_{c_1\eta}^R(x,y)$ we have
\begin{equation}\label{eq:geometricMeasurenonCompact2}
\begin{aligned}
&\sum_{i=1}^{\widetilde{N}(R)}|\widetilde{C}(\tilde{s}_i)|\leq 4^k|C_{3R}|=12^k|C_R|,\\
\widetilde{M}_{c_1\eta}^R(x,y)\subset&\bigcup_{i=1}^{\widetilde{N}(R)}\{s\in\widetilde{C}(\tilde{s}_i):s\cdot y\in\overline{V^{(c_1\eta)}(s\cdot x)}\text{ and }s\cdot x\in K\}.
\end{aligned}
\end{equation}

Combining \eqref{eq:geometricMeasurenonCompact1} and \eqref{eq:geometricMeasurenonCompact2}, we obtain
$$|\widetilde{M}_{c_1\eta}^R(x,y)|\leq{\frac{1}{10}}|C_R|,$$
which contradicts \eqref{eq:HammingLargeNonCompact} and thus we know that there exists $s_0\in \widetilde{M}_{c_1\eta}^R(x,y)$ such that $\tilde{r}(s_0)\geq 2R$. This implies that \begin{equation}\label{eq:s0BowenClose}
\overline{s_0\cdot y}\in\Bow(\overline{s_0\cdot x}, 2R,2\eta),
\end{equation}
which in particular guarantees that $$d_G(\Phi_{-s_0}\cdot\overline{s_0\cdot y},\Phi_{-s_0}\cdot\overline{s_0\cdot x})<2\eta.$$
Then notice by $\Phi_{-s_0}\cdot\overline{s_0\cdot x}\in\pi^{-1}x$,  $\Phi_{-s_0}\cdot\overline{s_0\cdot y}\in\pi^{-1}y$ and $\eta\in(0,\frac{\inj(K)}{10})$ that above inequality implies
\begin{equation}\label{eq:uniquenessLiftY}
V^{(2\eta)}(\Phi_{-s_0}\cdot\overline{s_0\cdot x})\cap\pi^{-1}y=\{\Phi_{-s_0}\cdot\overline{s_0\cdot y}\}.
\end{equation}

By combining \eqref{eq:s0BowenClose}, \eqref{eq:uniquenessLiftY} and $C_R\subset(s_0+C_{2R})$ for $s_0\in C_R$, we finish the proof of the lemma.

\end{proof}

\subsection{Proof of Theorem \ref{thm:main}: noncompact case}
Let $K=\pi(K_{\delta})$, where $K_{\delta}$ comes from Lemma \ref{lem:injectiveRadiusComapctSet} with coefficient $\delta$. Then Lemma \ref{lem:connectionOfHammingAndBowenNonCompact} guarantees that the $\epsilon$-Hamming ball $B_H^{C_R,\mathcal{P}_{c_1\eta,\delta}}(x,\epsilon)$ centered at $x\in L$ with respect to $\mathcal{P}_{c_1\eta,\delta}$ satisfies \begin{equation}\label{eq:HammingBowenContain}
B_H^{C_R,\mathcal{P}_{c_1\eta,\delta}}(x,\epsilon)\subset\pi\left(\bigcup_{\bar{x}\in\pi^{-1}x}\Bow(\bar{x},R,2\eta)\right).
\end{equation}
Recall that $\Gamma$ acts on the right, the abelian unipotent action acts on left and our metric is right invariant. Thus, for any $\bar{x}_1,\bar{x}_2\in\pi^{-1}x$ we have
\begin{equation}\label{eq:projectionLattice}
\pi(\Bow(\bar{x}_1,R,2\eta))=\pi(\Bow(\bar{x}_2,R,2\eta)).
\end{equation}
Then since the injectivity radius for universal cover is $\infty$, Proposition \ref{prop:BowenEstimate} gives
\begin{equation}\label{eq:HammingEstimateNoncompact}
\bar{\mu}(\Bow(\bar{x},R,2\eta))\leq CR^{-h},
\end{equation}
where $\bar{x}\in\pi^{-1}x$ and $h=\sum_{i=0}^{m}\dim(\mathfrak{g}_i)\cdot i$  and $\bar{\mu}$ is the right invariant Haar measure on $G$. Combining \eqref{eq:HammingBowenContain}, \eqref{eq:projectionLattice} and \eqref{eq:HammingEstimateNoncompact}, we have for $x\in L$:
\begin{equation}
\mu(B_H^{C_R,\mathcal{P}_{c_1\eta,\delta}}(x,\epsilon))\leq CR^{-h}.
\end{equation}

Notice that for any $\frac{\epsilon}{3}$-Hamming cover of $L$, we can replace the centers with points in $L$ by replacing the radius $\frac{\epsilon}{3}$ with $\epsilon$. Thus we know that one needs at least $C^{-1}R^h$ different $\epsilon$-Hamming balls with respect to $\mathcal{P}_{c_1\eta,\delta}$ to cover $L$. This implies that one needs at least $C^{-1}R^h$ different $\epsilon$-Hamming balls with respect to $\mathcal{P}_{c_1\eta,\delta}$ to cover $G/\Gamma$ as $L\subset G/\Gamma$. As a result, we obtain that even if $G/\Gamma$ is not necessarily compact,
\begin{equation}\label{eq:metricSlowLowerNonCompact}
h_{\mu}(\alpha)\geq h_{\mathfrak{u}},
\end{equation}
where $F_n=[-n,n]^k$ and $h_{\mathfrak{u}}$ is defined in Theorem \ref{thm:main}.

Recalling Corollary \ref{cor:topologicalSlowEntropyCompact}, we know that the upper bound of topological polynomial slow entropy is $h_{\mathfrak{u}}$ even when $G/\Gamma$ is not compact.

Combining the above estimates with Theorem \ref{thm:slowGoodwyn}, we conclude that both topological and metric polynomial slow entropy of $\alpha:\mathbb{R}^k\curvearrowright G/\Gamma$ are $h_{\mathfrak{u}}$, where $G/\Gamma$ is not necessarily compact.

\section{Abelian horocyclic subalgebras and Proof of Theorem \ref{thm:full-unstable}}\label{sec:horocyclicActions}

Let $G$ be a Lie group with Lie algebra $\mathfrak{g}$ and $X \in \mathfrak{g}$. Recall that $\ad_X : \mathfrak{g} \to \mathfrak{g}$ is an endomorphism of $\mathfrak{g}$. By the standard Jordan normal form of linear transformations, there is a splitting $\mathfrak{g} = \mf g_X^- \oplus \mf g_X^0 \oplus \mf g_X^+$, which is preserved by $\ad_X$ such that the eigenvalues of $\ad_X$ on $\mf g_X^-$, $\mf g_X^0$ and $\mf g_X^+$ have real part negative, zero and positive, respectively.

\begin{definition}
 A subalgebra $\mf u \subset \mf g$ is called {\it horocyclic} if there exists $X \in \mathfrak{g}$ such that $\mf u = \mf g_X^+$. $X$ is called a {\it renormalizing element} of $\mf u$.
\end{definition}

Let us recall a feature of semisimple Lie algebras before proving a structural lemma. A subalgebra $\mf a \subset \mf g$ is called an {\it $\R$-split Cartan subalgebra} if it is an abelian subalgebra such that for every $X \in \mf a$, $\ad_X$ is diagonalized over $\R$ and of maximal dimension. Since $\set{\ad_X : X \in \mf a}$ is a commuting family of linear maps, there exist finitely many functionals $\Delta \subset \mf a^*$ called the set of roots, and associated root spaces, $\mf g_\alpha \subset \mf g$. Then $\mf g$ splits as a vector space $\mf g = \mf a \oplus \mf m \oplus \bigoplus_{\alpha \in \Delta} \mf g_\alpha$ satisfying:
\begin{itemize}
\item $\mf m$ is a compact subalgebra commuting with $\mf a$;
\item if $Y \in \mf g_\alpha$ and $X \in \mf a$, then $[X,Y] = \alpha(X)Y$.
\end{itemize}





\begin{lemma}
\label{lem:conformal}
Let $\mf g$ be simple. If $\mf u$ is an abelian, horocyclic subalgebra of $\mf g$, then the renormalizing element $X$ can be chosen such that:

\begin{enumerate}[(1)]
\item $X \in \mf a$, where $\mf a\subset \mf g$ is some $\R$-split Cartan subalgebra;
\item $\mf u$ is normalized by $\ad_{\mf a}$;
\item There exists a set of simple roots of $\mf g$ with respect to $\mf a$, $\Delta_s = \set{\alpha_1,\dots,\alpha_n}$, such that $X \in \bigcap_{i=2}^n \ker \alpha_i$;
\item $\dim(\mf g_X^-) = \dim(\mf g_X^+)$;
\item $\ad_X(v) = v$ for all $v \in \mf g_X^+$;
\item $\ad_X(v) = -v$ for all $v \in \mf g_X^-$.
\end{enumerate}
\end{lemma}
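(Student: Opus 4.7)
The plan is to reduce items (1)--(6) to the classical structure theory of abelian nilradicals of parabolic subalgebras in simple Lie algebras, after first moving the given renormalizing element into a convenient $\mathbb R$-split Cartan.

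Starting with any $X_0$ satisfying $\mathfrak u = \mathfrak g_{X_0}^+$, I would first apply the Jordan decomposition $X_0 = X_s + X_n$ in the semisimple Lie algebra $\mathfrak g$: the generalized eigenspace decomposition of $\mathrm{ad}_{X_0}$ depends only on $X_s$, so $\mathfrak g_{X_0}^+ = \mathfrak g_{X_s}^+$. Further decomposing $X_s = X_h + X_e$ into its hyperbolic ($\mathbb R$-diagonalizable) and elliptic (purely imaginary spectrum) parts, only $X_h$ contributes to the real parts of the eigenvalues of $\mathrm{ad}$, so $\mathfrak u = \mathfrak g_{X_h}^+$. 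Since any hyperbolic element of $\mathfrak g$ lies in some $\mathbb R$-split Cartan $\mathfrak a$, an inner automorphism of $G$ applied jointly to $X_h$ and $\mathfrak u$ (which leaves the conclusion of the lemma unaffected) brings our element into $\mathfrak a$, yielding (1).

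With $X_h \in \mathfrak a$, the restricted root decomposition $\mathfrak g = \mathfrak m \oplus \mathfrak a \oplus \bigoplus_{\alpha \in \Delta} \mathfrak g_\alpha$ exhibits $\mathfrak u$ as a direct sum of root spaces, which is automatically $\mathrm{ad}_{\mathfrak a}$-invariant, giving (2). Next choose a positive system $\Delta^+$ compatible with $\mathfrak u$ (every $\alpha$ with $\mathfrak g_\alpha \subset \mathfrak u$ is positive) and let $\Delta_s = \{\alpha_1,\dots,\alpha_n\}$ be its simple roots. The crucial structural input is classical: in a simple $\mathfrak g$, an abelian nilradical of a parabolic subalgebra is necessarily the nilradical of a \emph{maximal} parabolic, associated with a distinguished simple root (which we label $\alpha_1$), and every positive root $\alpha = \sum c_i \alpha_i$ satisfies $c_1 \in \{0,1\}$ with $\mathfrak g_\alpha \subset \mathfrak u$ iff $c_1 = 1$. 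Abelianness of $\mathfrak u$ is precisely what prevents $c_1 \geq 2$, since two root spaces with $c_1 = 1$ would otherwise bracket into a third root space in $\mathfrak u$. Now define $X \in \mathfrak a$ by $\alpha_1(X) = 1$ and $\alpha_i(X) = 0$ for $i \geq 2$: then (3) is immediate; (5) follows because every $\mathfrak g_\alpha \subset \mathfrak u$ satisfies $\alpha(X) = c_1 = 1$; and (4) and (6) follow from the involution $\alpha \mapsto -\alpha$, which identifies positive roots with $c_1 = 1$ bijectively with negative roots having $c_1 = -1$.

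The main obstacle is invoking the ``abelian nilradical $\Rightarrow$ coefficient $0$ or $1$ on a single simple root'' structural result in the real, $\mathbb R$-split setting. Over $\mathbb C$ this is standard (it characterizes the so-called cominuscule simple roots, via Richardson--Kostant); for a real simple $\mathfrak g$ equipped with an $\mathbb R$-split Cartan it transports via the complexification, since abelianness of $\mathfrak u$ is preserved under passage to $\mathfrak g \otimes \mathbb C$. A minor bookkeeping point is ensuring the simple root $\alpha_1$ can indeed be chosen among those with $\alpha_i(X_h) > 0$; this is handled by placing $X_h$ in the closure of an appropriate Weyl chamber before fixing $\Delta^+$ and $\Delta_s$.
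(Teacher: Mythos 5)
Your roadmap matches the paper's: pass to the hyperbolic part of the renormalizing element, place it in an $\R$-split Cartan $\mathfrak a$, observe that $\mathfrak u$ is a sum of restricted root spaces, identify a single distinguished simple root, and rescale $X$. Items (1), (2), (4), (5), (6) are handled essentially as the paper does.

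The genuine divergence is in how the single-simple-root/coefficient-$1$ structure is established, and here I think your proposal has a gap. You outsource this to the classical characterization of abelian nilradicals of parabolics (cominuscule simple roots, Richardson--Kostant) over $\C$, and assert it ``transports via the complexification.'' That transport is not as routine as you make it sound, for three reasons: (i) the $\R$-split Cartan $\mathfrak a$ is in general only a maximal $\R$-diagonalizable abelian subalgebra, not a Cartan subalgebra of $\mathfrak g \otimes \C$, so simple roots of $\mathfrak g_\C$ do not correspond to restricted simple roots of $\mathfrak g$; (ii) the restricted root system of a real simple $\mathfrak g$ can be non-reduced of type $BC_n$ (e.g.\ $\mathfrak{su}(p,q)$ with $p\neq q$), to which the complex cominuscule statement ``$c_1\in\{0,1\}$'' does not directly apply; and (iii) $\mathfrak g_\C$ need not be simple (when $\mathfrak g$ is a complex simple Lie algebra regarded over $\R$). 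Each of these is fixable, but together they amount to real work that you have deferred, arguably as much work as proving the needed fact directly. The paper avoids all of this by working entirely inside the restricted root system: it decomposes the highest restricted root as a chain $\alpha_{i_1}+\cdots+\alpha_{i_\ell}$ with every partial sum a root (Helgason, Ch.~X, Lemma~3.10) and shows that if two simple roots were positive on $X$, or if $\alpha_1$ appeared twice in the chain, one would produce two root spaces in $\mathfrak u$ whose bracket lands in a nonzero root space, contradicting abelianness. Notably, you do sketch precisely this bracketing argument when you justify $c_1\leq 1$, so you already have the right idea in hand; the cleaner route is to run that argument for the entire ``maximal parabolic'' conclusion, exactly as the paper does, rather than invoking the complex classification and a complexification transfer that would itself need to be carried out with care.
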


\begin{proof}
First, notice that for any element $\bar{X}$ of a semisimple Lie algebra, if $\ad_{\bar{X}} = T + N + R$, where $T$ has real eigenvalues and is diagonalizable, $N$ is nilpotent (as a linear transformation) and $R$ has purely imaginary eigenvalues and is diagonalizable, then there exists $X,U,Z \in \mf g$ such that $\ad_X = T$, $\ad_U = N$ and $\ad_Z = R$. Then $X$ is $\R$-semisimple and $\mf g^+_X = \mf g^+_{\bar{X}}$, so we may without loss of generality assume that $X$ is $\R$-semisimple.

Any $\R$-semisimple element of $\mf g$ belongs to an $\R$-split Cartan subalgebra, so (1) is clear. Since an $\R$-split Cartan subalgebra consists of semisimple elements commuting with $X$, its adjoint action will have a joint diagonalization, 
 so (2) is also obvious. From (2), it is clear that $\Lie(U)$ is a sum of root spaces of $\mf a$. Perturb $X$ to a regular element $X'$. Each regular element $X'$ determines a set of simple and positive roots of $\mf g$. If $X'$ is sufficiently close to $X$, then any root $\alpha$ such that $\alpha(X) > 0$ also satisfies $\alpha(X') > 0$. We may also assume that if $\alpha(X') > 0$, then $\alpha(X) \ge 0$. Thus, $\Lie(U) \subset \mf g_{X'}^+$ , and $\mf g_{X'}^+$ is the sum of positive roots. Let $\set{\alpha_1,\dots,\alpha_n}$ be the simple roots for the system of positive roots induced by $X'$.

Notice that $\alpha_i(X) \ge 0$ for $i=1,\dots,n$. Suppose that there are two simple roots which are positive, without loss of generality, let them be $\alpha_1$, $\alpha_2$. By construction, if $\mf g^{\alpha}$ denotes the root space corresponding to $\alpha$, then $\mf g^{\alpha} \subset \Lie(U)$ if and only if $\alpha(X) > 0$. 
 By \cite[Ch. X, Lemma 3.10]{helgason} any positive root can be written as $\sum_{j=1}^\ell \alpha_{i_j}$ for some $\ell \in \N$, where each partial sum is also a root. There exists a highest root $\delta = \sum_{i=1}^n c_i \alpha_i$ with each $c_i \in \N$. In particular, for $\delta$, the sequence $\sum_{j=1}^\ell \alpha_{i_j}$ has every root appearing, including $\alpha_1$ and $\alpha_2$. Once $\alpha_1$ or $\alpha_2$ appears, the partial sums must belong to $\mf g^+_X$ since every simple root is nonnegative on $X$. They each must appear in the sequence, so there exists a root $\beta$ which is a nontrivial partial sum such that either $\beta +\alpha_1$ or $\beta + \alpha_2$ is also a root. Therefore, $\Lie(U)$ is not abelian, since $[\mf g^{\alpha_1},\mf g^\beta] = \mf g^{\alpha_1+\beta}$. Therefore, there exists a unique $\alpha \in \Delta_s$ such that $\alpha(X) > 0$, and it appears first in the sequence $(\alpha_{i_1},\alpha_{i_2},\dots,\alpha_{i_\ell})$, and uniquely in this position. This proves (3).

(4) follows from the fact that if $\alpha$ is a root, so is $-\alpha$, and $\Lie(U)$ is a sum of root spaces. Finally, observe that our arguments above show that a root space $\mf g^{\beta} \subset \Lie(U)$ if and only if $\beta = \alpha_1 + \sum_{i=2}^n c_i\alpha_i$ for $c_i \in \Z_{\ge 0}$. In particular, $\beta(X) = \alpha_1(X)$. Replacing $X$ with $X/\alpha_1(X)$ gives (5) and (6) immediately.
\end{proof}

\begin{lemma}
\label{lem:full-unstable}
If $\mf u = \mf g_X^+$ is an abelian, horocyclic subalgebra of a simple Lie algebra, then $\mf g_0 = \mf g_X^+$, $\mf g_1 = \mf g_X^0$ and $\mf g_2 = \mf g_X^-$, where $\mathfrak{g}_0,\mathfrak{g}_1,\mathfrak{g}_2$ are as in \eqref{eq:basg}.
\end{lemma}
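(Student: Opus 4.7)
My plan is to reduce the entire statement to one self-centralizing assertion, $Z_{\mathfrak{g}}(\mathfrak{u}) = \mathfrak{u}$, after which the identifications of $\tilde{\mathfrak{g}}_1$ and $\tilde{\mathfrak{g}}_2$ follow from pure weight bookkeeping. The workhorse is Lemma~\ref{lem:conformal}(5)--(6), which tells me that $\ad_X$ is diagonalizable with eigenvalues \emph{only} in $\{-1,0,+1\}$. So $\mathfrak{g} = \mathfrak{g}_X^- \oplus \mathfrak{g}_X^0 \oplus \mathfrak{g}_X^+$, and for any $U \in \mathfrak{u}$ the operator $\ad_U$ shifts the $X$-weight up by $1$. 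In particular $\ad_U(\mathfrak{g}_X^+) \subseteq \mathfrak{g}_X^{+2} = 0$ (recovering the abelianness of $\mathfrak{u}$), $\ad_U(\mathfrak{g}_X^0) \subseteq \mathfrak{u}$, and $\ad_U(\mathfrak{g}_X^-) \subseteq \mathfrak{g}_X^0$.

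The main step is $Z_{\mathfrak{g}}(\mathfrak{u}) = \mathfrak{u}$. The inclusion $\mathfrak{u} \subseteq Z_{\mathfrak{g}}(\mathfrak{u})$ is abelianness. For the other direction I would take $V = V^+ + V^0 + V^- \in Z_{\mathfrak{g}}(\mathfrak{u})$, note $V^+ \in \mathfrak{u}$ is automatic, and show $V^- = V^0 = 0$. Two ingredients drive this: (i) the Killing form $B$ is non-degenerate and satisfies $B(\mathfrak{g}_X^\lambda, \mathfrak{g}_X^\mu) = 0$ whenever $\lambda + \mu \neq 0$, hence restricts to a non-degenerate pairing $\mathfrak{g}_X^+ \times \mathfrak{g}_X^- \to \R$; (ii) simplicity gives $Z(\mathfrak{g}) = 0$. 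For $V^-$: invariance gives $B([V^-, Y], U) = -B(Y, [V^-, U]) = 0$ for every $U \in \mathfrak{u}$ and $Y \in \mathfrak{g}_X^0$, and since $[V^-, Y] \in \mathfrak{g}_X^-$ pairs non-degenerately with $\mathfrak{u} = \mathfrak{g}_X^+$, this forces $[V^-, \mathfrak{g}_X^0] = 0$; combined with $[V^-, \mathfrak{g}_X^-] \subseteq \mathfrak{g}_X^{-2} = 0$ and the hypothesis $[V^-, \mathfrak{u}] = 0$, I conclude $V^- \in Z(\mathfrak{g}) = 0$. For $V^0$ the same Killing-form manipulation yields $[V^0, \mathfrak{g}_X^-] = 0$; to upgrade this to centrality I observe that $\mathfrak{h} := \mathfrak{g}_X^+ + \mathfrak{g}_X^- + [\mathfrak{g}_X^+, \mathfrak{g}_X^-]$ is a nonzero ideal of $\mathfrak{g}$ (a short Jacobi check using the three weight ranges), hence $\mathfrak{h} = \mathfrak{g}$ by simplicity, and a Jacobi expansion of $[V^0, [B, C]]$ with $B \in \mathfrak{g}_X^+$, $C \in \mathfrak{g}_X^-$ then shows $V^0$ commutes with the $\mathfrak{g}_X^0$-piece as well, so $V^0 \in Z(\mathfrak{g}) = 0$.

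With $Z_{\mathfrak{g}}(\mathfrak{u}) = \mathfrak{u} = \mathfrak{g}_X^+$ in hand, the rest is weight counting. By definition $\tilde{\mathfrak{g}}_0 = Z_{\mathfrak{g}}(\mathfrak{u}) = \mathfrak{g}_X^+$, so I take $\mathfrak{g}_0 = \mathfrak{g}_X^+$. For $\tilde{\mathfrak{g}}_1$, the membership condition $\ad_U Y \in Z_{\mathfrak{g}}(\mathfrak{u}) = \mathfrak{u}$ for all $U \in \mathfrak{u}$ constrains only the $Y^-$ component, since $\ad_U$ sends $Y^+$ to $0$ and $Y^0$ into $\mathfrak{u}$ automatically; the remaining constraint $[U, Y^-] \in \mathfrak{u} \cap \mathfrak{g}_X^0 = 0$ forces $Y^- \in Z_{\mathfrak{g}}(\mathfrak{u}) \cap \mathfrak{g}_X^- = 0$. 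Thus $\tilde{\mathfrak{g}}_1 = \mathfrak{g}_X^+ \oplus \mathfrak{g}_X^0$ and the natural complement of $\tilde{\mathfrak{g}}_0$ in $\tilde{\mathfrak{g}}_1$ is $\mathfrak{g}_1 = \mathfrak{g}_X^0$. Since three applications of $\ad_U$ annihilate every $Y \in \mathfrak{g}$ by the weight shifts, $\tilde{\mathfrak{g}}_2 = \mathfrak{g}$ and $\mathfrak{g}_2 = \mathfrak{g}_X^-$.

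The one genuinely delicate step is the self-centralizing claim; all the rest is tracking $X$-weights. Simplicity (and not merely semisimplicity) is essential in my approach: any simple factor of $\mathfrak{g}$ that were not detected by $\mathfrak{u}$ would sit entirely inside $\mathfrak{g}_X^0$ and automatically commute with $\mathfrak{u}$, enlarging $Z_{\mathfrak{g}}(\mathfrak{u})$ strictly beyond $\mathfrak{u}$ and correspondingly shifting the $\mathfrak{g}_0 \oplus \mathfrak{g}_1 \oplus \mathfrak{g}_2$ decomposition.
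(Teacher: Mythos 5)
Your proof is correct, and it takes a genuinely different route from the paper's. Both proofs reduce to showing that $\mathfrak{u}$ is its own centralizer in $\mathfrak{g}$, and both rely on Lemma \ref{lem:conformal} (5)--(6) to pin the $\ad_X$-eigenvalues to $\{-1,0,1\}$ and on simplicity of $\mathfrak{g}$. But where you invoke the Killing form --- using ad-invariance to get $B(\mathfrak{g}_X^\lambda,\mathfrak{g}_X^\mu)=0$ unless $\lambda+\mu=0$, hence a non-degenerate pairing $\mathfrak{g}_X^+\times\mathfrak{g}_X^-\to\R$, which forces $[V^-,\mathfrak{g}_X^0]=0$ and $[V^0,\mathfrak{g}_X^-]=0$ for centralizing $V^\pm, V^0$ and then kills them via $Z(\mathfrak{g})=0$ --- the paper works entirely with the Lie bracket: it sets $\mathfrak{z}_0 = Z_{\mathfrak{g}}(\mathfrak{u})\cap\mathfrak{g}_X^0$, shows $\mathfrak{z}_0$ is an ideal of $\mathfrak{g}_X^0$, shows $\mathfrak{z} := \mathfrak{z}_0 \oplus [\mathfrak{g}_X^-,\mathfrak{z}_0]$ is an ideal of $\mathfrak{g}$ by a sequence of Jacobi manipulations, and concludes $\mathfrak{z}=0$ since $X\notin\mathfrak{z}$. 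The paper also cites the root-space fact that a root does not commute with its opposite to dispose of the $\mathfrak{g}_X^-$ part of the centralizer, whereas your Killing-form duality handles $V^-$ and $V^0$ uniformly. Your auxiliary ideal $\mathfrak{h}=\mathfrak{g}_X^++\mathfrak{g}_X^-+[\mathfrak{g}_X^+,\mathfrak{g}_X^-]$ and the conclusion $[\mathfrak{g}_X^+,\mathfrak{g}_X^-]=\mathfrak{g}_X^0$ are a nice way to upgrade $[V^0,\mathfrak{g}_X^\pm]=0$ to full centrality; the paper's terse final sentence (``a root and its opposite always generate a copy of $\mathfrak{sl}(2,\R)$'') appeals to essentially the same non-degeneracy. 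Your derivation of $\tilde{\mathfrak{g}}_1=\mathfrak{g}_X^+\oplus\mathfrak{g}_X^0$ and $\tilde{\mathfrak{g}}_2=\mathfrak{g}$ by weight bookkeeping is spelled out more carefully than in the paper and is correct. On balance your argument trades the paper's hands-on bracket computation for the structural non-degeneracy of an invariant form; it is arguably cleaner, while the paper's avoids invoking the Killing form explicitly.
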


\begin{proof}
To see that $\mf g_0 = \mf g_X^+$, we must show that the centralizer of $\mf g_X^+$ is itself. Since Lemma \ref{lem:conformal} (5) and (6) imply $[\mf g_X^-,\mf g_X^+] \subset \mf g_X^0$ and $[\mf g_X^0,\mf g_X^+] \subset \mf g_X^+$, the centralizer of $\mf u$ must also split as a direct sum. Notice that since any root does not commute with its opposite, the centralizer has trivial intersection on $\mf g_X^-$. Let $\mf z_0 = Z_{\mf g}(\mf u)\cap\mathfrak{g}_X^0$. We therefore wish to show that $\mf z_0 = \set{0}$.

We first claim that $\mf z_0$ is an ideal in $\mf g_0^X$. Indeed, if $W \in \mf g_X^0$, $Z \in \mf z_0$ and $v \in \mf u$, then:
\[ \ad_{[Z,W]}(v) = \ad_Z\ad_Wv - \ad_W\ad_Zv = \ad_ZV' - 0 = 0, \]
where $V' = \ad_Wv \in \mf u$. Therefore, $\mf z_0$ is an ideal in $\mf g_X^0$. $\mf z_0$ acts trivially on $\mf g_X^+$ by definition. 
We claim that $[[\mf g_X^-,\mf z_0],\mf u] \subset \mf z_0$. Indeed, if $V \in \mf g_X^-$, $Y \in \mf z_0$ and $U \in \mf u$, then $H = [V,U] \in \mf g_X^0$ and:
\[ [[V,Y],U] = [V,[Y,U]] - [Y,[V,U]] = [H,Y] \in \mf z_0, \]
since $\mf z_0$ is an ideal in $\mf g_X^0$.

Let $\mf z = \mf z_0 \oplus [\mf g_X^-,\mf z_0]$. We claim that $\mf z$ is an ideal in $\mf g$. Indeed, $\mf z_0$ acts trivially on $\mf u$ and we just showed that $[[\mf g_X^-,\mf z_0], \mf u] \subset \mf z_0$, so $[U,\mf z] \subset \mf z$ for every $U \in \mf u$. If $Y \in \mf g_X^0$, then $[Y,\mf z_0] \subset \mf z_0$ since $\mf z_0$ is an ideal, and since $Y$ preserves both $\mf g_X^-$ and $\mf z_0$, $[Y,[\mf g_X^-, \mf z_0]] \subset [\mf g_X^-, \mf z_0]$. Finally, if $V \in \mf g_X^-$, then by definition, $[V,\mf z_0] \subset [\mf g_X^-,\mf z_0]$ and $[V,[\mf g_X^-,\mf z_0]] \subset [V,\mf g_X^-] = \set{0}$. Therefore $\mf z$ is an ideal in $\mf g$, and notice that $X \not\in \mf z$. Since $\mf g$ is simple, $\mf z = \set{0}$. Since $\mf z = \set{0}$, we know that $\mf z_0 = \set{0}$. This shows that $\mf g_X^+$ is its own centralizer (i.e., that $\mf g_0 = \mf g_X^+$).


The other equalities immediately follow from the fact that $[\mf g_X^0,\mf u] \subset \mf u$ and that a root and its opposite always generate a copy of $\mf{sl}(2,\R)$ as a Lie algebra.
\end{proof}

Recall the following terminology defined before the statement of Theorem \ref{thm:full-unstable}: if $\mf g = \bigoplus_i \mf g^i$ is the decomposition of $\mf g$ as a sum of simple subalgebras with projections $\pi^i : \mf g \to \mf g^i$, we say $\mf g^i$ is detected by $\mf u$ if $\pi^i(\mf u) \not= \set{0}$.

\begin{corollary}\label{cor:extendToSemiSimple}
Let $\mf u = \mf g_X^+$ be an abelian, horocyclic subalgebra of a semisimple Lie algebra. With the exception of Lemma \ref{lem:conformal} (3) the conclusions of Lemmas \ref{lem:conformal} and \ref{lem:full-unstable} hold. Instead, one may choose $X$ such that for each simple factor $\mf g^i \subset \mf g$, if $\mf g^i$ is detected, $\pi^i(X)$ is in the kernel of all but one simple root of $\mf g^i$, and if $\mf g^i$ is not detected, then $\pi^i(X) = 0$.
\end{corollary}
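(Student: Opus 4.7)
The plan is to reduce to the simple case (Lemmas \ref{lem:conformal} and \ref{lem:full-unstable}) by exploiting the splitting $\mf g = \bigoplus_i \mf g^i$ into simple ideals. Since distinct simple ideals commute, any $X \in \mf g$ decomposes as $X = \sum_i X^i$ with $X^i = \pi^i(X)$; each $\mf g^i$ is $\ad_X$-invariant; and $\ad_X|_{\mf g^i} = \ad_{X^i}$. Hence the generalized-eigenspace decomposition of $\ad_X$ respects the product:
\[
\mf g_X^\pm = \bigoplus_i (\mf g^i)_{X^i}^\pm, \qquad \mf g_X^0 = \bigoplus_i (\mf g^i)_{X^i}^0,
\]
so $\mf u = \mf g_X^+$ splits as $\mf u = \bigoplus_i \mf u^i$ with $\mf u^i = (\mf g^i)_{X^i}^+$, each $\mf u^i$ being abelian horocyclic in $\mf g^i$ when $i \in D$ and zero otherwise.

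Next, I would replace $X$ by a more structured renormalizing element $Y = \sum_i Y^i$ built factor-by-factor: for each $i \in D$, apply Lemma \ref{lem:conformal} to $\mf u^i \subset \mf g^i$ to produce $Y^i$ satisfying (1)--(6) inside $\mf g^i$, and for $i \notin D$ set $Y^i = 0$. Because $(\mf g^i)^+_{Y^i} = \mf u^i$ in each factor, $Y$ is a valid renormalizing element for $\mf u$. Conclusions (1)--(2) follow by taking $\mf a = \bigoplus_i \mf a^i$, where each $\mf a^i$ is an $\R$-split Cartan of $\mf g^i$ containing $Y^i$; conclusions (4)--(6) about dimensions and eigenvalues assemble directly from the corresponding statements in each detected factor (undetected factors contribute zero to $\mf g_Y^\pm$). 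The replacement of (3) is exactly the factor-wise application of Lemma \ref{lem:conformal}(3), and this is the strongest possible statement, since the simple roots of $\mf g$ are the disjoint union of the simple roots of the $\mf g^i$: no single element can vanish on all but one simple root globally once more than one factor is detected.

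Lemma \ref{lem:full-unstable} extends analogously. Since $[\mf g^i, \mf g^j] = 0$ for $i \neq j$, the iterated brackets appearing in \eqref{eq:basg} decouple across factors, giving $Z_{\mf g}(\mf u) = \bigoplus_i Z_{\mf g^i}(\mf u^i)$ and more generally $\tilde{\mf g}_k = \bigoplus_i \tilde{\mf g}^i_k$. For each $i \in D$, Lemma \ref{lem:full-unstable} applied inside $\mf g^i$ identifies the chain as $(\mf g^i)^+_{Y^i}$, $(\mf g^i)^0_{Y^i}$, $(\mf g^i)^-_{Y^i}$; for $i \notin D$, the entire $\mf g^i$ is centralized by $\mf u$ and so contributes only to $\mf g_0$. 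The main thing to check is that every algebraic operation involved---centralizer, generalized eigenspace, $\R$-split Cartan, simple root system---respects the decomposition into simple ideals, which is immediate from the vanishing of cross-factor brackets. Everything else is a routine factor-wise assembly.
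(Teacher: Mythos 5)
Your proof is correct and follows essentially the same approach as the paper: split $\mf g$ into simple ideals, observe that $\mf u = \mf g_X^+$ and the generalized eigenspace decomposition of $\ad_X$ respect this splitting, so $\mf u$ decomposes as a direct sum of abelian horocyclic subalgebras in each detected factor, and then apply Lemma \ref{lem:conformal} and Lemma \ref{lem:full-unstable} factor by factor, choosing the renormalizing element as the sum of the renormalizing elements from each detected factor. Your write-up is more detailed than the paper's terse proof (and correctly notes that undetected factors sit in $\mf g_0$ without appearing in $\mf g_X^+$, which is implicitly a factor-wise reading of the corollary statement), but there is no genuine difference in method.
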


\begin{proof}
This follows immediately from the fact that semisimple Lie algebras are direct sums of simple Lie algebras. The projection of $\mf u$ to any simple factor must be of the forms described. Since the adjoint action of one factor is trivial on the others, the normalizing element must project nontrivially to each detected simple factor, i.e. the projection of $\mathfrak{u}$ to this simple factor is nontrivial. This implies that $\mf u$ also splits as a direct sum of abelian, horocyclic subgroups in each factor, and the renormalizing element $X$ can be chosen as the sum of the renormalizing element in each detected factor.
\end{proof}

\subsection{Proof of Theorem \ref{thm:full-unstable}}\label{sec:ProofOfFullUnstable}
\begin{proof}[Proof of Theorem \ref{thm:full-unstable}]

We use Theorem \ref{thm:main}. Let $\mathfrak{g}=\oplus_{i}\mathfrak{g}^i$ be the decomposition of semisimple Lie algebra $\mathfrak{g}$ into direct sum of simple Lie algebras $\mathfrak{g}^i$. Notice that the simple algebras which $\mf u$ does not detect do not contribute any growth. That is, $\mf g^i \subset \mf g_0$. However, if $\mathfrak{g}^i$ is detected, then we have $\dim(\mf g^i) = \dim((\mf g^i)_X^-) + \dim((\mf g^i)_X^0) + \dim((\mf g^i)_X^+)$. So by Lemma \ref{lem:conformal} (4), Lemma \ref{lem:full-unstable} and Corollary \ref{cor:extendToSemiSimple}, the polynomial slow entropy is given by:
\begin{equation}
\begin{aligned}
\sum_{i\in D}\dfrac{0 \cdot \dim (\mf g^i_0) + 1 \cdot \dim(\mf g^i_1) + 2 \cdot \dim(\mf g^i_2)}{\dim \mf u} &= \sum_{i\in D}\dfrac{\dim ((\mf g^i)_X^+) + \dim((\mf g^i)_X^0) + \dim((\mf g^i)_X^-)}{\dim \mf u} \\
&= \sum_{i\in D}\dfrac{\dim \mf g^i}{\dim \mf u},
\end{aligned}
\end{equation}
where $D$ denotes the set of indices $i$ for which $\mathfrak{g}^i$ is detected.
\end{proof}

\section{Coherence with rank one situation and computation of some examples}\label{sec:coherenceExamples}
In this Section we present several applications of our Theorems \ref{thm:main} and \ref{thm:full-unstable}. We start by showing in Section \ref{sec:coherence} that our results conform with the slow entropy computations for unipotent flows in \cite{KanigowskiVinhageWei}. Then we examine several higher rank examples including horocyclic subalgebras of $\mathfrak{sl}(d,\mathbb{R})$ in Section \ref{sec:horocyclic-ex} and restrictions of first row horocyclic subalgebras in Section \ref{sec:restrictionHorocyclic}. We also deal with some examples of abelian unipotent actions on $\mathfrak{sl}(d,\mathbb{R})$ that are not restrictions of first row horocyclic subalgebras (Section \ref{sec:nonrestrictionOfFirstRow}) and abelian unipotent actions on nilpotent homogeneous spaces (Section \ref{sec:abelianNilpotentHomogeneous}).

\subsection{Coherence with rank one situation}\label{sec:coherence}

Suppose $G$ is a connected Lie group with Lie algebra $\mathfrak{g}$ and $U\in\mathfrak{g}$ with $\ad_U^N=0$ for some $N\in\mathbb{Z}^+$. Let $\Gamma\subset G$ be a lattice and denote the unipotent flow induced by $U$ as $\phi_t(g\Gamma)=\exp(Ut)g\Gamma$ for $g\in G$. Since by definition, $\ad_U$ is nilpotent, we may find coordinates of $\mf g$ in which

\begin{equation}
\label{eq:JNF}
\ad_U = \left(
                                  \begin{array}{ccc}
                                    J_{m_1} &  &  \\
                                     & \ddots &  \\
                                     &  & J_{m_n} \\
                                  \end{array}
                                \right),
\end{equation}
 where $J_{m_i}=\left(
                                           \begin{array}{cccc}
                                             0 & 1 &  &  \\
                                              & \ddots & \ddots &  \\
                                              &  & \ddots & 1 \\
                                              &  &  & 0 \\
                                           \end{array}
                                         \right)$ is a $m_i\times m_i$ matrix for $i=1,\ldots,n$. 
Theorem 1.10 and Theorem 1.11 in \cite{KanigowskiVinhageWei} imply that both topological and metric polynomial slow entropy of $\phi_t$ are equal to
\begin{equation}\label{eq:oldFormula}
\sum_{i=1}^n\binom{m_i}{2}=\sum_{i=1}^n\sum_{j=0}^{m_i-1}j.
\end{equation}

Next we use Theorem \ref{thm:main} to calculate both topological and metric polynomial slow entropy of the unipotent flow $\phi_t$. 
By \eqref{eq:JNF}, there exists a  basis $\{X_j^i\}$ of $\mathfrak{g}$ with $0\leq j\leq m_i-1$ and $1\leq i\leq n$ such that
$$\ad_U(X_j^i)=X_{j-1}^i$$
for every $j\in\{1,\ldots,m_i-1\}$ and $i\in\{1,\ldots,n\}$.

We now identify some choice of $\mf g_j$ as defined in \eqref{eq:basg}. 
We claim that we may choose
\begin{equation}\label{eq:rankonegi}
\mathfrak{g}_j=\{X_{j}^i: \text{any $1\leq i\leq n$ such that }j\leq m_i-1\},
\end{equation}

Indeed, one may check directly that $\bigoplus_{\ell=0}^j \mf g_\ell$ is exactly $\tilde{\mf g}_j$.
Then Theorem \ref{thm:main} gives us that both topological and metric polynomial slow entropy are equal to
\begin{equation}\label{eq:newFormula}
\sum_{j=0}^{m}\dim{\mathfrak{g}_j}\cdot j,
\end{equation}
where $m=\max_{i}\{m_i-1\}$.

\vspace{1cm}

\hspace{-.3in}\begin{tabular}{cc}
\begin{minipage}{4.5in}One may easily compare \eqref{eq:newFormula} and \eqref{eq:oldFormula} to see that they are equal: each can be seen to be the sum of $j$ as $X^i_j$ ranges over all basis elements. In the figure to the right, we illustrate the idea for a basis which contains one block of length 5, one of length 4, one of length 2 and one of length 1. Each dot represents a basis element, each block in \eqref{eq:JNF} is represented vertically, with $\ad_U$ taking the basis element to the dot directly above it. One may think of formulas \eqref{eq:newFormula}  and \eqref{eq:oldFormula} as integrating the function $\binom{f(x)}{2}$ where $x$ ranges over the set of blocks and $f(x)$ is their size. Then their different order of summation resembles the difference between Riemann and Lebesgue integration.
\end{minipage}&
$\begin{array}{c|cccc}
4 & \bullet \\
3 & \bullet & \bullet \\
2 & \bullet & \bullet \\
1 & \bullet & \bullet & \bullet & \\
0 & \bullet & \bullet & \bullet & \bullet \\
\hline
 & 1 & 2 & 3 & 4
\end{array}$
\end{tabular}



\subsection{Horocyclic subalgebras of $\mf{sl}(d,\R)$}
\label{sec:horocyclic-ex}

Let us apply the results of Section \ref{sec:horocyclicActions} to a class of examples on $\mf{sl}(d,\R)$. Notice that since all $\R$-split Cartan subalgebras of $\mf{sl}(d,\R)$ are conjugate, as are any systems of positive roots. Then by Lemma \ref{lem:conformal}, every horocyclic subalgebra of $\mf{sl}(d,\R)$ has an associate simple root. The standard $\R$-split Cartan subalgebra of $\mf{sl}(d,\R)$ are the diagonal matrices $\mf a = \set{\diag(t_t,\dots,t_d) : \sum t_i = 0}$, and the standard choice of simple roots are $\alpha_i(t) = t_i - t_{i+1}$ in these coordinates.

Each $\alpha_i$ has a corresponding horocyclic subalgebra: one chooses $X_i \in \bigcap_{j \not=i} \ker \alpha_j$, and lets $\mf u = \mf g^+_{X_i}$. One may check directly that $X_i = (d-i,\dots,d-i,-i,\dots,-i)$, where $d-i$ is repeated $i$ times, and $-i$ is repeated $d-i$ times. Then $\mf g^+_{X_i}$ is is the set of block diagonal matrices $\begin{pmatrix} \mbf{0}_{i,i} & A \\ \mbf{0}_{d-i,i} & \mbf{0}_{d-i,d-i} \end{pmatrix}$, where $A$ is any $i \times (d-i)$ matrix, and $\mbf{0}_{m,n}$ is the $m \times n$ matrix with all zero entries. Thus, when $i = 1$, the algebra $\mf u$ is the first row of $\mf{sl}(d,\R)$. This is the simplest of the actions, and we refer to it as a {\it first row action}.

It is also straightforward to check that $\mf g^-_{X_i}$ is the corresponding block-lower triangular matrices. By Theorem \ref{thm:full-unstable}, the slow entropy of each of these actions is given by \[ \frac{\dim(\mf{sl}(d,\R))}{\dim \mf u} = \frac{d^2-1}{i(d-i)}.\]

\subsection{Restrictions of first row horocyclic subalgebras}\label{sec:restrictionHorocyclic}

There is one simple technique one may use to produce examples of abelian unipotent actions that are not horocyclic, starting from a horocyclic one: take a restriction of the action. Recall the first row action on $\mf{sl}(d,\R)$ from Section \ref{sec:horocyclic-ex}. Denote $E_{i,j}$ as $d\times d$ matrix with $1$ at position $(i,j)$ and all others are zeros.  We consider the action of the following algebra, which is a restriction of this action: $\mathfrak{u}_\ell = \operatorname{span}_\R\set{E_{1,j} : 2 \le j \le \ell+1}\subset\mathfrak{sl}(d,\mathbb{R})$, where $1\leq\ell\leq d-1$. Notice that when $\ell+1 = d$, this is exactly the first row action.

We remark that the first row action is normalized by $\mf{sl}(d-1,\R)$ sitting in the lower right hand block of $\mf{sl}(d,\R)$, and that the action of $\mf{sl}(d-1,\R)$ is transitive on every Grassmanian of the first row. Thus, these examples represent all examples of first row restrictions up to conjugacy.

By direct computation, we obtain that
\begin{equation}
\begin{aligned}
\mathfrak{g}_0=&\operatorname{span}_\R\left\{\{E_{1,j}:2\leq j\leq d\}\cup\{E_{i,j}:\ell+2\leq i\leq d, 2\leq j\leq d\}\right\},\\
\mathfrak{g}_1=&\operatorname{span}_\R\{\{E_{i,j}:2\leq i\neq j\leq \ell+1\}\cup\{E_{1,1}- E_{j,j}:2\leq j\leq \ell+1\}\\&\cup\{E_{i,1}:\ell+2\leq i\leq d\}
\cup\{E_{i,j}:2\leq i\leq\ell+1,\ell+2\leq j\leq d\}\},\\
\mathfrak{g}_2=&\operatorname{span}_\R\left\{\{E_{i,1}:2\leq i\leq\ell+1\}\right\},\\
\mathfrak{g}_3=&\set{0}.
\end{aligned}
\end{equation}

Moreover, we also obtain the dimension of $\mathfrak{g}_0$, $\mathfrak{g}_1$ and $\mathfrak{g}_2$ as the following:
\begin{equation}
\begin{aligned}
\dim\mathfrak{g}_0&=d-1+(d-\ell-1)(d-1)=(d-\ell)(d-1),\\
\dim\mathfrak{g}_1&=(\ell^2-\ell)+\ell+(d-\ell-1)+\ell(d-\ell-1)=(\ell+1)d-2\ell-1,\\
\dim\mathfrak{g}_2&=\ell.
\end{aligned}
\end{equation}
Finally by applying Theorem \ref{thm:main}, we obtain that both topological and metric polynomial slow entropy of this abelian unipotent action is
$$\frac{1}{\dim\mathfrak{u}}(\dim\mathfrak{g}_0\cdot0+\dim\mathfrak{g}_1\cdot1+\dim\mathfrak{g}_2\cdot2)=\frac{(\ell+1)d-1}{\ell}.$$

It is also worth to point out that based on Theorem \ref{thm:full-unstable}, the abelian unipotent action under our consideration is a horocyclic action if and only if $\ell=d-1$.
\subsection{Non-restrictions of first row horocyclic subalgebras}\label{sec:nonrestrictionOfFirstRow}
In this section we consider some abelian unipotent actions on $\mathfrak{sl}(d,\mathbb{R})$ different from restrictions of the first row action. These type of actions may have nontrivial $\mathfrak{g}_i$ for $i\geq 3$. The example under our consideration in this section is based on an abelian $\ad$-unipotent subalgebra of $\mathfrak{sl}(d,\mathbb{R})$ for $d\geq3$ (as they are horocycle flows if $d=2$ and identity if $d=1$.). More precisely, the subalgebra $\mathfrak{u}$ is defined as:
$$\mathfrak{u}=\operatorname{span}_{\mathbb{R}}\{A_1,\ldots,A_{d-1}\},$$
where $A_k=(a_{i,j})_{d\times d}\in\mathfrak{sl}(d,\mathbb{R})$ satisfies $a_{1,1+k}=a_{2,2+k}=\ldots=a_{d-k,d}=1$ and all other $a_{i,j}=0$ for $1\leq k\leq d-1$. Notice that $A_k=A_{1}^k$ and thus $\mathfrak{u}$ is an abelian $\ad$-unipotent subalgebra of $\mathfrak{sl}(d,\mathbb{R})$.

Recalling Corollary 6.2 (i)\footnote{See page 22, line 7 in \cite{KanigowskiVinhageWei} for more details.} in \cite{KanigowskiVinhageWei}, we know that the chain basis of $\mathfrak{sl}(d,\mathbb{R})$ with respect to $\ad_{A_1}$ consists of $d-1$ single chains with length $3,5,\ldots,2d-1$. Denote this chain basis as $\{X_j^i\}$ for $1\leq i\leq d-1$ and $0\leq j\leq 2d-2i$. By definition of chain basis, we have:
\begin{equation}\label{eq:chainbasis}
\ad_{A_1}X_j^i=X_{j-1}^i,
\end{equation}
for $1\leq i\leq d-1$ and $1\leq j\leq 2d-2i$. Moreover, for $0\leq j\leq 2d-2$, we define: $$H_j=\operatorname{span}_{\mathbb{R}}\{X_{j}^i: \text{any $1\leq i\leq d-1$ such that }j\leq 2d-2i\}.$$

\begin{lemma}\label{lem:hjgjequal}
For $0\leq j\leq 2d-2$, we have $H_j=\mathfrak{g}_j$ and $\mathfrak{sl}(d,\mathbb{R})=\bigoplus_{j=0}^{2d-2}\mathfrak{g}_j$.
\end{lemma}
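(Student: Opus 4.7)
The plan is to show that the filtration $\tilde{\mathfrak{g}}_j$ associated to $\mathfrak{u}$ coincides with the single-operator filtration $F_j := \ker(\ad_{A_1}^{j+1}) \subset \mathfrak{sl}(d,\mathbb{R})$. Once this is established, both claims follow at once from the chain basis: the relation $\ad_{A_1}^{j+1}(X^i_\ell) = X^i_{\ell-j-1}$ (understood as $0$ once the lower index drops below $0$) shows that $F_j = \bigoplus_{\ell=0}^j H_\ell$, so $H_j$ is a valid complement of $\tilde{\mathfrak{g}}_{j-1}$ inside $\tilde{\mathfrak{g}}_j$ and therefore a legitimate choice of $\mathfrak{g}_j$. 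The decomposition $\mathfrak{sl}(d,\mathbb{R}) = \bigoplus_{j=0}^{2d-2} H_j$ then follows from the fact that $\{X^i_j\}$ is a basis.

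The inclusion $\tilde{\mathfrak{g}}_j \subset F_j$ is immediate by specializing to $U_1 = \cdots = U_{j+1} = A_1$. For the reverse inclusion $F_j \subset \tilde{\mathfrak{g}}_j$ the decisive observation is the following factorization, valid in the associative algebra $\operatorname{End}(\mathfrak{sl}(d,\mathbb{R}))$ because the left- and right-multiplication operators $L_{A_1}$ and $R_{A_1}$ commute:
\begin{equation*}
\ad_{A_k} \;=\; \ad_{A_1^k} \;=\; L_{A_1}^k - R_{A_1}^k \;=\; (L_{A_1}-R_{A_1})\,Q_k \;=\; \ad_{A_1}\cdot Q_k, \qquad Q_k := \sum_{i=0}^{k-1} L_{A_1}^{k-1-i}\,R_{A_1}^{\,i}.
\end{equation*}
Both $\ad_{A_1}$ and each $Q_k$ are polynomials in the commuting pair $L_{A_1}, R_{A_1}$, hence mutually commute. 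Since $\mathfrak{u}$ is abelian the $\ad_{A_{k_i}}$'s commute too, and by multilinearity it is enough to treat pure products $\ad_{A_{k_1}}\cdots\ad_{A_{k_{j+1}}}$. Factoring every term gives
\[ \ad_{A_{k_1}}\cdots\ad_{A_{k_{j+1}}} \;=\; \ad_{A_1}^{j+1}\cdot Q_{k_1}\cdots Q_{k_{j+1}}, \]
and commuting $\ad_{A_1}^{j+1}$ past the polynomial factor shows that for any $X \in F_j$, the result equals $Q_{k_1}\cdots Q_{k_{j+1}}\bigl(\ad_{A_1}^{j+1}X\bigr) = 0$. Hence $F_j \subset \tilde{\mathfrak{g}}_j$.

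The main obstacle is spotting the right factorization. A priori $\ad_{A_1^k}$ is \emph{not} a polynomial in $\ad_{A_1}$ (the introduction's footnote warns of exactly this phenomenon for commuting nilpotents), so one cannot na\"ively reduce everything to powers of $\ad_{A_1}$. Passing to the associative envelope through $L_{A_1}$ and $R_{A_1}$ dissolves this obstruction: there $A_1^k$ genuinely is a product, and the classical identity $L^k - R^k = (L-R)(L^{k-1} + L^{k-2}R + \cdots + R^{k-1})$ supplies the critical $\ad_{A_1}$ factor on the left. Once the equality $\tilde{\mathfrak{g}}_j = F_j$ is in hand, the remaining checks---linear independence of $H_j$ from $\tilde{\mathfrak{g}}_{j-1}$ (since $\ad_{A_1}^{j}$ sends a nonzero element of $H_j$ to a nonzero combination of the $X_0^i$'s) and matching of dimensions---are immediate from the chain basis.
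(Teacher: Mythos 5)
Your proof is correct, but it takes a genuinely different and arguably cleaner route than the paper's. The paper argues by induction on $j$: the base case $H_0 = \mathfrak{g}_0$ uses the elementary fact that $A_1V=VA_1$ implies $A_1^kV=VA_1^k$, and the inductive step chains two observations---that $\ad_{A_1}^{i_0+1}$ commutes with $\ad_{A_1^k}$, and that $\ad_{A_1^k}$ annihilates $\mathfrak{g}_0$---to push $\ad_{A_1^k}(H_{i_0+1})$ down into $\tilde{\mathfrak{g}}_{i_0}$, then invokes the previous induction hypothesis to kill it with $i_0+1$ more operators. You instead prove the single structural identity $\tilde{\mathfrak{g}}_j = \ker(\ad_{A_1}^{j+1})$ for all $j$ at once, by lifting to the associative envelope $\End(\mathfrak{g})$ and factoring $\ad_{A_1^k} = \ad_{A_1}\cdot Q_k$ with $Q_k$ a polynomial in the commuting pair $L_{A_1},R_{A_1}$. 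This dissolves the need for induction entirely: once every $\ad_{A_{k}}$ carries a manifest $\ad_{A_1}$ factor commuting with everything else, the inclusion $\ker(\ad_{A_1}^{j+1}) \subset \tilde{\mathfrak{g}}_j$ drops out in one line. Your argument has the added conceptual benefit of isolating exactly \emph{why} this example's filtration collapses to the single-generator filtration of $\ad_{A_1}$, which the paper's induction obscures. Both proofs are sound; yours would also port painlessly to any abelian subalgebra of the form $\operatorname{span}\{N, N^2, \dots, N^{d-1}\}$ for a nilpotent $N$ in a matrix Lie algebra.
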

\begin{proof}
If $V\in H_0$, this implies $[A_1,V]=0$, i.e. $A_1V=VA_1$ and thus we also have $A_1^kV=A_1^{k-1}VA_1=\ldots=VA_1^k$, which implies that $[A_1^k,V]=0$ for $1\leq k\leq d-1$. As a result, we obtain that $H_0\subset\mathfrak{g}_0$. Moreover, if there exists $W\in\mathfrak{g}_0\setminus H_0$, \eqref{eq:chainbasis} implies that $\ad_{A_1}W\neq0$, which contradicts the definition of $\mathfrak{g}_0$. Thus we obtain that $H_0=\mathfrak{g}_0$.

Now suppose $\mathfrak{g}_i=H_i$ for all $0\leq i\leq i_0$. If $V\in H_{i_0+1}$, then for $1\leq k\leq d-1$, since $A_1$ commutes with $A_1^{k}$, we have: $$\ad_{A_1}^{i_0+1}\ad_{A_1^{k}}(V)=\ad_{A_1^{k}}\ad_{A_1}^{i_0+1}(V)\subset\ad_{A_1^{k}}(H_0)=\ad_{A_1^{k}}(\mathfrak{g}_0)=\{0\},$$
which implies that $\ad_{A_1^{k}}(V)\in \bigoplus_{i=0}^{i_0}H_{i}=\bigoplus_{i=0}^{i_0}\mathfrak{g}_{i}$ due to equation \eqref{eq:chainbasis} for the chain basis and induction assumption. Thus we obtain that for $1\leq k_1,\ldots,k_{i_0+2}\leq d-1$,
$$\ad_{A_1^{k_1}}\ldots\ad_{A_1^{k_{i_0+2}}}(H_{i_0+1})\subset\ad_{A_1^{k_1}}\ldots\ad_{A_1^{k_{i_0+1}}}(\bigoplus_{i=0}^{i_0}\mathfrak{g}_{i})=\{0\},$$
which gives us that $H_{i_0+1}\subset\mathfrak{g}_{i_0+1}$ by combining with \eqref{eq:chainbasis}. Moreover, if there is  $W\in\mathfrak{g}_{i_0+1}\setminus H_{i_0+1}$, then \eqref{eq:chainbasis}, $\mathfrak{g}_i=H_i$ for $1\leq i\leq i_0$ and $\mathfrak{g}_{i}\cap\mathfrak{g}_j=\{0\}$ for $i\neq j$ imply that $\ad_{A_1}^{i_0+2}W\neq0$, which contradicts the definition of $\mathfrak{g}_{i_0+1}$. Thus we obtain that $H_{i_0+1}=\mathfrak{g}_{i_0+1}$.

As a result of this induction process, we obtain that for $0\leq j\leq 2d-2$:
\begin{equation}\label{eq:gjhjequal}
\mathfrak{g}_{j}=H_j=\operatorname{span}_{\mathbb{R}}\{X_{j}^i: \text{any $1\leq i\leq d-1$ such that }j\leq 2d-2i\}.
\end{equation}
Combining \eqref{eq:gjhjequal} with $\mathfrak{sl}(d,\mathbb{R})=\bigoplus_{j=0}^{2d-2}H_j$, we finish the proof of the lemma.
\end{proof}

With the help of Lemma \ref{lem:hjgjequal} and by direct computation, we obtain that the polynomial slow entropies of $\exp(\mathfrak{u})$ are
\begin{equation}
\frac{1}{d-1}\sum_{i=0}^{2d-2}i\cdot\dim\mathfrak{g}_i=\frac{1}{d-1}\sum_{i=1}^{d-1}(d-i)(4i-1)=\frac{d(4d+1)}{6}
\end{equation}
since $\dim (\mathfrak{g}_{2i})=\dim(\mathfrak{g}_{2i-1})=d-i$ for $1\leq i\leq d-1$.

Comparing with Theorem \ref{thm:full-unstable} this also shows that $\mathfrak{u}$ is not a horocyclic subalgebra of $\mathfrak{sl}(d,\mathbb{R})$ for $d\geq 3$.

\subsection{Abelian unipotent actions on nilpotent homogeneous spaces}\label{sec:abelianNilpotentHomogeneous}
Let $\mathfrak{g}$ be the subalgebra of $\mathfrak{sl}(d,\mathbb{R})$ consisting of \emph{strictly upper triangular} elements and for $1\leq \ell\leq d-1$ let $\mathfrak{u}_{\ell}$ be its first row's $\ell$ dimensional abelian  $\ad$-unipotent subalgebra of $\mathfrak{g}$, i.e.
$$\mathfrak{u}_{\ell}=\left\{\left(
                \begin{array}{cc}
                  0_{1,1} & A(t_1,\ldots,t_{\ell}) \\
                  0_{d-1,1} & 0_{d-1,d-1}\\
                \end{array}
              \right):t_1,\ldots,t_{\ell}\in\mathbb{R}\right\},$$
where $A(t_1,\ldots,t_{\ell})=(t_1,\ldots,t_{\ell},0,\ldots,0)\in\mathbb{R}^{d-1}$.

Then by direct computation, we have
\begin{equation}
\begin{aligned}
\mathfrak{g}_0&=\operatorname{span}_{\mathbb{R}}\{E_{1,i}:2\leq i\leq d\}\cup\{E_{i,j}:\ell+2\leq i<j\leq d\},\\
\mathfrak{g}_1&=\operatorname{span}_{\mathbb{R}}\{E_{i,j}:2\leq i<j\leq\ell+1\}\cup\{E_{i,j}:2\leq i\leq \ell+1,\ell+2\leq j\leq d\},\\
\mathfrak{g}_3&=\set{0}.
\end{aligned}
\end{equation}
And thus we have
\begin{equation}\label{eq:nilpotentLiealgebra}
\begin{aligned}
\dim\mathfrak{g}_0&=d-1+\frac{(d-\ell-2)(d-\ell-1)}{2},\\
\dim\mathfrak{g}_1&=\frac{\ell(\ell-1)}{2}+\ell(d-\ell-1).\\
\end{aligned}
\end{equation}

Applying Theorem \ref{thm:main} with $\mathfrak{u}_{\ell}$ and \eqref{eq:nilpotentLiealgebra}, we obtain that the slow entropies of these actions are equal to
$$\frac{(\ell-1)}{2}+(d-\ell-1)=\frac{2d-\ell-3}{2},$$
for $1\leq\ell\leq d-1$.


\appendix
\section{Slow entropy of product actions}\label{sec:ProductRankOne}
Recall that a general method to construct a higher rank abelian action on homogeneous space $G/\Gamma$ is using a family of commuting generators on $G/\Gamma$. However, when $G/\Gamma$ is a product space of homogeneous spaces $G_i/\Gamma_i$ for $1\leq i\leq n$, we can construct an abelian unipotent action on $G/\Gamma$ by considering the unipotent flow on $G_i/\Gamma_i$ as a generator for the action. Indeed, there are some simple but nice relations among the polynomial slow entropies of general abelian actions and the polynomial slow entropies of their generators.

Suppose that for $1\leq i\leq \ell$ we have an $\R^{k_i}$ action $\varphi_i$ on $(X_i,\mu_i,d_i)$ with metric (topological) slow entropy $h_i$. Then let $N = \sum_{i=1}^{\ell} k_i$ and $\alpha$ be the product action of $\R^N$ on $(\prod_i X_i,\prod_i \mu_i,\prod_i d_i)$ given by
\[ \alpha : (t_1,\dots,t_{\ell}) \cdot (x_1,\dots,x_{\ell}) = ((\varphi_1)_{t_1}(x_1),\dots,(\varphi_{\ell})_{t_{\ell}}(x_{\ell})) ,\]
where $t_i \in \R^{k_i}$ and $x_i \in X_i$. Then we have the following weighted average formula for the slow entropies at polynomial scale:
	\begin{proposition}\label{prop:weightAverage}
		The volume-normalized metric (topological) polynomial slow entropy (with respect to any norm-induced F{\o}lner sequence) of $\alpha$ is:
		\[ \frac{1}{N}\sum_{i=1}^{\ell}k_ih_i \]
		where $h_i$ is the metric (topological) polynomial slow entropy of $\varphi_i$.
	\end{proposition}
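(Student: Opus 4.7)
I will treat the topological and metric cases separately, exploiting in both the fact that under the max norm on $\R^N$ the F{\o}lner cube factors as $F_n = F_n^{(1)} \times \cdots \times F_n^{(\ell)}$ with $F_n^{(i)} \subset \R^{k_i}$ a cube of side $2n$, so that $\log\Leb(F_n) = \sum_i \log\Leb(F_n^{(i)})$ and $\log\Leb(F_n^{(i)})/\log\Leb(F_n) = k_i/N$.

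For the topological case I would equip $\prod_i X_i$ with the max product metric $d((x_i),(y_i)) = \max_i d_i(x_i,y_i)$, under which Bowen balls factor exactly: $\operatorname{Bow}^{\alpha,F_n}(x,\epsilon) = \prod_i \operatorname{Bow}^{\varphi_i,F_n^{(i)}}(x_i,\epsilon)$. Taking products of optimal covers and of optimal separated sets in each factor gives
\[
N_B^{\alpha}\bigl(F_n,\epsilon,{\textstyle\prod_i} K_i\bigr) \le \prod_i N_B^{\varphi_i}(F_n^{(i)},\epsilon,K_i), \qquad S_B^{\alpha}\bigl(F_n,\epsilon,{\textstyle\prod_i} K_i\bigr) \ge \prod_i S_B^{\varphi_i}(F_n^{(i)},\epsilon,K_i),
\]
which are pinned together by $S_B(\epsilon) \ge N_B(\epsilon) \ge S_B(2\epsilon)$. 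Taking $\log$, dividing by $\log\Leb(F_n)$, and splitting the denominator shows each $\limsup$ decomposes as $\sum_i (k_i/N)\cdot h_i$; since every compact set in $\prod_i X_i$ is contained in a product of compacts, exhausting by products gives the topological formula.

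For the metric case the analogous factorization comes from the identity
\[
1 - d_H^{F_n,\mathcal{P}}(x,y) \;=\; \prod_{i=1}^\ell \bigl(1 - d_H^{F_n^{(i)},\mathcal{P}_i}(x_i,y_i)\bigr)
\]
for product partitions $\mathcal{P} = \mathcal{P}_1 \times \cdots \times \mathcal{P}_\ell$, which follows from Fubini since the $i$-th component of the disagreement set in $F_n$ depends only on $t_i \in F_n^{(i)}$. This yields the sandwich inclusions $\prod_i B_H^{F_n^{(i)},\mathcal{P}_i}(x_i,\epsilon/\ell) \subset B_H^{F_n,\mathcal{P}}(x,\epsilon) \subset \prod_i B_H^{F_n^{(i)},\mathcal{P}_i}(x_i,\epsilon)$. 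The left inclusion produces the upper bound $N_H^\alpha(F_n,\epsilon,\mathcal{P}) \le \prod_i N_H^{\varphi_i}(F_n^{(i)},\epsilon/\ell,\mathcal{P}_i)$, hence $\overline h_\mu(\alpha,\mathcal{P}) \le \sum_i (k_i/N) h_i$. To pass from product partitions to the supremum over all finite partitions, I would work with generating sequences of product partitions $\xi_m = \xi_m^{(1)} \times \cdots \times \xi_m^{(\ell)}$ (assembled from generating sequences for each $\varphi_i$) and invoke Proposition~\ref{prop:generatingSequence}.

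The main obstacle will be the metric lower bound: naive trivial extensions of a single $\mathcal{P}_i$ to the product (for which $d_H^{\tilde{\mathcal{P}}_i}(x,y) = d_H^{F_n^{(i)},\mathcal{P}_i}(x_i,y_i)$) only yield $h_\mu(\alpha) \ge (k_i/N)h_i$ and hence the \emph{maximum} rather than the sum. To reach the sum I would combine all $\ell$ partitions at once and use the right-hand sandwich inclusion to bound the ball measure $\mu(B_H^{F_n,\mathcal{P}}(x,\epsilon)) \le \prod_i \mu_i(B_H^{F_n^{(i)},\mathcal{P}_i}(x_i,\epsilon))$; if in each factor one controls the pointwise ball measure on a set of near-full measure by $\mu_i(B_H^{(i)}(x_i,\epsilon)) \lesssim \Leb(F_n^{(i)})^{-h_i+\delta}$, then the product bound becomes $\mu(B_H^{\mathcal{P}}(x,\epsilon)) \lesssim \Leb(F_n)^{-\sum_i (k_i/N) h_i + \delta'}$, forcing any cover of that set to use at least $\Leb(F_n)^{\sum_i (k_i/N) h_i - \delta'}$ Hamming $\epsilon$-balls. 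The technical crux is therefore upgrading the defining statement about $N_H$ to a pointwise ball-measure estimate---a Shannon--McMillan-type assertion for polynomial slow entropy. In the paper's unipotent homogeneous setting this upgrade is essentially automatic from Proposition~\ref{prop:BowenEstimate} together with the Hamming--Bowen equivalence of Lemmas~\ref{lem:connectionOfHammingAndBowen} and~\ref{lem:connectionOfHammingAndBowenNonCompact}, while in the general setting it needs to be verified by a separate ergodic-theoretic argument applied to each factor.
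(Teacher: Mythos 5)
Your plan coincides with the paper's in all the structural moves: the max-norm factorization of the F{\o}lner cube, the exact product decomposition of Bowen balls for the topological case, and a Fubini observation for Hamming balls. Your multiplicative identity $1 - d_H^{F_n,\mathcal{P}}(x,y) = \prod_i\bigl(1 - d_H^{F_n^{(i)},\mathcal{P}_i}(x_i,y_i)\bigr)$ is a cleaner algebraic packaging of exactly the two inclusions the paper derives by Fubini, and your resulting sandwich $\prod_i B_H^{F_n^{(i)},\mathcal{P}_i}(x_i,\epsilon/\ell) \subset B_H^{F_n,\mathcal{P}}(x,\epsilon) \subset \prod_i B_H^{F_n^{(i)},\mathcal{P}_i}(x_i,\epsilon)$ is correct (the paper obtains the same with constants $\epsilon$ and $\ell\epsilon$). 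The topological argument and the metric upper bound are both right and match the paper.

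Where you are right to be uneasy is the metric lower bound, and the paper is in fact terse there (``we get the result analogously to the topological case''). The containment $B_H^{F_n,\mathcal{P}}(x,\epsilon) \subset \prod_i B_H^{F_n^{(i)},\mathcal{P}_i}(x_i,\epsilon)$ combined with projecting a near-full-measure cover onto a single coordinate only yields $N_H^{\alpha} \ge \max_i N_H^{\varphi_i}$, not the product, so something more is genuinely required. However, your proposed remedy --- a pointwise Shannon--McMillan-type bound $\mu_i(B_H^{(i)}(x_i,\epsilon)) \lesssim \Leb(F_n^{(i)})^{-h_i+\delta}$ on a large-measure set --- is stronger than what is needed, and as a blanket assertion about arbitrary measure-preserving $\mathbb{R}^{k_i}$-actions it would itself need a separate proof. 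The ``analogy with the topological case'' the paper has in mind runs through separated sets, not ball measures. Set $S_H(F_n,\epsilon,\mathcal{P}) = \min\{P_H(\epsilon,Y) : \mu(Y) > 1-\epsilon\}$, where $P_H(\epsilon,Y)$ is the maximal cardinality of an $\epsilon$-Hamming-separated subset of $Y$. The usual packing--covering duality gives $S_H(2\epsilon) \le N_H(\epsilon) \le S_H(\epsilon)$, so $S_H$ computes the same slow entropy, and $S_H$ is supermultiplicative under products by a Fubini/grid argument: given $W$ with $\mu(W) > 1-\epsilon$, take $Y_1 \subset X_1$ with $\mu_1(Y_1) > 1-\sqrt{\epsilon}$ so that every slice $W_{x_1}$, $x_1 \in Y_1$, has measure $> 1-\sqrt{\epsilon}$; take an $\epsilon$-Hamming-packing $\{y_j\}$ of $Y_1$ and, for each $j$, an $\epsilon$-Hamming-packing $\{z_{j,m}\}$ of the slice $W_{y_j}$. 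The points $(y_j,z_{j,m})$ all lie in $W$, and your multiplicative identity (which gives $d_H^{F_n,\mathcal{P}} \ge d_H^{F_n^{(i)},\mathcal{P}_i}$ for every $i$) shows they are pairwise $\epsilon$-Hamming-separated in the product, so $P_H^\alpha(\epsilon,W) \ge S_1 S_2$. Iterating over $\ell$ closes the lower bound with no pointwise ball-measure estimate, which is the more elementary and fully general resolution of the gap you identified.
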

\begin{proof}
		Let us start with the topological polynomial slow entropy. Let $F_n=[-n,n]^N$. Then $y\in \Bow^{F_N}(x,\epsilon)$ is equivalent to $\tilde{d}(s\cdot x, s\cdot y)<\epsilon$ for every $s\in [-n,n]^N$ which is equivalent to $d_i((\varphi_i)_tx_i,(\varphi_i)_ty_i)<\epsilon$ for every $i$ and $t\in[-n,n]^{k_i}$, where $\tilde{d}=\prod_i d_i$. So the number of Bowen balls for $\alpha$ is the product of the number of Bowen balls for each $\varphi_i$ and so it becomes a sum after taking $\log$. This finishes the proof in the topological category.
		
		Let $\mathcal{P}_i$ be a generating partition for $\varphi_i$. Then $\mathcal{P}=\prod_i \mathcal{P}_i$ is generating for $\alpha$. Let $F_n=[-n,n]^N$. Then $t\cdot x=(t_1,\ldots,t_{\ell}) \cdot x$ and $t\cdot y$ are in one atom of $\mathcal{P}$ if and only if $(\varphi_i)_{t_i}x_i$ and $(\varphi_i)_{t_i}y_i$ are in one atom of $\mathcal{P}_i$ for every $i\leq \ell$. It follows by Fubini's theorem that if $y\in B_H^{F_n,\mathcal{P}}(x,\epsilon)$, then for every $i\leq \ell $, $y_i\in B_H^{[-n,n]^{k_i},\mathcal{P}_i}(x_i,\epsilon)$. Conversely, by Fubini's theorem if $y_i\in B_H^{[-n,n]^{k_i},\mathcal{P}_i}(x_i,\epsilon)$ for $i\leq \ell$, then $y\in B_H^{F_n,\mathcal{P}}(x,\ell \epsilon)$. Since $\ell \epsilon\to 0$ as $\epsilon\to 0$, we get the result analogously to the topological case.
\end{proof}

\begin{remark}
Indeed Proposition \ref{prop:weightAverage} also works for general slow entropy normalized by volume if the scaling functions satisfies $a_T(\chi_1+\chi_2)=a_T(\chi_1)a_T(\chi_2)$ and $a_{T^k}(\chi)=a_T(k\chi)$. For more details we refer to Proposition 4.1.8 in \cite{KanigowskiKatokWei}.
\end{remark}

\section{Criterion of total vanishing of slow entropy}\label{sec:entropyVanishingCriterion}
In this section, we generalize Ferenczi's (\cite{Ferenczi}) criterion of total vanishing of metric slow entropy to arbitrary  probability measure-preserving $\mathbb{R}^k$-actions of a standard probability space and also the vanishing criterion for topological entropy in \cite{KanigowskiVinhageWei} to any free $\mathbb{R}^k$-action that acts by uniformly continuous minimal homeomorphisms of a metric spaces.

Before we state our propositions, let's generalize the slow entropy of an $\mathbb{R}^k$-action with respect to any nondecreasing function $f:\mathbb{R}^+\to\mathbb{R}^+$ with $\lim_{x\to+\infty}f(x)=+\infty$ by replacing $\log$ in Definition \ref{def:topologicalSlow}'s denominator and Definition \ref{def:metricSlow}'s denominator by $f$ and denote them as $h_{f,F_n}^{\operatorname{top}}(\alpha)$ and $h_{f,F_n}^{\mu}(\alpha)$ respectively. Then we can state vanishing propositions for abelian actions as follows:
\begin{proposition}\label{prop:metricVanishing}
Let $(X,\mu)$ be a standard probability space, $F_n$ be a norm-induced F{\o}lner sequence and $\alpha:\mathbb{R}^k\curvearrowright(X,\mu)$ be an action of $\mathbb{R}^k$ by measure-preserving transformations. Then $h_{f,F_n}^{\mu}(\alpha)=0$ for any nondecreasing function $f:\mathbb{R}^+\to\mathbb{R}^+$ with $\lim_{x\to+\infty}f(x)=+\infty$ if and only if $\alpha$ is measurably conjugate to an $\mathbb{R}^k$ action by translations on a compact abelian group.
\end{proposition}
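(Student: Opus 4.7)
The plan is to adapt Ferenczi's rank-one argument from \cite{Ferenczi} to the $\R^k$-setting via the multiparameter Halmos--von Neumann structure theorem.

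For the forward direction, suppose $\alpha$ is measurably conjugate to the translation action $T_a(x) = x + \rho(a)$ on a compact abelian group $K$ endowed with Haar measure, for some continuous homomorphism $\rho : \R^k \to K$. This action is equicontinuous. Given a finite partition $\mathcal{P}$ and $\epsilon > 0$, approximate $\mathcal{P}$ in measure by a partition whose atoms have boundaries of arbitrarily small Haar measure; for such a partition, the $F_n$-coding $\gamma_{\mathcal{P}}(\cdot, x)$ is determined up to $\epsilon$-Hamming error by which element of a fixed finite quotient of $K$ contains $x$. Hence $N_H(F_n, \epsilon, \mathcal{P})$ is bounded uniformly in $n$, so $h^\mu_{f, F_n}(\alpha) = 0$ for every admissible $f$.

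For the reverse direction, the first step is to reformulate the hypothesis: vanishing of $h^\mu_{f, F_n}(\alpha)$ for all admissible $f$ is equivalent to the uniform bound $\sup_n N_H(F_n, \epsilon, \mathcal{P}) < \infty$ for every pair $(\epsilon, \mathcal{P})$. Indeed, if $\log N_H(F_{n_k}, \epsilon_0, \mathcal{P}_0) \to \infty$ along a subsequence $n_k$, one may interpolate a nondecreasing $f \to \infty$ satisfying $f(\Leb(F_{n_k})) = o(\log N_H(F_{n_k}, \epsilon_0, \mathcal{P}_0))$, contradicting vanishing at scale $f$. Next, from uniform Hamming-boundedness one deduces that the Koopman representation $U_a f = f \circ \alpha_a$ on $L^2(X, \mu)$ has purely discrete spectrum: the finitely many Hamming-$\epsilon$-types in $X$ yield a decomposition of $X$ into almost-invariant pieces whose indicators generate a finite-dimensional $U$-subrepresentation, and a diagonal extraction over refinements of $\mathcal{P}$ and $\epsilon \to 0$ produces an $L^2$-dense set of eigenvectors of $U$; equivalently, every orbit $\{U_a f : a \in \R^k\}$ is precompact in $L^2$.

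Finally, the multiparameter Halmos--von Neumann theorem---standard in the ergodic case and extended to the general case via the ergodic decomposition together with a gluing argument---asserts that a measure-preserving $\R^k$-action with purely discrete spectrum is measurably conjugate to a translation action on a compact abelian group whose Pontryagin dual is the eigenvalue group of $U$, completing the proof. The main obstacle is the passage from uniform Hamming-boundedness to $L^2$-precompactness of Koopman orbits: Hamming complexity measures spatial disagreement between two points over a common time window, while $L^2$-Koopman complexity measures analytic disagreement of translates of a single function. Bridging the two requires an averaging argument exploiting amenability of $\R^k$ and the F{\o}lner property of the norm-induced $F_n$, together with careful control of the boundary effect in higher-rank F{\o}lner averages.
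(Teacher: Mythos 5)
Your proof takes a genuinely different route from the paper's. The paper's proof of the reverse direction is a reduction to the rank-one case: it asserts that if the $\R^k$-action has zero metric slow entropy at every scale then so does each one-parameter generator $\alpha_i$, and then invokes Ferenczi's Proposition~3 for each $\alpha_i$; the conclusion for the joint action follows because commuting measure-preserving transformations with discrete spectrum have joint discrete spectrum. You instead attempt to reprove Ferenczi's criterion directly for $\R^k$-actions, via the chain: total vanishing $\Leftrightarrow$ uniform Hamming boundedness $\Rightarrow$ purely discrete Koopman spectrum $\Rightarrow$ conjugacy to a compact group translation by the multiparameter Halmos--von Neumann theorem. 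Your route, if completed, would be more self-contained and would avoid the paper's reduction step, which is itself not entirely immediate since the box-Hamming distance for $\alpha$ over $[-n,n]^k$ and the interval-Hamming distance for a generator over $[-n,n]$ average over sets of different dimension; the paper's approach, on the other hand, lets one cite Ferenczi off the shelf rather than redoing his analysis.

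However, your argument has a genuine gap at the step you yourself flag as ``the main obstacle'': passing from uniform Hamming boundedness to $L^2$-precompactness of the Koopman orbits $\{U_a f : a\in\R^k\}$. This is the technical heart of the criterion---it is the nontrivial part of Ferenczi's argument even in rank one---and you do not carry it out. The higher-rank version requires one to control F{\o}lner boundary effects in the box averages, to show that the finitely many Hamming $\epsilon$-types with respect to $(\mathcal P,F_n)$ produce a finite partition whose atoms are uniformly almost-invariant under translation by a large range of $a\in\R^k$, and then to run a diagonal extraction over refinements of $\mathcal P$ and over $\epsilon\to 0$ to obtain a total set of $L^2$-eigenvectors. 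None of these steps is supplied; the outline simply records that they are needed. Your forward direction and the reformulation of total vanishing as $\sup_n N_H(F_n,\epsilon,\mathcal P)<\infty$ for all $(\epsilon,\mathcal P)$ are both sound, but until the passage to discrete spectrum is made rigorous, the reverse implication remains unestablished. To complete your route, you would need to generalize the averaging argument in Ferenczi's Proposition~3 to the $\R^k$ F{\o}lner setting; alternatively, you could adopt the paper's reduction, but then you would need to justify that each one-parameter generator inherits zero slow entropy at all scales.
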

\begin{proof}
The proof of this proposition essentially follows from Proposition $3$ in \cite{Ferenczi}. Notice that if the abelian action $\alpha$ has zero metric slow entropy at all scales, then there is a family $\{\alpha_1,\ldots,\alpha_k\}$ of generators of $\alpha$ with each $\alpha_i$ also having zero slow entropy at all scales. Hence, \cite[Proposition 3]{Ferenczi} implies that all of them are conjugate to some translations on a compact abelian group and thus we obtain the proof for one direction. The other direction directly follows from the fact that translations provide no orbit growth and thus we finish the proof of the proposition.
\end{proof}

\begin{proposition}\label{prop:topologicalVanishing}
Let $(X,d)$ be a compact metric space, $F_n$ be a norm-induced F{\o}lner sequence and $\alpha:\mathbb{R}^k\curvearrowright(X,d)$ be an action by uniformly continuous homeomorphisms of $(X,d)$.  Then $h_{f,F_n}^{\operatorname{top}}(\alpha)=0$ for any nondecreasing function $f:\mathbb{R}^+\to\mathbb{R}^+$ with $\lim_{x\to+\infty}f(x)=+\infty$ if and only if $\alpha$ is topologically conjugate to an $\mathbb{R}^k$ action by translations on a compact abelian group.
\end{proposition}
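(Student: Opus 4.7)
The plan is to characterize the vanishing of slow entropy at all scaling functions by equicontinuity of $\alpha$, and then invoke classical Ellis-style structure theory. Throughout, write $s_n(\epsilon) := S_B(F_n,\epsilon,X)$ and $d_B^\infty(x,y) := \sup_{t\in\R^k} d(t\cdot x, t\cdot y)$. The implication $\Leftarrow$ is immediate: if $\alpha$ is conjugate to translations on a compact abelian group equipped with an invariant metric, then every $\alpha(t)$ is an isometry, so $d_B^A = d$ for any $A \subset \R^k$ and $s_n(\epsilon) \le N_d(\epsilon) < \infty$ uniformly in $n$; hence $\log s_n(\epsilon)/f(\Leb(F_n)) \to 0$ for any nondecreasing $f\to\infty$.

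For the nontrivial direction $\Rightarrow$, I would proceed in three stages. \emph{Stage 1: upgrade vanishing-at-all-scales to $\sup_n s_n(\epsilon) < \infty$ for every $\epsilon > 0$.} If instead $s_{n_k}(\epsilon_0) \to \infty$ along some subsequence for some $\epsilon_0 > 0$, I would define a nondecreasing function $f:\R^+\to\R^+$ with $f(\Leb(F_{n_k})) = \sqrt{\log s_{n_k}(\epsilon_0)}$, interpolated monotonically in between; then $f \to \infty$ but
\[ \frac{\log s_{n_k}(\epsilon_0)}{f(\Leb(F_{n_k}))} = \sqrt{\log s_{n_k}(\epsilon_0)} \to \infty, \]
forcing $\overline{h}^{\operatorname{top}}_{f,F_n}(\alpha) = +\infty$ and contradicting the hypothesis. \emph{Stage 2: uniform boundedness of the $s_n(\epsilon)$ implies that $\alpha$ is equicontinuous.} The key input is completeness of $(X, d_B^\infty)$: a $d_B^\infty$-Cauchy sequence $(y_n)$ produces a uniformly Cauchy family of orbit maps $t \mapsto t\cdot y_n$ in $C_b(\R^k, X)$, whose uniform limit is itself an orbit map $t \mapsto t \cdot y$ by uniform continuity of $\alpha$. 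If $\alpha$ were not equicontinuous, the uniformities of $d$ and $d_B^\infty$ on $X$ would differ, so $(X,d_B^\infty)$ could not be totally bounded --- otherwise, being complete and totally bounded it would be compact, the continuous identity $(X, d_B^\infty) \to (X, d)$ would be a homeomorphism of compact Hausdorff spaces, and the two compatible uniformities would coincide by uniqueness of uniform structures on such spaces. So one can extract an infinite $d_B^\infty$-$\epsilon$-separated family $\{z_i\}$; for any $N$, the finitely many separation witnesses $t_{ij} \in \R^k$ ($i<j\le N$) lie in a compact set $T_N \subset F_n$ for $n$ large, giving $s_n(\epsilon) \ge N$ and contradicting Stage 1.

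\emph{Stage 3: equicontinuity yields conjugacy to translations on a compact abelian group.} By Arzel\`a--Ascoli, $G := \overline{\{\alpha(t) : t \in \R^k\}}$ (closure in $\mathrm{Homeo}(X)$ under the sup metric) is a compact topological group, abelian because $\R^k$ is abelian and dense in $G$. Under the minimality assumption stated in the introductory paragraph preceding the proposition, the $G$-action on $X$ is transitive, so $X \cong G/H$ for the (closed, normal) stabilizer $H$ of a basepoint; $G/H$ is thus a compact abelian group, and the $\R^k$-action on it is by translation via the composition $\R^k \to G \to G/H$.

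The main obstacle will be Stage 2: identifying uniform boundedness of the Bowen-separated numbers with equicontinuity of $\alpha$, through completeness of $d_B^\infty$ and the compact-Hausdorff homeomorphism argument. Stage 1 is a short calibration on scaling functions in the spirit of Ferenczi's argument for the measurable analogue, and Stage 3 is classical Ellis--Gottschalk--Hedlund structure theory for equicontinuous minimal actions.
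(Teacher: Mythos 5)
Your proof is correct (with the understanding, which you explicitly flag, that the hypotheses of minimality and freeness stated in the appendix's preamble are in force, even though they are omitted from the proposition's formal statement), but it takes a genuinely different route from the paper's. The paper disposes of the proposition in one line by reducing to the rank-one case: mimicking the proof of the preceding measurable proposition, one observes that vanishing of $h_{f,F_n}^{\operatorname{top}}(\alpha)$ at all scales forces each one-parameter generator of $\alpha$ to have vanishing slow entropy at all scales, and then one invokes Proposition~A.2 of \cite{KanigowskiVinhageWei} for each generator; what remains implicit is the (true but non-trivial) step that each generator being topologically conjugate to a rotation implies the \emph{joint} $\R^k$-action is conjugate to a translation action, which requires passing through equicontinuity of the joint action and Ellis structure theory anyway. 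Your argument instead proves the $\R^k$-statement from scratch: the scaling-function calibration in Stage~1, the completeness of $(X,d_B^\infty)$ plus the compact-Hausdorff uniformity argument in Stage~2, and the Arzel\`a--Ascoli/Ellis closure in Stage~3 together constitute essentially a direct higher-rank rendition of the rank-one Proposition~A.2 that the paper cites. What your approach buys is self-containedness and transparency --- one sees exactly where equicontinuity enters and why the Bowen metric $d_B^\infty$ controls the structure --- and it avoids the slight gap in the paper's reduction where one must still argue that generatorwise conjugacy to rotations globalizes; what the paper's approach buys is brevity and modularity, at the cost of leaving that globalization step to the reader.
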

\begin{proof}
The proof of this proposition is almost identical with the previous one if we replace Proposition $3$ in \cite{Ferenczi} by Proposition A.2 in \cite{KanigowskiVinhageWei}.
\end{proof}

\end{document}